\def\spine{1.1in}
\def\Lim#1{\underset{#1}{\text{\rm Lim\,}}}
\def\B#1#2{B_{#1}(#2)}
\newcommand{\m}{\mathcal{M}}
\def\P{\mathcal{P}}
\def\leq{\leqslant}
\def\geq{\geqslant}
\newcommand{\bs}[1]{\boldsymbol{#1}} 
\def\cf{c_{{}_{\rm Fro}} }
\def\weakto{\stackrel{*}{\scalebox{1.8}[1.0]{$ \rightharpoonup $}} } 
\def\sli{\sum\limits}
\def\ili{\int\limits}
\def\li{\liminf_{N\to \infty} \frac{\P_s^*(A, N)}{N^{s/d}}}
\def\ls{\limsup_{N\to \infty} \frac{\P_s^*(A, N)}{N^{s/d}}}
\def\R{\mathbb{R}}
\def\ep{\varepsilon}
\def\phi{\varphi}
\def\epsilon{\varepsilon}
\def\H{\mathcal{H}}
\def\h{\mathcal{H}}
\def\f{{F}}
\def\t{\mathfrak{t}}
\def\L{\mathcal{H}}
\def\sgn{\text{sgn\,}}
\newcommand{\diam}{\operatorname{diam}}
\newcommand{\dist}{\operatorname{dist}}
\newcommand{\PP}{\mathcal{P}}
\newcommand{\wt}{\widetilde}
\newtheorem{theorem}{Theorem}
\newtheorem{lemma}[theorem]{Lemma}
\theoremstyle{definition} 
\newtheorem{corollary}[theorem]{Corollary}
\newtheorem*{example*}{Example}
\newtheorem{defin}[theorem]{Definition}
\newtheorem{zamech}[theorem]{Remark}
\newtheorem{remark}[theorem]{Remark}
\newenvironment{customthm}[1]
{\innercustomthm}
{\endinnercustomthm}
\renewenvironment{proof}[1][\proofname]{\par
  \pushQED{\qed}%
  \normalfont \topsep6\p@\@plus6\p@\relax
  \trivlist
  \item[\hskip\labelsep
        \itshape
    \textbf{\textit{#1}}\@addpunct{.}]\ignorespaces
}{%
  \popQED\endtrivlist\@endpefalse
}
\providecommand{\proofname}{Proof}
\author{A. Anderson
    \and
    A. Reznikov
    \and
    O. Vlasiuk
    \and
    E. White
}
\begin{document}
\title{Polarization and covering on sets of low smoothness} 

\date{\today}
\maketitle 
\begin{abstract}
    In this paper we study the asymptotic properties of point configurations that achieve optimal covering of sets lacking smoothness.  Our results include the proofs of existence of asymptotics of best covering and maximal polarization for $ (\h_d,d) $-rectifiable sets and maximal polarization on self-similar fractals.
\end{abstract}

\vspace{4mm}

{\footnotesize\noindent\textbf{Keywords}: Best-covering points, optimal polarization, renewal theory, Riesz potentials}
\vspace{2mm}

\noindent\textbf{Mathematics Subject Classification:} Primary, 31C20, 28A78. Secondary, 52A40


\section{Introduction}
The question of covering a compact set by metric balls or, more generally, by convex bodies is a classical problem in metric geometry, and has multiple important applications. In this paper we focus on covering a set by balls of small radius $\ep$, and deduce the asymptotic behavior for the number of balls needed to cover a given compact set when $\ep \to 0$. Our results allow general norms, so the corresponding balls can in general be rescaled copies of a given bounded convex absorbing balanced set.

The theory of covering by translations of a fixed convex set was developed by Rogers, \cite{Rogersbook, RogersZong}. The early investigation of the asymptotic properties of the optimal covering of compact sets was undertaken in a paper by Kolmogorov \cite{Kolmogorov1956} and subsequently expanded in his joint paper with Tikhomirov \cite{kolmogorovEentropy1959}. Inspired by the work of Shannon in information theory, Hausdorff measures of fractional dimension, and following an earlier study by Pontryagin and Shnirelman~\cite{pontrjaginPropriete1932} on the metric notion of dimension, Kolmogorov defined the so-called $ \epsilon $-entropy of a compact set $ A $ as
\[
    \log_2 N_\epsilon (A),
\]
where $ N_\epsilon(A) $ is the smallest cardinality of an $ \epsilon $-covering of $ A $:
\[
    A\subset \bigcup_{U\in \gamma} U,
\]
with each set $ U\in \gamma $ having a diameter at most $ \ep $. The quantity $ N_\epsilon(A) $ can be understood as the smallest number of points in a discrete quantization of the set $ A $, if the admissible error must be bounded by $ \epsilon $. Accordingly, $ \log_2 N_\epsilon (A) $ is then the ``quantity of information'', measured in bits, contained in this quantization. The aforementioned paper of Kolmogorov-Tikhomirov proceeds to discuss the asymptotic orders of growth of $ N_\epsilon (A) $ for $ \epsilon\downarrow 0 $ in the case of Jordan-measurable compact set $ A $; i.e., a measurable set $A\subset \R^d$ whose boundary has zero Lebesgue measure. They further study metric dimensions of the spaces of functions with finite smoothness and analytical functions.

An alternative approach to the optimal covering problem, and the one used in the sequel, consists in fixing the number of points $ N $ in the quantizer and finding the smallest $ \epsilon_N $, for which set $ A $ is contained in the $ \epsilon_N $-neighborhood of the quantizer. It can be viewed as the question of finding the best quantization of the set $ A $, with the maximal possible error used as objective function that is to be minimized.

Quantization as a part of information theory was actively developed by a number of researchers; the papers by Zador \cite{zadorAsymptotic1982} and Bucklew and Wise \cite{bucklewMultidimensional1982} established the existence of asymptotics on Jordan-measurable sets for a related functional: expected quantization error. The monograph of Graf and Luschgy \cite{grafFoundations2000} summarized these developments both for the asymptotics of quantization error, and for optimal covering. They also rediscovered the results of Kolmogorov and Tikhomirov about asymptotics of covering for Jordan-measurable sets. Graf and Luschgy then conjectured that the asymptotics for $ N \to \infty $ must exist for sets, more general than just Jordan-measurable. This fact will indeed be one of our main results, Theorem~\ref{thm:covering_asmpt}.

We note that the Jordan measurability assumption is in effect a smoothness condition for the set $A$. In this paper we study the asymptotic properties of point configurations that achieve optimal covering for embedded sets lacking such smoothness properties.  Our results include the proofs of existence of asymptotics of best covering for $ (\h_d,d)  $-rectifiable sets.
We also show existence of the asymptotics for the polarization $ \mathcal P_s^* $ with hypersingular Riesz kernel on $ (\h_d,d)  $-rectifiable sets, as well as the necessary and sufficient condition on fractal contraction ratios, under which the polarization asymptotics exist on self-similar fractals satisfying the open set condition. Analogous results for covering on fractals were obtained by Lalley~\cite{lalleyPacking1988}.

The study of covering properties of various functional spaces, initiated by Kolmogorov and Tikhomirov, was continued in various settings by a number of authors, including Birman-Solomyak, Bourgain-Pajor-Szarek-Tomczak-Jaegermann, Posner-Rodemich-Rumsey, Temlyakov, and others \cite{birmanPiecewisepolynomial1967,bourgainDuality1989,posnerEpsilon1967,temlyakovEntropy2017}. Notably, it has been observed that metric entropy is related to the small ball problem \cite{kuelbsMetric1993}. In most cases, the primary goal of these works is to establish the order of growth of the metric entropy for a certain set; in the Euclidean space however we can obtain the existence of a constant in the asymptotics, depending only on the Hausdorff dimension of the set when the dimension is integer. For fractal sets we have the existence of the constant as well, but it generally depends on the set. 

Recently, a general approach for studying first-order asymptotics of interaction functionals has been developed \cite{hardinAsymptotic2021}. It transpires that both the existence of asymptotic and weak$ ^* $-distribution of the minimizers of a functional follows from its so-called short-range properties (which will be further discussed in Section~\ref{sec:proofs_recti}). Intuitively, if the contribution of pairs of nearby points in $ \omega_N $ dominates over the contributions of remote pairs, there necessarily exist asymptotics with respect to a polynomial rate function. In this paper we show that this approach applies to the problems of optimal covering and polarization, and employ it as one of our main tools. 
In a remarkable coincidence, the two papers addressing this phenomenon from the opposite sides: one due to Gruber, on the asymptotics for optimal quantization with general kernels, another due to Hardin and Saff, on the asymptotics of hypersingular Riesz energy, appeared in {\it Advances in Mathematics} within several months of each other \cite{gruberOptimum2004, hardinMinimal2005}. 

While an earlier version of this manuscript was in preparation, we learned that Cohn and Salmon obtained similar results about the asymptotics of covering on $ (\h_d,d) $-rectifiable sets simultaneously \cite{cohnSphere2021}. It is worth noting that their approach is, like ours, based on the fact that the covering functional is a short-range interaction; they also study asymptotics of sandwich functions on graphs using some of the short-range interaction ideas. This shows how short-range interactions can be the right tool for computing asymptotics in a number of contexts.

\section{Preliminaries}

As explained above, we shall be interested in the asymptotics of $ N $-point {\it best covering} of the compact set $ A \subset \mathbb R^p $, defined as
\[
    \rho^*(A,N) = \inf_{\omega_N\subset \mathbb R^p} \sup_{y\in A} \dist (y,\omega_N)
\]
where the distance between a point and a multiset $\omega_N=\{x_1, \ldots, x_N\}$ (note, the possibility $ x_i=x_j $ is allowed) is defined naturally as $\dist (y,\omega_N) = \min_{x\in\omega_N} \|x-y \| $. Here and in the following we write $ \|\cdot \| $ for a fixed norm in $ \mathbb R^p $ (in particular, not necessarily Euclidean). We shall study the behavior of this quantity for $ N\to \infty $. A related quantity is the $ N $-point {\it constrained best covering}:
\[
    \rho(A,N) = \inf_{\omega_N\subset A} \sup_{y\in A} \dist (y,\omega_N),
\]
in which optimization over the configuration occurs on subsets of $ A $ only. Notice that due to compactness of $ A $, $ \inf $ and $ \sup $ can be replaced with $ \min $ and $ \max $ respectively; we shall still need the above definitions for non-compact input sets.

Following \cite{BorodTAMS}, we define the problem of {maximal polarization} with Riesz kernels. Given $ \omega_N $, $ A\subset \mathbb R^p $ as above, let 
$$P_s(\omega_N,A) = \inf_{y\in A} \sum_{x\in\omega_N} \| y- x\|^{-s},$$
the {\it Riesz s-polarization} achieved by configuration $ \omega_N $ on $ A $. {\it Maximal unconstrained polarization} for $ A $ with cardinality $ N $ is then given by
\[
    \mathcal P_s^* (A,N) = \sup_{\omega_N\subset \mathbb R^p} P_s(\omega_N,A).
\]
Similarly to the covering radius case, we may want the configuration $\omega_N$ to stay in the set $A$. In this case, we introduce the {\it maximal constrained polarization} as
\[
    \mathcal P_s (A,N) = \sup_{\omega_N\subset A} P_s(\omega_N,A).
\]
The above quantity is closely related to the covering radius of the set $A$, but has a somewhat less local structure. 
The study of the maximal polarization was initially motivated by the notion of Chebyshev constant in polynomial approximation theory \cite{erdelyiRiesz2013}, and has since been discussed in a number of papers \cite{BorodTAMS, reznikovCovering2018, hardinUnconstrained2020}.

\section{Notation}
Before we proceed to formulate the main results, let us introduce some notation. We consider a fixed norm $ \|\cdot \| $, not necessarily Euclidean, on the ambient spaces $ \mathbb R^d $ and $ \mathbb R^p $, $ p\geq d $. With respect to this norm, we define 
$ \mathcal H_d $, the $ d $-dimensional Hausdorff  measure, normalized so that $ \mathcal H_d([0,1]^d) = 1 $; we then write $ v_d = \L_d(\B{ 1}{\bs 0}) $ for the volume of the $ d $-dimensional unit ball. The $ d $-dimensional Minkowski content is denoted by $ \mathcal M_d $.

Let $ \B{r}{x} $ stand for the closed ball of radius $ r > 0 $ centered at $ x \in \mathbb R^p $, with respect to the norm $ \|\cdot\| $. Similarly, $ \B{r}{A} = \bigcup_{x\in A} \B{r}{x} $ for $ r>0 $ and a compact set $ A $. As discussed in the introduction, we are interested in the quantity
$$ R(\omega_N,A) =\sup_{y\in A} \text{dist}(y,\omega_N), $$
which denotes the smallest radius $ r $ for which $ A\subset B_r(\omega_N) $.

\section{Main results on rectifiable sets}
In this section we establish the existence of the asymptotics of best covering and optimal polarization on compact subsets of $ \mathbb R^d $ of positive $ \mathcal H_d $-measure. This strengthens the previously known asymptotic results for covering and polarization \cite{kolmogorovEentropy1959, hardinUnconstrained2020}.

Recall that for Jordan-measurable sets in $ \mathbb R^d $, the asymptotics of best covering is known to exist and depend only on the measure of the covered set and the dimension $ d $. More precisely, the following result was known to hold.
\begin{customthm}{A}[Kolmogorov-Tikhomirov \cite{kolmogorovEentropy1959}; Graf-Luschgy \cite{grafFoundations2000}]
    \label{thm:graf_luschgy}
    Given a nonempty compact $ A \subset \mathbb R^d $ with $ \L_d(\partial A) = 0 $, one has
    \[
        \lim_{N\to \infty} N^{1/d}\rho^*(A,N) = {\theta_{d}}{\mathcal H_d(A)^{1/d}},
    \]
    with the finite positive constant $ \theta_d $ depending only on the dimension $ d $ and the norm chosen in $ \mathbb R^d $.
\end{customthm}
Concerning maximal discrete polarization on Jordan sets, the following results were obtained for the constrained and unconstrained polarization. 
\begin{customthm}{B}[Borodachov-Hardin-Reznikov-Saff\cite{BorodTAMS}; Hardin-Saff-Petrache\cite{hardinUnconstrained2020}]
    \label{thm:jordanpolar}
Given a nonempty compact $ A \subset \mathbb R^d $ with $ \L_d(\partial A) = 0 $, one has for every $s>d$,
$$
\lim_{N\to\infty} \frac{\PP_s(A, N)}{N^{s/d}} = \lim_{N\to\infty} \frac{\PP^*_s(A, N)}{N^{s/d}} = \frac{\sigma_{s,d}}{\L_d(A)^{s/d}},
$$
where the finite positive constant $\sigma_{s,d}$ depends on $ s $, $ d $, and the used norm only.
\end{customthm}
\begin{remark}
    The results formulated in Theorem~\ref{thm:jordanpolar} in the above references were originally stated for the Euclidean distance, but the same proofs apply to general norms.
\end{remark}
We further remark that in \cite{BorodTAMS}, a similar theorem was proved for $C^1$-smooth embedded manifolds, and later in \cite{hardinUnconstrained2020} the assumption $\L_d(\partial A)=0$ was dropped. However, almost nothing was known for embedded sets of lower smoothness. To state our main results, we need some classical notions of smoothness from geometric measure theory.
\begin{defin}
A compact set $ A\subset \mathbb R^p $ is called {\it $ d $-rectifiable}, if $A=f(K)$ for a compact set $K\subset \R^d$ and a Lipschitz function $f\colon \R^d \to \R^p$. 
A set $ A \subset \mathbb R^p $ is said to be {\it $ (\h_d, d) $-rectifiable}, if $A$ can be written as
$$
A = \bigcup_{j=1}^\infty A_j \cup R,
$$
where each $A_j$ is $d$-rectifiable, $\H_d(R)=0$, and the union in the right-hand side is disjoint.
\end{defin}

In the next definition we introduce the notion of Minkowski content, which will be necessary to state the smoothness assumptions on the set $A$.
\begin{defin}
Recall that for a compact set $A\subset \R^p$, we use the following notation for the $ r $-neighborhood of $ A $:
\begin{equation}\label{eq:Aofepsilon}
\B{ r}{A}:=\{x\in \R^p: \textup{dist}(x, A)\leq  r\}
\end{equation}
Furthermore, for every $d\in (0, p]$ we define
$$
\underline{\mathcal M}_d(A):=\liminf_{ r\downarrow 0} \frac{\L_p(\B{ r}{A})}{v_{p-d} r^{p-d}}, \quad \overline{\mathcal M}_d(A):=\limsup_{ r\downarrow 0} \frac{\L_p(\B r A)}{v_{p-d} r^{p-d}}.
$$
When these limits are equal, their common value will be denoted $\mathcal{M}_d(A)$, the {\it Minkowski content of set $ A $}.
\end{defin}

Existence of $ \m_d(A) $ is equivalent to the existence of box-counting dimension of set $A$. The equality $\mathcal{M}_d(A)=\h_d(A)$ holds for closed subsets of $ \mathbb R^d $ as well as for reasonably good subsets of $ \mathbb R^p $; it will be a part of our smoothness assumptions in the sequel. 
With that in mind, we are ready to state our first main theorems, which significantly generalize results from \cite{BorodTAMS} and \cite{hardinUnconstrained2020}. We will formulate them separately for polarization and covering radius functionals.
\begin{theorem}
    \label{thm:covering_asmpt}
    For any compact $ (\mathcal \h_d,d)  $-rectifiable $ A \subset \mathbb R^p $ satisfying $ \h_d(A) = \m_d(A) $ one has
    \[
        \lim_{N\to\infty}N^{1/d} \rho(A, N)=\lim_{N\to \infty} N^{1/d}\rho^*(A,N) = {\theta_{d}}{\mathcal H_d(A)}^{1/d},
    \]
    with the constant $ \theta_d $ depending only on the dimension $ d $ and the norm chosen in $ \mathbb R^p $. In particular, the above holds for any compact set $ A\subset \mathbb R^d $.
\end{theorem}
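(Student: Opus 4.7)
The plan is to follow the short-range interaction framework outlined in the introduction. The covering functional $\rho^*(A,N)^d$ is approximately additive over disjoint compact pieces: if $A = A_1 \sqcup A_2$ with $\dist(A_1, A_2) > 0$, then for $N_1 + N_2 = N$ any near-optimal configuration for $A$ can be split into subconfigurations covering the $A_i$ separately, so that $\rho^*(A, N) \approx \max_i \rho^*(A_i, N_i)$. Balancing the allocation so that the two covering radii coincide yields
\[
    \lim_{N\to\infty} N \rho^*(A, N)^d \;=\; \lim_{N\to\infty} N\rho^*(A_1, N)^d + \lim_{N\to\infty} N\rho^*(A_2, N)^d,
\]
provided the individual limits exist, and the same identity holds for $\rho$. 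This reduces the theorem to establishing the asymptotic on a small family of ``elementary'' sets and then gluing.

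The base case is direct: by Theorem~A and rescaling, for any cube $Q \subset \R^d$ one has $\lim_{N\to\infty} N \rho^*(Q,N)^d = \theta_d^d\, \h_d(Q)$. I would extend this to Jordan-measurable subsets of affine $d$-planes embedded in $\R^p$ via a projection argument: the nearest-point projection onto such a plane is nonexpansive, so covering in $\R^p$ reduces to the $d$-dimensional problem on the plane, yielding the same constant $\theta_d$.

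For the general case, write $A = \bigcup_j A_j \cup R$ with each $A_j = f_j(K_j)$ Lipschitz and $\h_d(R) = 0$. By Rademacher and Lusin, after discarding an $\h_d$-null subset of each $K_j$ one can partition the remainder into finitely many compact pieces on which $f_j$ is uniformly close to an affine map $L_j^{(k)}$; the image $f_j(K_j^{(k)})$ then lies near the affine $d$-plane of $L_j^{(k)}$, where the asymptotic of the previous step applies. The area formula matches the sum of local Jacobian contributions to $\theta_d^d\, \h_d(A)$, while the hypothesis $\h_d(A) = \m_d(A)$ controls the residue $R$ together with the ``seams'' between patches: one needs the $r$-neighborhood of these lower-dimensional pieces to have $\L_p$-volume $o(r^{p-d})$ at scale $r = N^{-1/d}$, so that they can be covered by $o(N)$ extra points. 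The equivalence $\rho^*(A,N) \sim \rho(A,N)$ finally follows from projecting any near-optimal unconstrained configuration onto $A$; local near-flatness, supplied by the rectifiable structure, keeps the loss to a factor $1+o(1)$.

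The main obstacle will be the simultaneous control of the patching errors and of the $\h_d$-null residue $R$, since each of these sources of error could carry a nontrivial share of the covering budget and thereby corrupt the sharp constant $\theta_d$. The Minkowski content hypothesis is precisely what rules this out: without $\h_d(A) = \m_d(A)$, the $r$-neighborhood of a lower-dimensional ``skeleton'' inside $A$ could carry $\L_p$-volume of order $r^{p-d}$, forcing $\Theta(N)$ extra points into the count and breaking the additive accounting that drives the short-range argument.
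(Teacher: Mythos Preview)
Your short-range framework matches the paper's, but two of your technical choices diverge from the paper's execution and one of them carries a real gap.

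\textbf{Decomposition.} The paper does not pass through affine approximation via Rademacher/Lusin. Instead it invokes Federer's structure theorem to write $A$ (up to an $\h_d$-null set) as a finite disjoint union of images $\psi_m(K_m)$ with each $\psi_m$ bi-Lipschitz of constant $1+\epsilon$ from a compact $K_m\subset\R^d$. Bi-Lipschitz maps rescale covering radii by at most $1+\epsilon$, so the already-proved $p=d$ case transfers directly to each piece; no ``near a plane'' comparison is needed. Your affine-approximation route would require comparing the covering radius of a set lying \emph{near} a $d$-plane to that of its projection onto the plane, which is exactly the delicate step the bi-Lipschitz argument avoids. The paper also isolates the $p=d$ case first (via monotonicity against a Jordan-measurable superset and a separate \emph{stability} lemma controlling the loss when passing from $A$ to a large-measure subset), rather than attacking embedded sets directly.

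\textbf{Constrained covering.} Your argument for $\rho(A,N)\sim\rho^*(A,N)$ via nearest-point projection onto $A$ does not work as stated. A near-optimal unconstrained point $x_i$ is only guaranteed to satisfy $\dist(x_i,A)\leq R(\omega_N,A)$, so projecting can double the covering radius; ``local near-flatness'' cannot rescue this uniformly, since rectifiable sets need not be flat at every point and the covering must still handle the bad locus. The paper's mechanism is different and cleaner: the upper-bound construction (bi-Lipschitz images of optimal configurations in $K_m\subset\R^d$, plus a small packing-based cover of the residue drawn from $A$ itself) already produces configurations in $A$, so the constrained upper bound comes for free; the constrained lower bound is the trivial $\rho\geq\rho^*$.

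Finally, the role you assign to $\h_d(A)=\m_d(A)$ is correct in spirit, but the paper packages it into an explicit \emph{stability} estimate (their Lemma~11 and its corollary): for any $\epsilon$ there is $\delta$ such that removing a subset of $\h_d$-measure $<\delta\,\h_d(A)$ changes the normalized asymptotic by at most a factor $1+\epsilon$. This is the lemma doing the real work of absorbing the $\h_d$-null residue and the seams; your outline gestures at it but does not isolate it.
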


\begin{theorem}
    \label{thm:polar_asmpt}
    For any compact $ (\mathcal \h_d,d)  $-rectifiable set $ A \subset \mathbb R^p $ satisfying $ \h_d(A) = \m_d(A) $ one has, for $s>d$,
    \[
        \lim_{N\to \infty} \frac{\P_s(A,N)}{N^{s/d}} = \lim_{N\to \infty} \frac{\P_s^*(A, N)}{N^{s/d}}=\frac{\sigma_{s,d}}{\mathcal H_d(A)^{s/d}},
    \]
    with the constant $ \sigma_{s,d} $ depending only on the dimension $ d $, exponent $ s $, and the norm chosen in $ \mathbb R^p $. In particular, the above holds for any compact set $ A\subset \mathbb R^d $.
\end{theorem}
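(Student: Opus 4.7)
The plan is to establish the chain of inequalities
\[
    \frac{\sigma_{s,d}}{\h_d(A)^{s/d}} \leq \liminf_{N\to\infty} \frac{\PP_s(A,N)}{N^{s/d}} \leq \limsup_{N\to\infty} \frac{\PP_s^*(A,N)}{N^{s/d}} \leq \frac{\sigma_{s,d}}{\h_d(A)^{s/d}},
\]
which, together with the trivial monotonicity $\PP_s \leq \PP_s^*$, forces both limits to equal $\sigma_{s,d}/\h_d(A)^{s/d}$. The overall strategy mirrors that of Theorem~\ref{thm:covering_asmpt}: the Riesz polarization is a short-range functional when $s>d$, so only pairs $(y,x)$ with $\|y-x\|$ of order $N^{-1/d}$ contribute at leading order, and the global bound on $A$ reduces to local bounds on bi-Lipschitz images of Jordan subsets of $\R^d$, where Theorem~\ref{thm:jordanpolar} applies.

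For the lower bound on $\PP_s$, I would first exploit $(\h_d,d)$-rectifiability to partition $\h_d$-almost all of $A$ into finitely many pairwise disjoint Borel pieces $A_1,\dots,A_M$ of arbitrarily small diameter, each bi-Lipschitz equivalent to a Jordan set $E_j\subset \R^d$ via maps whose Lipschitz constants can be made arbitrarily close to $1$ as $\diam A_j\to 0$. Distributing the points as $N_j \approx N\h_d(A_j)/\h_d(A)$ and placing on each piece the image of a near-optimal $E_j$-polarizing configuration from Theorem~\ref{thm:jordanpolar}, for any test point $y\in A_{j_0}$ one obtains $\sum_{x\in \omega^{(j_0)}_{N_{j_0}}} \|y-x\|^{-s} \geq (\sigma_{s,d}-o(1))\,N_{j_0}^{s/d}/\h_d(A_{j_0})^{s/d}$. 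The proportional allocation renders this expression independent of $j_0$ to leading order, and the contributions from points on the other pieces only help the infimum.

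For the upper bound on $\PP_s^*$, given an arbitrary $\omega_N\subset \R^p$, I would use the same local bi-Lipschitz decomposition together with a pigeonhole: classifying each point of $\omega_N$ by the piece $A_j$ nearest to it yields cardinalities $N_j$ with $\sum_j N_j = N$. Inside each $A_j$, Theorem~\ref{thm:jordanpolar} applied through the bi-Lipschitz parametrization produces $y_j\in A_j$ with $\sum_{x\in\omega^{(j)}_{N_j}} \|y_j-x\|^{-s} \leq (\sigma_{s,d}+\epsilon)\,N_j^{s/d}/\h_d(A_j)^{s/d}$. The short-range property, powered by $s>d$ together with the Minkowski content hypothesis $\h_d(A)=\m_d(A)$, then shows $\sum_{x\notin \omega^{(j)}_{N_j}} \|y_j-x\|^{-s}=o(N^{s/d})$ once the piece diameters are small relative to $N^{-1/d}$. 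Choosing $j$ to minimize the total and balancing via the concavity of $t\mapsto t^{s/d}$ yields the matching upper bound.

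The main obstacle is verifying this short-range cross-term estimate uniformly over arbitrary unconstrained configurations: since $\omega_N$ may contain many points off $A$, or arbitrarily close together, an adversarial configuration could in principle spoil a naive pigeonhole. The hypothesis $\h_d(A)=\m_d(A)$ is precisely what rules this out, as it forces $\L_p(\B{r}{A})\sim v_{p-d}\h_d(A)\,r^{p-d}$ so that only a controlled number of configuration points can cluster in any tube around $A$ at a given scale. Turning this into the quantitative cross-term bound, and reconciling the local Lipschitz distortions of the decomposition with the $\sigma_{s,d}$ coming from flat Euclidean space, is the delicate step where the present argument genuinely extends the Jordan-set results of \cite{BorodTAMS, hardinUnconstrained2020} to $(\h_d,d)$-rectifiable sets.
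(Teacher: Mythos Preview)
Your outline has the right chain of inequalities and correctly identifies the short-range mechanism, but there are two genuine gaps that the paper addresses explicitly and you do not.

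First, your lower bound on $\PP_s(A,N)$ places points only on the pieces $A_1,\ldots,A_M$ and then checks the potential only at test points $y\in A_{j_0}$. But the infimum defining $P_s(\omega_N,A)$ runs over all $y\in A$, including the leftover set $R=A\setminus\bigcup_j A_j$. Even when $\h_d(R)$ is arbitrarily small, $R$ can be nonempty and far from every $A_j$, so your configuration may have potential close to $0$ there and the construction fails. The paper repairs this with a separate \emph{stability} step (Lemma~\ref{lem:stability}): from $\m_d(A)=\h_d(A)$ one deduces $\overline{\m}_d(A\setminus B_{2r_N}(D))\leq\delta\,\m_d(A)$ whenever $\h_d(D)>(1-\delta)\h_d(A)$, and hence that the leftover can be covered by at most $\epsilon_0 N$ additional points at radius $r_N\asymp N^{-1/d}$, each of which alone already produces potential $\geq\PP_s^*(D,N)$ at the nearest leftover point. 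This is the place where the Minkowski-content hypothesis is actually consumed---not in the cross-term estimate.

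Second, Federer's decomposition \cite[Thm.~3.2.18]{federerGeometric1996} yields disjoint \emph{compact} pieces $A_m=\psi_m(K_m)$ with $K_m\subset\R^d$ compact and $\psi_m$ bi-Lipschitz with constant $1+\epsilon$; it gives neither Borel pieces of small diameter nor Jordan preimages. Compactness is exactly what buys a fixed separation $2h=\min_{l\neq k}\dist(A_l,A_k)>0$, after which the cross-term is bounded trivially by $Nh^{-s}=o(N^{s/d})$; small diameter is irrelevant, and your clause ``piece diameters small relative to $N^{-1/d}$'' cannot hold as $N\to\infty$. Because the $K_m$ are only compact, the paper first proves the theorem for arbitrary compact $A\subset\R^d$ (outer Jordan approximation $J_\epsilon\supset A$ with $\L_d(A)>(1-\delta)\L_d(J_\epsilon)$, using monotonicity for one inequality and the same stability lemma for the other), and only then pulls the general $d<p$ case back through $\psi_m$. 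Your one-step reduction directly to Jordan pieces is not supported by rectifiability.
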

We further remark that examples of $ (\h_d,d) $-rectifiable sets that satisfy $ \h_d(A) = \m_d(A) $ include $ d $-rectifiable compact sets and bi-Lipschitz images of closed sets in $ \mathbb R^d $. In what follows we say that the map $ \psi: \mathbb R^{d} \to \mathbb R^p $ has {\it bi-Lipschitz constant $ 1+K $}, if there holds 
\[
    (1+K)^{-1} \| x-y\| \leq \| \psi(x)- \psi(y)\| \leq (1+K)\| x-y\| \qquad \text{ for any }x,y\in A \subset \mathbb R^{d}.
\]
Note that this definition applies to norms in spaces $ \mathbb R^d $ and $ \mathbb R^p $, which do not need to agree or be Euclidean, but are fixed throughout the paper (and by an abuse of notation we write $ \|\cdot \| $ for either norm). Observe also that $ \psi $ must necessarily be invertible, with $ \psi^{-1} $ being a bi-Lipschitz map with the same constant.

The covering and polarization functionals share the common property of short-range functionals, by which any sequence of configurations attaining the values of asymptotics from Theorems~\ref{thm:covering_asmpt}--\ref{thm:polar_asmpt} is distributed uniformly over the set $ A $. One can follow the proof of \cite[Theorem 1.14]{hardinUnconstrained2020} to obtain this result; we provide a short independent argument for completeness. 

\begin{theorem}\label{thm:distr}
Assume $ A \subset \mathbb R^p $ is a compact $ (\mathcal \h_d,d)  $-rectifiable set satisfying $ \h_d(A) = \m_d(A)>0 $. Let $\{\omega_N\}_{N=1}^\infty$ be any sequence of configurations in $\R^p$ such that
$$
\lim_{N\to \infty} N^{1/d}R(\omega_N,A) = {\theta_{d}}{\mathcal H_d(A)}^{1/d}, \;\;\;\; \text{or} \;\;\;\;  \lim_{N\to \infty} \frac{P_s(A, \omega_N)}{N^{s/d}}=\frac{\sigma_{s,d}}{\mathcal H_d(A)^{s/d}}.
$$
Then the sequence of corresponding empirical measures satisfies
$$
\frac{1}{N} \sum_{x\in \omega_N} \delta_x \weakto \frac{\H_d(\cdot \cap A)}{\H_d(A)},
$$
with the convergence understood in the weak$^*$ sense.
\end{theorem}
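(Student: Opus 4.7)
The plan is to show that every weak-$*$ subsequential limit $\mu$ of the empirical measures $\mu_N:=\frac{1}{N}\sum_{x\in\omega_N}\delta_x$ equals $\mu_A:=\H_d(\,\cdot\cap A)/\H_d(A)$. First I would observe that any asymptotically optimal configuration must eventually lie in a bounded neighborhood of $A$: for covering, points outside a fixed neighborhood of $A$ do not affect $\dist(y,\omega_N)$ for $y\in A$; for polarization, since $s>d$, their aggregate contribution to any $\sum_x\|y-x\|^{-s}$ is $O(N)$ and thus negligible against $N^{s/d}$, so a positive fraction of far points would reduce the effective count achieving the leading asymptotic and contradict the assumed optimality. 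In either case $\{\mu_N\}$ is tight; pass to a subsequence with $\mu_N\weakto\mu$, and the same localization forces $\supp\mu\subset A$.

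\textbf{Partition and piecewise asymptotics.} The heart of the proof is a partition argument. Fix any finite decomposition $A=A_1\sqcup\cdots\sqcup A_k$ into $(\H_d,d)$-rectifiable pieces satisfying $\H_d(A_j)=\m_d(A_j)>0$ and $\mu(\partial A_j)=0$, obtained e.g.\ by intersecting $A$ with a sufficiently generic fine grid of boxes. Choose pairwise disjoint closed neighborhoods $V_j\supset A_j$ and set $n_j:=|\omega_N\cap V_j|$; the portmanteau theorem yields $n_j/N\to\mu(V_j)$, which approaches $\mu(A_j)$ as $V_j$ shrinks. For covering, once $N$ is large every $y\in A_j$ has its nearest neighbor in $V_j$, so
\[
R(\omega_N,A)\geq R(\omega_N\cap V_j,A_j)\geq \rho^*(A_j,n_j)\sim \theta_d\H_d(A_j)^{1/d}n_j^{-1/d}
\]
by Theorem~\ref{thm:covering_asmpt}. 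For polarization, splitting $U_s(y;\omega_N):=\sum_x\|y-x\|^{-s}$ across $V_j$ and its complement and taking the infimum over the smaller set $A_j\subset A$ yields
\[
P_s(A,\omega_N)\leq \inf_{y\in A_j}U_s(y;\omega_N)\leq P_s(A_j,\omega_N\cap V_j)+O(N)\leq \P_s^*(A_j,n_j)+O(N),
\]
and the $O(N)$ cross-term is dominated by $\P_s^*(A_j,n_j)\sim \sigma_{s,d}n_j^{s/d}/\H_d(A_j)^{s/d}$ because $s>d$. Comparing either inequality with the global asymptotic from Theorems~\ref{thm:covering_asmpt}--\ref{thm:polar_asmpt} yields $n_j/N\geq \H_d(A_j)/\H_d(A)\cdot(1-o(1))$ for each $j$.

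\textbf{Conclusion and main obstacle.} Since the $V_j$ are pairwise disjoint, $\sum_j n_j\leq N$, while $\sum_j \H_d(A_j)/\H_d(A)=1$, so each lower bound must saturate in the limit: $n_j/N\to \H_d(A_j)/\H_d(A)$, and hence $\mu(A_j)=\H_d(A_j)/\H_d(A)$ on every piece of every admissible partition. Because such partitions separate Borel probability measures supported on $A$, we conclude $\mu=\mu_A$, which proves the theorem. The chief technical difficulty is guaranteeing partitions in which every piece inherits the hypothesis $\H_d(A_j)=\m_d(A_j)$ required to invoke Theorems~\ref{thm:covering_asmpt}--\ref{thm:polar_asmpt}; this relies on Federer-type identifications of Hausdorff measure with Minkowski content on $d$-rectifiable sets, together with the well-behavedness of these quantities under intersection with generic closed boxes.
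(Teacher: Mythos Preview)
Your argument is correct in outline and takes a genuinely different route from the paper's. You use the standard ``partition and saturation'' scheme: each piece $A_j$ must asymptotically receive at least its $\H_d$-proportional share of points (by Theorems~\ref{thm:covering_asmpt}--\ref{thm:polar_asmpt} applied to $A_j$), and since $\sum_j n_j\le N$ while $\sum_j \H_d(A_j)/\H_d(A)=1$, all lower bounds saturate. The paper instead argues by contradiction in two stages: first $\mu\ll\H_d$ (if a positive $\mu$-mass sits on an $\H_d$-null set $D$, delete the points near $D$ and replace them by a much smaller covering of a thin neighborhood, achieving the same radius with a strictly smaller fraction of $N$ points---a contradiction); then $d\mu/d\H_d$ is constant (if the density is higher near $x_2$ than near $x_1$, the ball at $x_1$ already forces the asymptotic radius, so the surplus points near $x_2$ can be replaced by a smaller optimal local cover of $B_2\cap A$, again reducing cardinality without worsening the radius). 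Your approach is more direct and transportable; the paper's is more hands-on, needs only two balls rather than an entire partition, and separates the singular and absolutely continuous parts of $\mu$ explicitly.

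Two caveats on your write-up. First, a genuine exact partition $A=A_1\sqcup\cdots\sqcup A_k$ cannot admit pairwise disjoint \emph{closed} neighborhoods $V_j\supset A_j$, since adjacent pieces touch; you should instead choose the $A_j$ disjoint and at positive distance with $\sum_j\H_d(A_j)\ge(1-\epsilon)\H_d(A)$ (e.g.\ intersect with open boxes and discard the grid), run the saturation with $\sum_j\H_d(A_j)/\H_d(A)\ge 1-\epsilon$, and let $\epsilon\downarrow0$. Second, the obstacle you flag---that each $A_j$ must inherit $\H_d(A_j)=\m_d(A_j)$ so that Theorems~\ref{thm:covering_asmpt}--\ref{thm:polar_asmpt} apply---is real, and the paper faces the same issue when it applies Theorem~\ref{thm:covering_asmpt} to $\tilde B_1\cap A$; neither argument avoids it, and both rely on the fact (cf.\ \cite[Lemma~4.3]{borodachovLow2014} and the Federer decomposition used in the proof of Theorems~\ref{thm:covering_asmpt}--\ref{thm:polar_asmpt}) that $d$-rectifiable compacta and their well-chosen pieces satisfy $\H_d=\m_d$.
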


We also show that the smoothness assumptions for set $ A $ in Theorems~\ref{thm:covering_asmpt}--\ref{thm:polar_asmpt} are sharp in that the expression for the asymptotics in terms of $ \h_d(A) $ no longer holds without them.
\begin{theorem}\label{thm:minkisbigger}
Assume $A\subset \R^p$ is a compact $ (\mathcal \h_d,d)  $-rectifiable set satisfying $ \h_d(A) < \overline{\m}_d(A)$. Then we have
$$
\limsup_{N\to \infty}N^{1/d} \rho(A, N) \geqslant \limsup_{N\to \infty}N^{1/d} \rho^*(A, N) > \theta_d \H_d(A)^{1/d},
$$
and, for sufficiently large $s$, 
\[
        \liminf_{N\to \infty} \frac{\P_s(A,N)}{N^{s/d}} \leqslant \liminf_{N\to \infty} \frac{\P_s^*(A, N)}{N^{s/d}}<\frac{\sigma_{s,d}}{\mathcal H_d(A)^{s/d}}.
    \]
\end{theorem}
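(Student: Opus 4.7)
The plan is to reduce both strict inequalities to sharper general versions of Theorems~\ref{thm:covering_asmpt}--\ref{thm:polar_asmpt} valid without the hypothesis $\h_d(A)=\m_d(A)$, in which $\h_d(A)$ is replaced by the upper Minkowski content $\overline{\m}_d(A)$ on the geometric side. Concretely, for any compact $(\h_d,d)$-rectifiable $A\subset\R^p$, I aim to prove
\[
    \limsup_{N\to \infty} N^{1/d}\rho^*(A, N)\ \geq\ \theta_d\, \overline{\m}_d(A)^{1/d},
    \qquad
    \liminf_{N\to \infty} \frac{\P_s^*(A, N)}{N^{s/d}}\ \leq\ \frac{\sigma_{s,d}}{\overline{\m}_d(A)^{s/d}}
\]
for sufficiently large $s$. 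Combined with the hypothesis $\h_d(A)<\overline{\m}_d(A)$, these immediately yield the strict inequalities in the statement. The flanking inequalities $\rho(A,N)\geq \rho^*(A,N)$ and $\P_s(A,N)\leq \P_s^*(A,N)$ are trivial, since the constrained variants restrict the admissible configurations.

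For the covering estimate, I would select a subsequence $r_k \downarrow 0$ realizing $\limsup_{r\downarrow 0}\L_p(\B{r}{A})/(v_{p-d} r^{p-d})=\overline{\m}_d(A)$, and set $N_k$ as the least integer satisfying $\rho^*(A, N_k)\leq r_k$, so that $\rho^*(A,N_k-1)>r_k$. If $\omega_{N_k}^*$ is an optimal covering, the inclusion
\[
    \B{r_k}{A}\ \subset\ \B{r_k+\rho^*(A,N_k)}{\omega_{N_k}^*}\ \subset\ \B{2r_k}{\omega_{N_k}^*}
\]
yields the volume bound $\L_p(\B{r_k}{A})\leq N_k v_p (2 r_k)^p$, whence $N_k r_k^d$ is bounded below along the subsequence by a multiple of $\overline{\m}_d(A)$; passing to $(N_k-1)$ gives $\limsup_N N^{1/d}\rho^*(A,N)\geq c\,\overline{\m}_d(A)^{1/d}$ for some dimension-dependent constant $c>0$. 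Upgrading this crude $c$ to the sharp $\theta_d$ requires localization: partition the set at scale $r_k$ into patches on each of which the local Minkowski density approximates $\overline{\m}_d(A)$, and apply the local form of Theorem~\ref{thm:covering_asmpt} on those patches. This is essentially the short-range framework of~\cite{hardinAsymptotic2021} already invoked in the proof of Theorem~\ref{thm:covering_asmpt}.

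The polarization estimate is handled in parallel. For sufficiently large $s$, the near-equivalence $\P_s^*(A,N)^{1/s}\asymp \rho^*(A,N)^{-1}$ as $s\to\infty$ allows the bound to be imported from the covering estimate, provided the gap is quantified carefully. Alternatively, a direct layer-cake estimate picks a test point $y\in A$ lying in a region where the Minkowski density is close to its upper limit, and bounds $\sum_{x\in\omega_N}\|y-x\|^{-s}$ from above through the growth of $\L_p(\B{r}{A})$. Matching the sharp constant $\sigma_{s,d}$ again relies on the short-range localization.

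The main technical obstacle is precisely this last step in both arguments: the global volume comparison loses a dimension-dependent factor, so extracting the sharp constants $\theta_d$ and $\sigma_{s,d}$ requires the short-range machinery to reduce the problem locally to the known Euclidean asymptotics on neighborhoods whose $d$-dimensional Minkowski density approaches $\overline{\m}_d(A)$.
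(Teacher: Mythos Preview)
Your reduction to the bound $\limsup_{N} N^{1/d}\rho^*(A,N)\geq \theta_d\,\overline{\m}_d(A)^{1/d}$ does not go through, and the obstacle you flag is not merely technical. The short-range/localization machinery of Section~\ref{sec:proofs_recti} produces the sharp constant $\theta_d$ by approximating $A$, up to an $\h_d$-null remainder, by bi-Lipschitz images of compact pieces of $\R^d$; this decomposition captures $\h_d(A)$ but is blind to $\overline{\m}_d(A)$. The excess $\overline{\m}_d(A)-\h_d(A)$ lives entirely on that $\h_d$-null remainder, which has no local $d$-dimensional Euclidean model, so there are no ``patches on which the local Minkowski density approximates $\overline{\m}_d(A)$'' on which covering would carry the constant $\theta_d$. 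This is exactly why Theorem~\ref{thm:generalest} states the $\overline{\m}_d$ comparison only with non-sharp dimensional constants; your crude volume step already delivers that much, and nothing in the sketch upgrades it.

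The paper's proof is organized differently and never aims for the sharp constant against $\overline{\m}_d$. One fixes a $d$-rectifiable $\wt A\subset A$ with $\m_d(\wt A)=\h_d(\wt A)\geq \h_d(A)-\epsilon$ and, along a sequence $r_n\downarrow 0$ realizing $\overline{\m}_d(A)$, splits an optimal covering $\omega_N$ of $A$ with radius comparable to $\alpha r_n$ into the points near $\wt A$ and the points needed to cover $A\setminus \B{\alpha r_n}{\wt A}$. Theorem~\ref{thm:covering_asmpt} applied to $\wt A$ forces the first group to contain at least $\theta_d^d(\h_d(A)-2\epsilon)\,(\alpha r_n/3)^{-d}$ points; the crude volume bound (essentially your own inclusion argument, via~\eqref{eq:inclusion} and~\eqref{eq:lower_minkowski_difference}) forces the second group to contain at least $c(p,d,\alpha)\big(\overline{\m}_d(A)-\h_d(A)-O(\epsilon)\big)r_n^{-d}$ further points, a strictly positive quantity. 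Summing gives $\limsup_N N\rho^*(A,N)^d>\theta_d^d\h_d(A)$. Thus only the rectifiable piece needs the sharp constant; the Minkowski excess merely needs to contribute something positive. The polarization inequality is then deduced from the covering one via Corollary~\ref{cor:limiting_polar}, along the lines you indicated.
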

Finally, to conclude this section, we present some general estimates on covering and polarization.
\begin{theorem}\label{thm:generalest}
Assume $A\subset \R^p$ is a compact set. Then, for some positive finite constants $c_1$ and $c_2$, that depend on $d$ and $p$, we have
$$
c_1 \underline{\m}_d(A) ^{1/d} \leqslant \liminf_{N\to \infty}N^{1/d} \rho^*(A, N) \leqslant \liminf_{N\to \infty}N^{1/d} \rho(A, N)\leqslant c_2 \underline{\m}_d(A) ^{1/d},
$$
$$
c_3 \overline{\m}_d(A) ^{1/d} \leqslant \limsup_{N\to \infty}N^{1/d} \rho^*(A, N) \leqslant \limsup_{N\to \infty}N^{1/d} \rho(A, N)\leqslant c_4 \overline{\m}_d(A) ^{1/d}.
$$
\end{theorem}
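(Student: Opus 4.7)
The plan is to sandwich the covering radii via two elementary volume-comparison observations. First, I would note that if $A\subset \B{r}{\omega_N}$ with $\omega_N\subset \R^p$, then the triangle inequality yields $\B{r}{A}\subset \B{2r}{\omega_N}$ and hence
\[
\L_p(\B{r}{A}) \leqslant Nv_p(2r)^p.
\]
Second, if $\omega\subset A$ is a maximal $2r$-separated subset, then the balls $\B{r}{x}$, $x\in\omega$, are pairwise disjoint and contained in $\B{r}{A}$, giving $|\omega|v_pr^p\leqslant \L_p(\B{r}{A})$; maximality forces $A\subset \B{2r}{\omega}$, so padding $\omega$ within $A$ to cardinality $N$ yields $\rho(A,N)\leqslant 2r$ whenever $|\omega|\leqslant N$. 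The middle inequalities in each chain are immediate from $\rho^*(A,N)\leqslant \rho(A,N)$.

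To obtain the lower bound $c_1\underline{\m}_d(A)^{1/d}$ on $\liminf N^{1/d}\rho^*(A,N)$, I would apply the first observation with $r = r_N := \rho^*(A,N)$ and rearrange to get
\[
Nr_N^d \geqslant \frac{v_{p-d}}{v_p 2^p}\cdot\frac{\L_p(\B{r_N}{A})}{v_{p-d}r_N^{p-d}}.
\]
Since $r_N\to 0$, taking $\liminf_N$ delivers the bound with $c_1 = (v_{p-d}/(v_p 2^p))^{1/d}$. For the corresponding $\limsup$ bound $c_3\overline{\m}_d(A)^{1/d}$, the sequence $r_N$ need not saturate $\overline{\m}_d$, so I would pick a subsequence $s_k\downarrow 0$ realizing $\overline{\m}_d(A)$, set $N_k := \min\{N : \rho^*(A,N)\leqslant s_k\}$, and apply the first observation to an optimal $N_k$-point covering. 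Since $\rho^*(A, N_k-1) > s_k$ and $N_k-1\sim N_k$, the inequality $(N_k-1)^{1/d}\rho^*(A, N_k-1) > (N_k-1)^{1/d}s_k$ then produces $c_3 = c_1$.

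For the upper bounds, I would combine the second observation with the definition of $\overline{\m}_d$: for any $\ep>0$ and all sufficiently small $r$, $\L_p(\B{r}{A})\leqslant (\overline{\m}_d(A)+\ep)v_{p-d}r^{p-d}$, so $|\omega|\leqslant (\overline{\m}_d(A)+\ep)v_{p-d}/(v_pr^d)$. For large $N$, choosing $r = ((\overline{\m}_d(A)+\ep)v_{p-d}/(Nv_p))^{1/d}$ makes $|\omega|\leqslant N$, hence $\rho(A,N)\leqslant 2r$ and thus $c_4 = 2(v_{p-d}/v_p)^{1/d}$ after letting $\ep\downarrow 0$. The bound $c_2\underline{\m}_d(A)^{1/d}$ on the $\liminf$ follows analogously along a subsequence $s_k\downarrow 0$ realizing $\underline{\m}_d(A)$: the construction produces integers $N_k$ with $N_k^{1/d}\rho(A,N_k)\leqslant 2((\underline{\m}_d(A)+\ep)v_{p-d}/v_p)^{1/d}$, so $\liminf N^{1/d}\rho(A,N)\leqslant c_2\underline{\m}_d(A)^{1/d}$ with $c_2 = c_4$.

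The main obstacle I anticipate is not conceptual but bookkeeping: the liminf/limsup over the integer variable $N$ must be carefully aligned with the liminf/limsup over $r\downarrow 0$ defining $\m_d$ by passing to auxiliary subsequences $s_k$ and checking that $(N_k\pm 1)/N_k\to 1$ does not disturb the estimates. Corner cases such as $\underline{\m}_d(A)=0$ or $\overline{\m}_d(A)=\infty$ require interpreting the inequalities with the standard conventions but follow trivially from the same estimates.
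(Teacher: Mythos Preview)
Your proposal is correct and follows essentially the same route as the paper's proof: both arguments rest on the two volume comparisons you isolate (the inclusion $\B{r}{A}\subset \B{2r}{\omega_N}$ for the lower bounds, and a maximal separated subset of $A$ for the upper bounds), together with the same subsequence alignment between the integer variable $N$ and the scale $r\downarrow 0$ defining $\underline{\m}_d$ and $\overline{\m}_d$. The paper packages the packing/covering step as Lemma~\ref{lem:small_covering} and then reuses it with the appropriate choices of $M\in\{\underline{\m}_d(A),\overline{\m}_d(A)\}$, but the content is identical to what you wrote; even your treatment of the $\limsup$ lower bound via $N_k=\min\{N:\rho^*(A,N)\leqslant s_k\}$ and the observation $\rho^*(A,N_k-1)>s_k$ matches the paper's argument line for line.
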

One outcome of this theorem is a corollary that illustrates the sharpness of our assumption of existence of $\m_d(A)$.
\begin{corollary}
There exists a finite positive number $C$ that depends on $d$ and $p$ with the following property: if $A\subset \R^p$ is a compact set, and $\overline{\m}_d(A)>C\underline{\m}_d(A)$, then the limits
$$
\lim_{N\to \infty}N^{1/d} \rho^*(A, N) \;\;\;\; \text{and} \;\;\;\; \lim_{N\to \infty}N^{1/d} \rho(A, N)
$$
do not exist.
Furthermore, if $\overline{\m}_d(A)>C\underline{\m}_d(A)$, then for sufficiently large $s$, the limit
$$
\lim_{N\to \infty}\frac{\P^*_s(A, N)}{N^{s/d}}
$$
does not exist.
\end{corollary}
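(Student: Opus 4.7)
The plan is to reduce everything to the two-sided Minkowski content estimates of Theorem~\ref{thm:generalest}. For the covering statements the conclusion is immediate once the constant $C$ is chosen correctly; for the polarization statement I would pass through the elementary two-sided comparison $\P_s^*(A,N)\asymp \rho^*(A,N)^{-s}$ valid for $s>d$, whose ratio tightens as $s\to\infty$.

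For the covering case, set $C := (c_2/c_3)^d$ with $c_2, c_3$ from Theorem~\ref{thm:generalest}. Under the hypothesis $\overline{\m}_d(A) > C\underline{\m}_d(A)$ we have $c_3\,\overline{\m}_d(A)^{1/d} > c_2\,\underline{\m}_d(A)^{1/d}$, so Theorem~\ref{thm:generalest} directly yields
\[
\liminf_{N\to\infty} N^{1/d}\rho^*(A,N) \leq c_2\,\underline{\m}_d(A)^{1/d} < c_3\,\overline{\m}_d(A)^{1/d} \leq \limsup_{N\to\infty} N^{1/d}\rho^*(A,N),
\]
so $\lim_{N\to\infty} N^{1/d}\rho^*(A,N)$ does not exist. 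The same chain applies to $\rho(A,N)$ via $\rho(A,N) \geq \rho^*(A,N)$ and the matching bounds in Theorem~\ref{thm:generalest}.

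For polarization I would first establish the two-sided comparison $\rho^*(A,N)^{-s}\leq \P_s^*(A,N) \leq K_s\,\rho^*(A,N)^{-s}$ for $s>d$, where $K_s$ depends only on $s,d,p$ and the norm, and $K_s^{1/s}\to 1$ as $s\to\infty$. The lower bound is immediate: a configuration realizing $\rho^*(A,N)$ places some point within distance $\rho^*(A,N)$ of every $y\in A$, contributing at least $\rho^*(A,N)^{-s}$ to the sum. For the upper bound, given any $\omega_N$, pick $y^*\in A$ realizing $R(\omega_N,A)=r$ and decompose $\sum_x \|y^*-x\|^{-s}$ over dyadic annuli $\{2^k r\leq \|\cdot - y^*\| < 2^{k+1} r\}$ about $y^*$; a volumetric packing estimate bounds the number of points in the $k$-th annulus by $C_{d,p}\,2^{kd}$, yielding $\sum_x \|y^*-x\|^{-s} \leq K_s r^{-s}$ with $K_s = O\bigl(\sum_{k\geq 0}2^{k(d-s)}\bigr)$, a geometric series summable for $s>d$ whose total tends to a $d,p$-dependent constant as $s\to\infty$, so that $K_s^{1/s}\to 1$. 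Combining this comparison with Theorem~\ref{thm:generalest} gives
\[
\liminf_{N\to\infty}\frac{\P_s^*(A,N)}{N^{s/d}} \leq \frac{K_s}{c_3^s\,\overline{\m}_d(A)^{s/d}}, \qquad \limsup_{N\to\infty}\frac{\P_s^*(A,N)}{N^{s/d}} \geq \frac{1}{c_2^s\,\underline{\m}_d(A)^{s/d}},
\]
and the polarization limit fails whenever $\overline{\m}_d(A)/\underline{\m}_d(A) > K_s^{d/s}(c_2/c_3)^d$. Since $K_s^{d/s}\to 1$, the same constant $C = (c_2/c_3)^d$ suffices once $s$ is sufficiently large (with the threshold depending only on the ratio $\overline{\m}_d(A)/\underline{\m}_d(A)$).

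The main obstacle is the upper packing estimate $\P_s^*(A,N)\leq K_s\,\rho^*(A,N)^{-s}$: since arbitrary $N$-point configurations need not be well-separated, justifying the volumetric bound on the number of points in each annulus requires first restricting the supremum defining $\P_s^*(A,N)$ to configurations with near-optimal covering radius. This restriction is legitimate because any $\omega_N$ with $R(\omega_N,A)\gg \rho^*(A,N)$ admits a point $y\in A$ with $\sum_x\|y-x\|^{-s}\leq N R(\omega_N,A)^{-s}$, which is asymptotically negligible compared to the lower bound $\rho^*(A,N)^{-s}$ once $s$ is large; hence optimizers cluster, up to small error, on well-separated configurations where the dyadic packing estimate applies. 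Once this comparison is established, the remainder of the argument is routine bookkeeping with the inequalities of Theorem~\ref{thm:generalest}.
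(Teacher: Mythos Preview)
Your argument for the covering limits is correct and is exactly the intended deduction from Theorem~\ref{thm:generalest}: with $C=(c_2/c_3)^d$ the inequalities there force $\liminf<\limsup$ for both $\rho^*$ and $\rho$.

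The polarization part, however, has a real gap. The dyadic packing bound ``at most $C_{d,p}\,2^{kd}$ points in the $k$-th annulus about $y^*$'' is false for arbitrary configurations: nothing prevents all of $\omega_N$ from sitting on the sphere $\|y^*-x\|=r$. Your proposed fix --- restricting to configurations with near-optimal covering radius --- does not help, because good covering does not force separation. For instance, take an optimal $N/2$-point covering configuration and double every point: the covering radius is unchanged, yet every ball of any fixed radius contains twice as many points, and the packing count blows up. So ``near-optimal covering $\Rightarrow$ well-separated $\Rightarrow$ dyadic packing bound'' fails at the first arrow. Without separation, the only universal bound at $y^*$ is $P_s(\omega_N,A)\le N\,R(\omega_N,A)^{-s}$, which carries the unwanted factor $N$ and gives nothing useful.

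The paper gets the polarization conclusion by invoking Corollary~\ref{cor:nonex}: once the covering limit fails to exist, the polarization limit fails for all sufficiently large $s$. That corollary in turn rests on Theorem~\ref{thm:covering_polarized}, whose proof uses the \emph{weak separation} of optimal polarization configurations (Theorem~B, from \cite{reznikovCovering2018,hardinUnconstrained2020}): in an optimizer $\omega_N^*$, every ball of radius $\eta N^{-1/d}$ contains at most $p$ points. This is precisely the nontrivial ingredient that makes the annulus count work, and it applies only to optimizers, not to arbitrary $\omega_N$. If you want to keep your comparison strategy, quote that result to justify the packing estimate for $\omega_N^*$; otherwise, simply cite Corollary~\ref{cor:nonex} after establishing the covering part.
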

\section{Covering using optimal polarization sets}
This section quantifies the connection between the covering radius and polarization; namely, we will show that the discrete polarization resembles the covering radius when $s\to \infty$ in a strong sense. Moreover, we will show some local distributional properties of the optimal configurations for $\P_s^*(A, N)$, which did not follow from global properties in Theorem \ref{thm:distr}. We begin with the following general estimate.

\begin{theorem}
    \label{thm:covering_polarized}
    Let $A\subset \R^p$ be a compact set, $\H_d(A)>0$, and $s>p$. Then for any configuration $\omega_N$ that attains $\P^*_s(A, N)$ we have
    \begin{equation*}
        \rho^*(A, \omega_N) \leqslant C(s,d,p,A) N^{\frac{p}{d(s-p)}} \left(\P_s^*(A, N)\right)^{\frac{1}{p-s}}.
    \end{equation*}
    Moreover, 
    $$
    \lim_{s\to \infty} C(s,d,p,A)=1.
    $$
\end{theorem}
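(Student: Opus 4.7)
The plan is to reduce the theorem to a packing estimate for $\omega_N$. Let $\omega_N=\{x_1,\dots,x_N\}$ attain $\P^*_s(A,N)$, set $r:=\rho^*(A,\omega_N)$, and pick $y^*\in A$ realizing the covering radius, i.e.\ $\dist(y^*,\omega_N)=r$, so that $\|y^*-x_i\|\geq r$ for every $i$. The definition of polarization immediately yields
\[
    \P_s^*(A,N) \;=\; P_s(\omega_N,A) \;\leq\; \sum_{i=1}^N \|y^*-x_i\|^{-s},
\]
so the theorem reduces to producing an upper bound of the form $\sum_i\|y^*-x_i\|^{-s} \leq C_1(s,p,A)\, N^{p/d}\, r^{p-s}$; rearranging and taking the $(s-p)$-th root then gives the claimed estimate with $C(s,d,p,A)=C_1^{1/(s-p)}$.

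The key input needed to bound the sum is a separation estimate for optimal polarization configurations: there exists $c=c(s,d,A)>0$, bounded away from zero as $s\to\infty$, with $\min_{i\neq j}\|x_i-x_j\|\geq c\,N^{-1/d}$. One obtains this by a variational swap argument, comparing $\omega_N$ to the configuration in which one point of a too-close pair is relocated to the empty ball $B_r(y^*)$ (related separation bounds appear in \cite{BorodTAMS,hardinUnconstrained2020}). Granted this, the standard packing in $\R^p$ yields $\#\{i:\|y^*-x_i\|\leq R\}\leq C_p(R\,N^{1/d}/c)^p$ for $R\geq cN^{-1/d}$, and decomposing the sum over dyadic shells $B_{2^{k+1}r}(y^*)\setminus B_{2^k r}(y^*)$ produces a geometric series in $k$ that converges exactly because $s>p$:
\[
    \sum_{i=1}^N \|y^*-x_i\|^{-s} \;\leq\; \frac{C_p\,2^p}{c^p\,(1-2^{\,p-s})}\, N^{p/d}\, r^{p-s}.
\]
Solving for $r$ gives the explicit constant $C(s,d,p,A)=\bigl[C_p 2^p/(c^p(1-2^{p-s}))\bigr]^{1/(s-p)}$.

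For the assertion $C(s,d,p,A)\to 1$ as $s\to\infty$: the factor $(1-2^{p-s})^{-1/(s-p)}\to 1$ automatically, while the remaining prefactors raised to the power $1/(s-p)$ tend to $1$ provided the separation constant $c(s,d,A)$ stays bounded away from zero as $s\to\infty$. This is consistent with the heuristic that polarization-optimal configurations converge to covering-optimal ones in this limit, and covering-optimal configurations are manifestly separated on the scale $N^{-1/d}$. The main obstacle is accordingly securing this uniform-in-$s$ separation: separation of order $N^{-1/d}$ for each fixed $s$ is standard, but ensuring that the constant does not degenerate as $s\to\infty$ is technically delicate and may require either a careful refinement of the swap argument or an appeal to the continuous dependence of the optimizers on $s$.
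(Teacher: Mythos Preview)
Your overall strategy---pick $y^*\in A$ realizing the covering radius, decompose $\omega_N$ into shells around $y^*$, and bound the number of points in each shell via a separation/packing estimate---is exactly the route the paper takes. The paper uses arithmetic annuli $H_n=B_{n\rho}(y^*)\setminus B_{(n-1)\rho}(y^*)$ rather than dyadic shells, but that is cosmetic.

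The substantive difference, and the genuine gap you have already flagged, is the separation input. You ask for \emph{strong} separation $\min_{i\neq j}\|x_i-x_j\|\geq c(s,d,A)N^{-1/d}$ with $c$ bounded away from $0$ as $s\to\infty$. This is not known for optimal polarization configurations; the swap argument you sketch does not obviously yield a constant uniform in $s$, and in fact the literature only provides the \emph{weak} separation of Theorem~B (from \cite{reznikovCovering2018,hardinUnconstrained2020}): every ball of radius $\eta(s,d,A)N^{-1/d}$ contains at most $p$ points of $\omega_N$, with $\eta^{1/s}\to 1$. The paper uses precisely this result. It is enough for the counting step---cover each shell by balls of radius $\eta N^{-1/d}$ and multiply by $p$---and the asymptotic $\eta^{1/s}\to 1$ (rather than $\eta$ bounded below) is what drives $C(s,d,p,A)\to 1$, since the final constant is $c(s,d,p,A)^{1/(s-p)}$ with $c\asymp \eta^{-p}\sum_{n\geq 2}(n-1)^{-s}n^{p-1}$.

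So your proof becomes complete, and coincides with the paper's, once you replace the strong-separation hypothesis by the cited weak-separation theorem; no refinement of the swap argument or continuity-in-$s$ of optimizers is needed.
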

This theorem has several corollaries. The first one concerns the limit of polarization as $s\to \infty$.
\begin{corollary}
    \label{cor:limiting_polar}
    Let $A\subset \R^p$ be a compact set with $\H_d(A)>0$. Then
    \begin{equation}\label{sasha_limits1}
        \lim_{s\to \infty} \left(\li\right)^{1/s} = \frac{1}{\limsup\limits_{N\to\infty} \rho^*(A, N)N^{1/d}},
    \end{equation}
    and
    \begin{equation}\label{sasha_limits2}
        \lim_{s\to \infty} \left(\ls\right)^{1/s} = \frac{1}{\liminf\limits_{N\to\infty} \rho^*(A, N)N^{1/d}}.
    \end{equation}
\end{corollary}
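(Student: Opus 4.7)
The plan is to sandwich $(\P_s^*(A,N)/N^{s/d})^{1/s}$ between an easy elementary lower bound and the upper bound furnished by Theorem~\ref{thm:covering_polarized}, and then let $s\to\infty$ on both sides. The elementary direction exploits the fact that a single term already lower bounds a sum of positive terms, while the harder direction uses that optimal polarization configurations are near-optimal for covering when $s$ is large, which is exactly the content of Theorem~\ref{thm:covering_polarized}.

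For the elementary direction, I will observe that for any configuration $\omega_N$ and any $y\in A$,
\[
    \sum_{x\in \omega_N} \|y-x\|^{-s} \;\geq\; \bigl(\min_{x\in \omega_N}\|y-x\|\bigr)^{-s} \;=\; \dist(y,\omega_N)^{-s},
\]
so $P_s(\omega_N,A) \geq R(\omega_N,A)^{-s}$. Choosing $\omega_N$ to attain $\rho^*(A,N)$ gives $\P_s^*(A,N) \geq \rho^*(A,N)^{-s}$, which after dividing by $N^{s/d}$ and taking the $s$-th root becomes
\[
    \Bigl(\frac{\P_s^*(A,N)}{N^{s/d}}\Bigr)^{1/s} \;\geq\; \frac{1}{N^{1/d}\rho^*(A,N)}.
\]
Since $x\mapsto x^{1/s}$ is continuous and increasing on $(0,\infty)$, applying $\liminf_N$ (for \eqref{sasha_limits1}) or $\limsup_N$ (for \eqref{sasha_limits2}) to both sides, and using $\liminf 1/a_N=1/\limsup a_N$, produces one of the required inequalities in each identity, uniformly for every $s>p$.

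For the reverse direction, I will apply Theorem~\ref{thm:covering_polarized} to a configuration $\omega_N^{(s)}$ realizing $\P_s^*(A,N)$; since $\rho^*(A,N)\leq \rho^*(A,\omega_N^{(s)})$, the theorem, combined with the identity $\frac{p}{d(s-p)}+\frac{1}{d}=\frac{s}{d(s-p)}$, rearranges to
\[
    N^{1/d}\rho^*(A,N) \;\leq\; C(s,d,p,A)\,\Bigl(\frac{N^{s/d}}{\P_s^*(A,N)}\Bigr)^{1/(s-p)}.
\]
Taking $\limsup_N$ (for \eqref{sasha_limits1}) or $\liminf_N$ (for \eqref{sasha_limits2}), invoking continuity of $x\mapsto x^{1/(s-p)}$ and swapping $\limsup$/$\liminf$ with reciprocals, yields an inequality of the form
\[
    \Bigl(\li\Bigr)^{1/s} \;\leq\; \Bigl(\frac{C(s,d,p,A)}{\limsup_{N\to\infty} N^{1/d}\rho^*(A,N)}\Bigr)^{(s-p)/s},
\]
and the analogue for \eqref{sasha_limits2} with $\limsup$ and $\liminf$ swapped.

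Finally, as $s\to\infty$ the exponent $(s-p)/s$ tends to $1$ and $C(s,d,p,A)\to 1$ by the second assertion of Theorem~\ref{thm:covering_polarized}, so each right-hand side converges to $1/\limsup_{N\to\infty} N^{1/d}\rho^*(A,N)$, respectively $1/\liminf_{N\to\infty} N^{1/d}\rho^*(A,N)$. Together with the matching lower bounds of the elementary direction, this squeezes out both \eqref{sasha_limits1} and \eqref{sasha_limits2}. The only serious input is Theorem~\ref{thm:covering_polarized} itself; the remaining work is routine bookkeeping of $\liminf$/$\limsup$ under monotone transformations, with the mild nuisance of confirming that the quantities involved are finite and positive, which follows from Theorem~\ref{thm:generalest} together with the standing hypothesis $\H_d(A)>0$.
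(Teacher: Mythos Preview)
Your proposal is correct and follows essentially the same approach as the paper: both obtain the lower bound by plugging an optimal covering configuration into $\P_s^*$ to get $\P_s^*(A,N)\geq \rho^*(A,N)^{-s}$, and both obtain the upper bound by rearranging the inequality of Theorem~\ref{thm:covering_polarized} and using $C(s,d,p,A)\to 1$ as $s\to\infty$. Your write-up is slightly more careful with the $\liminf/\limsup$ bookkeeping and the exponent $(s-p)/s$, but the substance is identical.
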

This corollary is useful, in particular, in proving the non-existence of the asymptotics. Indeed, the following statement clearly follows from the corollary above.
\begin{corollary} \label{cor:nonex}
Let $A\subset \R^p$ be a compact set with $\H_d(A)>0$. Assume the limit for the covering radius does not exist; i.e.,
$$
\limsup\limits_{N\to\infty} \rho^*(A, N)N^{1/d} > \liminf\limits_{N\to\infty} \rho^*(A, N)N^{1/d}.
$$
Then, for sufficiently large values of $s$, we have
$$
\ls > \li.
$$
\end{corollary}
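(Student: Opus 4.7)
The claim follows almost directly from Corollary \ref{cor:limiting_polar}, so my plan is essentially to translate the covering inequality into a polarization inequality via the $s \to \infty$ limits. I would set $L := \liminf_{N\to\infty} \rho^*(A,N) N^{1/d}$ and $U := \limsup_{N\to\infty} \rho^*(A,N) N^{1/d}$; by hypothesis $L < U$, and these are the quantities governing the right-hand sides of \eqref{sasha_limits1} and \eqref{sasha_limits2}.

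The key step is to apply Corollary \ref{cor:limiting_polar} to $A$ (which satisfies $\H_d(A) > 0$ by assumption) to obtain the two identities
\[
\lim_{s \to \infty} \left(\li\right)^{1/s} = \frac{1}{U}, \qquad \lim_{s \to \infty} \left(\ls\right)^{1/s} = \frac{1}{L}.
\]
Since $L < U$ implies $1/U < 1/L$ (with the convention $1/0 = +\infty$ if $L = 0$), the two $s \to \infty$ limits above are distinct. By the basic observation that if two sequences of positive numbers have $s$-th roots converging to distinct limits then the sequences themselves must be distinct for large $s$, there exists $s_0$ such that for every $s > s_0$,
\[
\left(\li\right)^{1/s} < \left(\ls\right)^{1/s},
\]
and raising both sides to the $s$-th power gives $\li < \ls$, which is the non-existence of the polarization asymptotics.

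I anticipate no real obstacle; the only wrinkle worth acknowledging is the boundary cases $L = 0$ or $U = +\infty$, but in both situations the separation $1/U < 1/L$ still holds in the extended reals, and the $s$-th root argument remains valid (e.g.\ if $L = 0$, then $(\ls)^{1/s} \to +\infty$, which certainly dominates $(\li)^{1/s} \to 1/U < \infty$ for all large $s$). Thus the corollary is essentially a formal consequence of Corollary \ref{cor:limiting_polar}, recorded for emphasis because it is the application motivating the section.
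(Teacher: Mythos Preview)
Your proposal is correct and follows exactly the approach the paper indicates: the paper simply states that Corollary~\ref{cor:nonex} ``clearly follows from the corollary above'' (i.e., Corollary~\ref{cor:limiting_polar}), and your argument spells out precisely this implication. There is nothing to add.
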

Our next corollary gives the covering properties of optimal configurations for $\P_s^*(A,N)$ with minimal assumptions on the set $A$. We note that the order of covering that we establish in the statement below is optimal, since for sets of positive $ \h_d $-measure, optimal covering has asymptotic order at least $N^{1/d}$. For convex sets, the estimate below was proved in \cite{reznikovCovering2018}.
\begin{corollary}
    \label{cor:cover_polar}
    Let $A\subset \R^p$ be a compact set and assume that for some finite positive constant $C_s$ we have, for every $N\geqslant 1$, an estimate $\P_s^*(A, N)\geqslant C_s N^{s/d}$. Then there exists a finite positive constant $R_s$ such that for any $N\geqslant 1$ and any configuration $\omega_N$ optimal for $\P_s^*(A, N)$, we have
    $$
		R(\omega_N,A) \leqslant R_s N^{1/d}.
		$$
In particular, our assumption is satisfied if $\overline{\mathcal{M}}_d(A)<\infty$.
\end{corollary}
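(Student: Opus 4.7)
The plan is to apply Theorem~\ref{thm:covering_polarized} directly to an optimal configuration, then substitute the hypothesized lower bound for $\P_s^*(A,N)$, and finally collect powers of $N$. Since $s>p$ will be needed by that theorem, I will assume this (or restrict to $s$ large, as permitted by the ``in particular'' clause). Let $\omega_N$ attain $\P_s^*(A,N)$. Theorem~\ref{thm:covering_polarized} then gives
\[
R(\omega_N,A)\leqslant C(s,d,p,A)\, N^{\frac{p}{d(s-p)}}\,\bigl(\P_s^*(A,N)\bigr)^{\frac{1}{p-s}}.
\]
Because $s>p$, the exponent $1/(p-s)$ is negative, so the assumption $\P_s^*(A,N)\geqslant C_s N^{s/d}$ yields
\[
\bigl(\P_s^*(A,N)\bigr)^{\frac{1}{p-s}}\leqslant C_s^{\frac{1}{p-s}}\,N^{\frac{s}{d(p-s)}}.
\]

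Multiplying the bounds and collecting the $N$-exponents gives
\[
\frac{p}{d(s-p)}+\frac{s}{d(p-s)}=\frac{p-s}{d(s-p)}=-\frac{1}{d},
\]
so $R(\omega_N,A)\leqslant R_s\, N^{-1/d}$ with $R_s:=C(s,d,p,A)\,C_s^{1/(p-s)}$, which is finite since $\lim_{s\to\infty}C(s,d,p,A)=1$. This proves the main estimate.

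For the ``in particular'' statement, I will reduce the hypothesis $\overline{\m}_d(A)<\infty$ to the assumed lower bound on $\P_s^*$ via an elementary comparison between polarization and covering: if $\omega_N^{\mathrm{cov}}$ is an optimal covering configuration with radius $\rho^*(A,N)$, then for every $y\in A$ some $x\in \omega_N^{\mathrm{cov}}$ lies within $\rho^*(A,N)$ of $y$, so $\sum_{x\in\omega_N^{\mathrm{cov}}}\|y-x\|^{-s}\geqslant \rho^*(A,N)^{-s}$. Taking the supremum over configurations, $\P_s^*(A,N)\geqslant \rho^*(A,N)^{-s}$. Theorem~\ref{thm:generalest} gives $\limsup_N N^{1/d}\rho^*(A,N)\leqslant c_4 \overline{\m}_d(A)^{1/d}$, hence $\rho^*(A,N)\leqslant c\, N^{-1/d}$ for all $N$ large (and for the finitely many small $N$ we absorb the loss into $C_s$, using that $\P_s^*(A,N)>0$ on any nonempty compact set). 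Therefore $\P_s^*(A,N)\geqslant C_s N^{s/d}$, and the first part applies.

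The only real obstacle is bookkeeping the signs of the exponents: since Theorem~\ref{thm:covering_polarized} and Theorem~\ref{thm:generalest} do all the analytic work, the argument is essentially arithmetic combined with the trivial polarization-versus-covering inequality.
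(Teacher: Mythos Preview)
Your argument is correct and is exactly the intended one: the paper does not give a separate proof of this corollary, since it follows immediately from Theorem~\ref{thm:covering_polarized} by substituting the assumed lower bound $\P_s^*(A,N)\geqslant C_sN^{s/d}$ and simplifying the exponents, precisely as you do; for the ``in particular'' clause the paper relies on Lemma~\ref{lem:small_covering} (second bullet), which is proved via the same covering-to-polarization comparison you wrote out. Two cosmetic remarks: the displayed bound in the statement should read $R_sN^{-1/d}$ (a typo you implicitly corrected), and your justification ``$R_s$ is finite since $\lim_{s\to\infty}C(s,d,p,A)=1$'' is unnecessary---finiteness of $C(s,d,p,A)$ for the fixed $s$ at hand is already part of Theorem~\ref{thm:covering_polarized}.
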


\section{Main results on fractal sets}
Recall \cite{Hutchinson} that a \textit{similitude contraction} $ \psi: \mathbb R^p \to \mathbb R^p $ can be written as
\[
    \psi( x) = r\cdot O x +  z
\]
with an orthogonal matrix $ O \in \mathcal O(p) $, a vector $  z \in\mathbb R^p $, and a contraction ratio $ 0 <r< 1 $.  
It is well-known that any collection of similitude contractions has a compact set of fixed points. Conversely, it will be convenient to consider the class of fractals defined as the set of fixed points of similitude contractions.
\begin{defin}
A compact set $ A\subset \mathbb R^p $ is a \textit{self-similar fractal} with similitudes $ \left\{\psi_m\right\}_{m=1}^M $ with contraction ratios $ 0< r_m < 1$,  $ 1\leq m\leq M $, if
\[
    A = \bigcup_{m=1}^M \psi_m(A),
\]
where the union is disjoint.
We further say that $A$ satisfies the \textit{open set condition} if there exists a bounded open set $ V\subset \mathbb R^p $ such that 
\[
    \bigcup_{m=1}^M \psi_m(V) \subset V,
\]
where the union in the left-hand side is disjoint.
\end{defin}
In what follows the fractal sets are always assumed to be self-similar and satisfying the open set condition. For such an $A$, it is known \cite{MR867284, MR1333890} that its Hausdorff dimension $\dim_H A = d$ is the unique solution of the equation
\begin{equation}
    \label{eq:dim}
    \sum_{m=1}^M r_m^d =1.  
\end{equation}
It will also be used that such fractal sets $ A $ are {\it $d$-regular}, that is, satisfying
\begin{equation}
    \label{eq:dregular}
    c^{-1} r^d \leq \H_d(A\cap \B{ r}{ x}) \leq c r^d,
\end{equation}
for any $ x\in A $ and $ 0 \leq r \leq \diam A $, with some constant $ c>0 $. It is well-known \cite{hardinUnconstrained2020},\cite{erdelyiRiesz2013}, that $ d $-regularity implies that for some constant $C>0$ and any integer $N\geqslant 1$ we have
\begin{equation}
    \label{eq:asymp_bounds}
    C^{-1} \leq \frac{\PP_s^*(A, N)}{N^{s/d}} \leq C
\end{equation}
The same estimates hold for the constrained polarization $ \mathcal P_s $; the constant $ C $ in general depends on $ A $.
It was noticed by Lalley \cite{lalleyacta,lalleyPacking1988} that the existence of the limit
$$
\lim_{N\to \infty}N^{1/d}\rho^*(A, N)
$$
can be tackled using renewal theory, and thus depends on the properties of the multiplicative subgroup of $ \mathbb R $, generated by $\{r_m\}_{m=1}^M$ \cite{feller}. Our first main result of this section is to prove the same result for polarization, which is a less local and, therefore, harder to handle quantity. 
\begin{theorem}\label{sasha_fractal_indep}
Let $A$ be a fractal set defined by similitudes $\{\psi_m\}_{m=1}^M$ with contraction ratios $\{r_m\}_{m=1}^M$. Let $s>d=\dim_H(A)$.
If the set
$$
\{t_1 \log(r_1) + \cdots + t_M\log(r_M): t_1,\ldots,t_M\in \mathbb{Z}\}
$$
is dense in $\R$, then the limits
$$
\lim_{N\to \infty} \frac{\PP_s^*(A, N)}{N^{s/d}},\qquad
\lim_{N\to \infty} \frac{\PP_s(A, N)}{N^{s/d}}
$$
exist.
\end{theorem}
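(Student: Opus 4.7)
The plan is to adapt Lalley's renewal-theoretic approach from \cite{lalleyPacking1988} (developed there for the covering radius) to the maximal polarization functional. The starting point is the disjoint self-similar decomposition $A = \bigcup_{m=1}^{M} \psi_m(A)$, combined with the scaling identity $\|\psi_m(x)-\psi_m(y)\|^{-s} = r_m^{-s}\|x-y\|^{-s}$ for the Riesz kernel. Given a configuration $\omega_N \subset \R^p$ nearly optimal for $\P_s^*(A,N)$, I would partition it into subconfigurations $\omega_{N_m}$ corresponding to each $\psi_m(A)$ (e.g., by nearest point). For $y \in \psi_m(A)$, the self-contribution from $\omega_{N_m}$ equals $r_m^{-s} P_s(\psi_m^{-1}(\omega_{N_m}), A)$, while the cross-contributions from the other $\omega_{N_k}$ are at most $N \cdot \delta^{-s}$ where $\delta > 0$ is the minimum distance between distinct compact sets $\psi_m(A)$ (positive by the open set condition). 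Since $s > d$ the intrinsic term is of order $N^{s/d}$ and dominates the linear cross term $O(N)$; matching upper and lower bounds yields the approximate recursion
\[
    \P_s^*(A,N) = \max_{N_1+\cdots+N_M=N}\;\min_{1 \le m \le M} r_m^{-s}\,\P_s^*(A,N_m) + O(N).
\]

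To convert this into a form amenable to renewal theory, I would invert the polarization functional. Set $M(p) := \min\{N : \P_s^*(A,N) \geq p\}$, which is nondecreasing and, by \eqref{eq:asymp_bounds}, satisfies $M(p) \asymp p^{d/s}$. The recursion translates into
\[
    M(p) = \sum_{m=1}^M M(p\,r_m^s) + (\text{lower-order terms}),
\]
since to certify polarization $\ge p$ globally it suffices (and is essentially necessary) to certify polarization $\ge p\,r_m^s$ on each sub-fractal $\psi_m(A)$. Passing to logarithmic variables $t = \log p$ and setting $g(t) = M(e^t)\,e^{-dt/s}$, the equation becomes
\[
    g(t) = \sum_{m=1}^M r_m^d\,g\bigl(t - s\log(1/r_m)\bigr) + o(1), \qquad t\to\infty,
\]
where the weights $r_m^d$ sum to $1$ by the dimension equation \eqref{eq:dim}. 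This is a renewal equation with step distribution $\mu = \sum_m r_m^d\,\delta_{s\log(1/r_m)}$. The hypothesis that $\{t_1\log r_1 + \cdots + t_M \log r_M : t_j \in \Z\}$ is dense in $\R$ is exactly the non-arithmeticity of $\mu$, whereupon the key renewal theorem (Feller \cite{feller}, Chapter~XI) yields $g(t) \to g^*$ as $t\to\infty$; inverting gives $\P_s^*(A,N)/N^{s/d} \to (g^*)^{-s/d}$.

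The main obstacle is making the approximate recursion rigorous enough to drive the renewal theorem. I would address this by sandwiching $M(p)$ between sequences that satisfy \emph{exact} renewal equations with slightly perturbed data, following Lalley's treatment of the covering counting function, and exploiting the monotonicity of $M$ to transfer the $o(1)$ defect through the convolution powers. The fact that $s/d > 1$ is crucial here, as it renders the $O(N)$ errors negligible against the $N^{s/d}$ asymptotics; without this separation the approach would fail. The constrained case $\P_s(A,N)$ is handled in parallel: the natural self-similar split already places sub-configurations inside $\psi_m(A) \subset A$, so the constrained sub-problems satisfy the same renewal equation and converge to the same limit.
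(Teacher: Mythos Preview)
Your proposal is correct and follows essentially the same route as the paper: invert the polarization to $N(t)=\min\{N:\P_s^*(A,N)\ge t\}$, derive the approximate renewal equation $N(t)=\sum_m N(tr_m^s)+L(t)$ from the self-similar splitting plus the $O(N)$ cross-term bound, change to logarithmic variables so that the weights become $r_m^d$, and apply the non-arithmetic renewal theorem.

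The one technical divergence worth noting is how the error is handled. You propose sandwiching $M(p)$ between solutions of exact renewal equations in Lalley's style, but the paper avoids this: it invokes a renewal theorem (Theorem~C) that tolerates a defect $|z(u)|\le Ce^{-\epsilon u}$ directly, and then simply bounds $L(t)$ on both sides. The upper bound $L(t)\le 0$ is immediate from gluing optimal subconfigurations; for the lower bound the paper uses the \emph{superadditivity} $\P_s^*(A,n_1+n_2)\ge \P_s^*(A,n_1)+\P_s^*(A,n_2)$ to show that $N(tr_m^s)\le N_m + CN^{d/s}$, giving $L(t)\ge -Ct^{d^2/s^2}$. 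This yields exponential decay in $u=\log t$ (not merely $o(1)$, which would be insufficient for the renewal theorem), and is cleaner than sandwiching. Your remark that $s>d$ is what separates the $O(N)$ error from the $N^{s/d}$ main term is exactly the mechanism behind this rate.
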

On the other hand, when the set of linear combinations of the contraction ratios is not dense in $\R$, Lalley proves existence of the limits over certain implicit sub-sequences. However, nothing is said about the sharpness of this result. We follow the ideas from \cite{andrez} to show that the assumption of Theorem \ref{sasha_fractal_indep} is sharp.
\begin{theorem}\label{sasha_nonex}
Let $A$ be a fractal set defined by similitudes $\{\psi_m\}_{m=1}^M$ with contraction ratios $\{r_m\}_{m=1}^M$. Let $d=\dim_H(A)$ and $s>d$.
If the set
$$
\{t_1 \log(r_1) + \cdots + t_M\log(r_M): t_1,\ldots,t_M\in \mathbb{Z}\}
$$
is not dense in $\R$, then the limits
$$
\lim_{N\to\infty} \rho(A, N)N^{1/d}, \quad \text{and} \quad \lim_{N\to\infty} \rho^*(A, N)N^{1/d}
$$
do not exist. Therefore, Corollary \ref{cor:nonex} implies that, for large values of $s$, the limit
$$
\lim_{N\to \infty} \frac{\PP_s^*(A, N)}{N^{s/d}}
$$
does not exist.
\end{theorem}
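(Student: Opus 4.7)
The plan is to exploit the fact that the non-density hypothesis forces the contraction ratios into a discrete multiplicative lattice; the covering functional then satisfies a renewal recursion whose asymptotic solution is log-periodic rather than constant, and the polarization statement follows at once from Corollary~\ref{cor:nonex}.

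First I would reformulate the hypothesis: the $\Z$-module generated by $\log r_1, \dots, \log r_M$ fails to be dense in $\R$ if and only if it is a discrete subgroup, equivalently, there exist $r_0 \in (0,1)$ and positive integers $n_1, \dots, n_M$ with $r_m = r_0^{n_m}$ for every $m$. Every scale arising from iterated self-similarity is then an integer power of $r_0$, and \eqref{eq:dim} becomes $\sum_m r_0^{n_m d} = 1$.

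Next I would derive a self-similar recursion. The open set condition provides a fixed separation $\delta>0$ between the distinct copies $\psi_m(A)$, while the $d$-regularity~\eqref{eq:dregular} forces $\rho^*(A,N)\to 0$. Hence for $N$ large enough that $\rho^*(A,N)<\delta/3$, any near-optimal cover decomposes into $M$ independent covers of the subpieces, each of which is a scaled copy of the original problem, yielding the exact identity
\[
    \rho^*(A,N)\;=\;\min_{N_1+\cdots+N_M=N}\,\max_{1\leq m\leq M}\,r_m\,\rho^*(A,N_m),
\]
with an analogous identity for $\rho(A,N)$ (with the additional constraint that the configurations live in the respective subpieces). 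This is the renewal equation Lalley used in \cite{lalleyPacking1988}.

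Setting $\phi(N):=N^{1/d}\rho^*(A,N)$ and changing variables to $t:=\log N / (d\log(1/r_0))$, the recursion becomes additive with increments lying in $\Z$, and the lattice form of Feller's renewal theorem furnishes a $1$-periodic function $\Psi:\R\to\R$ such that $\phi(\lfloor r_0^{-dt}\rfloor)-\Psi(t)\to 0$ as $t\to\infty$. The main obstacle, and the crux of the argument, is ruling out that $\Psi$ is constant. Following the strategy of \cite{andrez}, I would compare $\phi$ along an ``on-lattice'' subsequence $N_k^{(1)}:=\lfloor c\,r_0^{-kd}\rfloor$ and an ``off-lattice'' subsequence $N_k^{(2)}:=\lfloor c'\,r_0^{-kd}\rfloor$ for suitably chosen constants $c,c'$. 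On $N_k^{(1)}$ the ideal split $N_m=r_m^d N$ is nearly integral, and iterating the self-similar decomposition produces a near-optimal configuration achieving a computable limiting value $L_1$. On $N_k^{(2)}$ the integrality mismatch $\lceil r_m^d N\rceil-r_m^d N$ persists through every level of iteration, and quantifying the cumulative waste shows $\phi(N_k^{(2)})\to L_2>L_1$. With $\liminf \phi<\limsup\phi$ established for both the unconstrained and constrained covering radii, Corollary~\ref{cor:nonex} immediately gives non-existence of $\lim_N \P_s^*(A,N)/N^{s/d}$ for all sufficiently large $s$.
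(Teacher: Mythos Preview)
Your framework is sound through the identification of the crux, but your plan for the decisive step differs from the paper's and is left vague exactly where the difficulty lies.

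Both you and the paper begin by reducing to $r_m=r_0^{n_m}$ and setting up a self-similar renewal recursion. The paper, however, passes to the generalized inverse $N(t):=\min\{N:\rho(A,N)\leq t\}$ rather than working with $\phi(N)=N^{1/d}\rho(A,N)$. Along the lattice scales $R_n:=N(r_0^{\,n})$ this produces the \emph{exact additive} identity
\[
R_{n_0}=\sum_{m=1}^M R_{n_0-n_m}\qquad(n_0\ \text{large}),
\]
which linearizes your $\min/\max$ recursion. With this in hand the paper bypasses the renewal theorem entirely and proves non-existence by a short induction: a constant $C>1$ is fixed from finitely many initial cases, and one shows $\rho(A,R_n-1)\geq C\,r_0^{\,n}$ for every $n$, while $\rho(A,R_n)\leq r_0^{\,n}$ by definition of $R_n$. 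The persistent jump $\rho(A,R_n-1)/\rho(A,R_n)\geq C>1$ is incompatible with existence of $\lim_N N^{1/d}\rho(A,N)$, since that limit would force consecutive ratios to tend to~$1$.

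Your route---invoke the lattice renewal theorem to obtain a periodic $\Psi$, then argue non-constancy by comparing ``on-lattice'' and ``off-lattice'' subsequences $N_k^{(1)},N_k^{(2)}$---is reasonable in outline, but the step you label ``quantifying the cumulative waste'' is the entire content of the theorem, and you give no mechanism for it. For covering radius it is not automatic that integrality defects at each level sum to a \emph{strictly} positive limiting discrepancy; the waste could in principle be $o(1)$ per level and wash out. Without a concrete lower bound on $L_2-L_1$ (or an explicit construction forcing it), this remains a gap. The paper's inductive jump argument sidesteps this issue altogether: it never needs to compute or compare limiting values, only to propagate a single strict inequality down the lattice of scales.
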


\section{Proofs}

\subsection{Polarization and covering on d-rectifiable sets}
\label{sec:proofs_recti}
We introduce some notation to prove Theorems \ref{thm:covering_asmpt} and \ref{thm:polar_asmpt} in a unified way. 
In order to deal with polarization and covering simultaneously, observe that both are functionals of the form $ f(\omega_N, A) $, taking a multiset of cardinality $ N $, $ \omega_N\subset \mathbb R^p $, and a compact set $ A\subset \mathbb R^p $ as inputs.
We can distinguish the two by the sign exponent in the corresponding rate function $ \t(N) = N^\sigma $, namely,
\[
    \begin{aligned}
        f^-(\omega_N,A) &:=  R(\omega_N,A),\qquad   \sigma = -1/d, \\
        f^+(\omega_N,A) &:=P_s(\omega_N,A),\qquad   \sigma = s/d.
    \end{aligned}
\]
Notice that the dependence of $ f $ on $ s $ and $ d $ is suppressed for brevity. These parameters will be fixed in each specific argument, so it will not lead to confusion; we always however assume $ s > d $.
The result of optimizing such $ f $ over $ N $-point multisets in $ \mathbb R^p $ will be denoted by $ \f^\pm(A,N)  $:
\[
    \begin{aligned}
        F^-(\omega_N,A) &:=\rho^*(A,N), \qquad  \sigma = -1/d, \\
        F^+(\omega_N,A) &:=\P_s^*(A,N), \qquad  \sigma = s/d.
    \end{aligned}
\]

Our strategy is a modification of that in \cite{hardinAsymptotic2021}:
we intend to show, in effect, that the functionals $ \rho^*(\omega_N,A) $ and $ \P_s^*(A,N) $ are so-called {\it short-range interactions} \cite{hardinAsymptotic2021} with rates  $ \t(N) = N^\sigma $ for $ \sigma \in \{ -1/d, s/d \} $, respectively.
To that end we will assume that functionals $ \f^\pm $ satisfy the following axioms and obtain the desired asymptotic behavior. Verification of the axioms is deferred to Section~\ref{sec:verification}. 
To state the axioms, we will need an additional piece of notation for the binary $ \min $ and $ \max operators $:
\[
    \bs\circ = \begin{cases}
        \vee, \text{ binary }\max,   &  \sigma = -1/d, \\
        \wedge, \text{ binary }\min, &  \sigma = s/d.
    \end{cases}
\]
We write $ \sgn \sigma \in \{ \pm 1 \} $ to denote the sign of the exponent $ \sigma $.
\begin{itemize}
    \item \ul{Monotonicity}: for compact sets $ A \subset B \subset \mathbb R^p $ and $ N\geq 1 $ there must hold
        \begin{equation}
            \label{eq:monotonicity}
            \sgn\sigma \cdot \f^\pm(A, N) \geqslant \sgn\sigma \cdot \f^\pm(B, N).
        \end{equation}
    \item \ul{Asymptotics on Jordan-measurable sets}: asymptotics of the considered functionals on Jordan-measurable sets $  A\subset \mathbb R^d $  exist and depend only on the volume of the set:
        \[
            \lim_{N\to \infty} \frac{\f^\pm(A,N)}{N^\sigma} = \frac{c(\f^\pm)}{\L_d(A)^\sigma}.
        \]
        In particular, the constant $ c(\f^\pm) $ is equal to the value of these asymptotics on the unit cube $ [0,1]^d $.
    \item \ul{Short-range property}: asymptotics of the functional $ \f^\pm$ over  unions of sets that are positive distance apart.
		
        Assume that the sets $A_1,A_2\subset \R^p$ are compact, $\text{dist}(A_1, A_2)=2h$ for some $h>0$, and $\overline{\m}_d(A_m)<\infty$, $ m=1,2 $. 

    {\bf (i)} For any $N$-point configuration $\omega_N$ such that $\f^\pm(A_1\cup A_2, N)=f(\omega_N, A_1\cup A_2)$, let
		$$
        N_1:=\#(\omega_N \cap \B h {A_1}), \;\;\;\;\; N_2:=\#(\omega_N \cap \B h {A_2}),
		$$
		where $\B{h}{A_m}$ is the $ h $-neighborhood of $ A_m $, as in \eqref{eq:Aofepsilon}. Then
        \begin{equation}\label{eq:stab_1}
        \liminf_{N\to\infty} \sgn \sigma \cdot \frac{\f^\pm(A_1, N_1) \bs{\circ} \f^\pm(A_2, N_2) }{\f^\pm (A_1\cup A_2, N)} \geq \sgn \sigma.
    \end{equation}
    {\bf (ii)}
    For any two sequences of positive integers $N_1$ and $N_2$ such that $N_1, N_2 \to \infty$, we have:
    \begin{equation}\label{eq:stab_2}
        \limsup_{N\to\infty} \sgn \sigma \cdot \frac{\f^\pm(A_1, N_1) \bs{\circ} \f^\pm(A_2, N_2) }{\f^\pm(A_1\cup A_2, N_1+N_2)} \leq \sgn \sigma.
    \end{equation}

    Notice that the short-range property extends to any finite number of disjoint compact sets $ A_m $, $ 1\leq m \leq M $, by induction.
\item \ul{Stability}: asymptotics of $\f^\pm(A,N)$ for $ N\to \infty $ are stable under perturbations of the set $ A $ with small changes of the Minkowski content.
        More precisely, for every $ (\h_d,d) $-rectifiable compact set $ A\subset \mathbb R^p $ with $0 < \m_d(A)=\h_d(A) < \infty$ and a given $ \epsilon >0 $, there exists $ \delta = \delta(\epsilon, s,p,d,A) $ such that 
    \begin{equation}
        \label{eq:stability}
        \sgn \sigma \cdot \Lim{N\to\infty} \frac{ \f^\pm(A,N)}{N^\sigma} \geq  
         (\sgn \sigma - \epsilon) \cdot \Lim{N\to\infty} \frac{\f^\pm(D,N)}{N^\sigma}\qquad \text{for\ Lim}\in\{ \liminf,\ \limsup \},
    \end{equation}
    whenever the compact $ D \subset A $ satisfies $ \h_d(D) > (1 - \delta )\,\h_d(A)$. While it is not necessary in general, we shall further use that for $p=d$, $ \delta $ can be chosen independently of $ A $, since this holds for both $ \f^\pm $ and will simplify our proofs.
\end{itemize}
We first prove Theorems \ref{thm:covering_asmpt} and \ref{thm:polar_asmpt} assuming the above axioms hold. We start with the following lemma.
\begin{lemma}
    \label{lem:small_covering}
    Suppose $ A\subset \mathbb R^p $ is a compact set, $d\leqslant p$, and $ 0 < \underline{\m}_d(A) \leq \overline{\m}_d(A) < \infty $. Then for $ N \geq N_0 $,
    \begin{itemize}
        \item $\rho^*(A,N) \leq C_1 N^{-1/d}$;
        \item $\P_s^*(A,N) \geq C_2 N^{s/d}$ 
        \item $\rho^*(A,N) \geq C_3 N^{-1/d}$.
    \end{itemize}
    with constants $ N_0 $ and $ C_i $ depending on $ s,p,d,A $ only, $ 1\leq i \leq 3 $.
\end{lemma}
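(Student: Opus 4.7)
The plan is to prove the three bounds in turn, deriving the polarization lower bound as a direct corollary of the covering upper bound. Before tackling any of them, I would record the following consequence of the hypothesis $0<\underline{\mathcal{M}}_d(A)\leq \overline{\mathcal{M}}_d(A)<\infty$: by the definitions of $\liminf$ and $\limsup$, there is an $r_0=r_0(A)>0$ such that for every $0<r\leq r_0$,
\[
    \tfrac{1}{2}\,\underline{\mathcal{M}}_d(A)\,v_{p-d}\,r^{p-d}\;\leq\;\mathcal{L}_p(B_r(A))\;\leq\;2\,\overline{\mathcal{M}}_d(A)\,v_{p-d}\,r^{p-d}.
\]
All three bounds will follow from this two-sided volume estimate, with the threshold $N_0$ determined by $r_0$.

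For the covering upper bound, I would produce a covering using a maximal $r$-separated subset $\omega_r\subset A$. The open balls $B_{r/2}(x)$, $x\in \omega_r$, are pairwise disjoint and contained in $B_r(A)$, hence
\[
    \#\omega_r\cdot v_p(r/2)^p\;\leq\;\mathcal{L}_p(B_r(A))\;\leq\;2\overline{\mathcal{M}}_d(A)\,v_{p-d}\,r^{p-d},
\]
so $\#\omega_r\leq C\,r^{-d}$. Choosing $r=c_0 N^{-1/d}$ with $c_0=c_0(A,p,d)$ large enough forces $\#\omega_r\leq N$; padding $\omega_r$ with arbitrary points of $A$ and using maximality ($A\subset B_r(\omega_r)$), we obtain $\rho^*(A,N)\leq \rho(A,N)\leq C_1 N^{-1/d}$. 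The polarization lower bound is then essentially free: take such a configuration $\omega_N$ with $\rho^*(A,\omega_N)\leq C_1 N^{-1/d}$; for every $y\in A$ there exists $x\in \omega_N$ with $\|y-x\|\leq C_1 N^{-1/d}$, so
\[
    \sum_{x\in\omega_N}\|y-x\|^{-s}\;\geq\;\|y-x\|^{-s}\;\geq\;C_1^{-s}N^{s/d},
\]
and taking the infimum over $y$ gives $\mathcal{P}_s^*(A,N)\geq C_2 N^{s/d}$ with $C_2=C_1^{-s}$.

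For the covering lower bound, the idea is a standard inflation argument. If $\omega_N\subset\mathbb{R}^p$ satisfies $A\subset B_r(\omega_N)$, then $B_r(A)\subset B_{2r}(\omega_N)=\bigcup_{x\in\omega_N}B_{2r}(x)$, whence $\mathcal{L}_p(B_r(A))\leq N\,v_p(2r)^p$. Combined with the lower Minkowski estimate, valid for $r\leq r_0$,
\[
    \tfrac{1}{2}\,\underline{\mathcal{M}}_d(A)\,v_{p-d}\,r^{p-d}\;\leq\;N\,v_p\,2^p\,r^p,
\]
which rearranges to $r\geq C_3 N^{-1/d}$. Since $r=\rho^*(A,\omega_N)$ is already $O(N^{-1/d})$ by the first bullet, the constraint $r\leq r_0$ is satisfied for $N\geq N_0$, so taking the infimum over $\omega_N$ yields $\rho^*(A,N)\geq C_3 N^{-1/d}$. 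The only delicate point in the whole argument is bookkeeping the threshold: one must ensure that the chosen $r=\rho^*(A,N)$ (or $r=c_0 N^{-1/d}$) lies in $(0,r_0]$, which amounts to taking $N_0=N_0(A,s,p,d)$ large enough depending on $r_0$ and the constants $C_1,C_3$.
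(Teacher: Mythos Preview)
Your proof is correct and follows essentially the same approach as the paper: a maximal $r$-separated set plus volume comparison for the covering upper bound and the polarization lower bound, and the inflation $B_r(A)\subset B_{2r}(\omega_N)$ for the covering lower bound. Your presentation is in fact slightly cleaner than the paper's, since you use the uniform two-sided Minkowski estimate for all $r\leq r_0$ directly, whereas the paper works along special sequences $r_n\downarrow 0$ and then has to match $N$ to $r_n$.
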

\begin{proof}
    Fix an $ \epsilon > 0 $.
    Let $ M > 0 $ and a sequence $ r_n\downarrow 0 $, $ n\geq 1 $, be such that 
    $$ M = \lim_{n\to \infty} \L_p( \B{r_n}{A} ) / v_{p-d}{r_n}^{p-d}. $$
    Then, for some $n_0=n_0(\epsilon, p,d,A)$ we have, for any $ n > n_0$,
    \[
        \L_p(\B{r_n}{A}) \leq (M + \epsilon) v_{p-d}{r_n}^{p-d}.
    \]
    For every $n > n_0$, consider the maximal $r_n$-separated subset $ \omega_k $ in $ A $.  
    Given $ x,y\in \omega_k $, we have $ \mathcal L_p [\B{r_n/2}{x} \cap \B{r_n/2}{y}] =0 $, and $\B{ r/2}{x} \subset \B{r/2}{A}$, implying, for $ n > n_0$,
    \[
        k\leqslant \frac{\sli_{x\in \omega_k} \L_p(\B{{r_n}/2}{x})}{v_p ({r_n}/2)^p} = \frac{\L_p\left( \bigcup_{x\in \omega_k} \B{ {r_n}/2}{x} \right)}{v_p ({r_n}/2)^p} \leqslant \frac{\L_p(\B{{r_n}/2}{A})}{v_p ({r_n}/2)^p}\leqslant c(p,d) \left(M+\epsilon\right) {r_n}^{-d}.
    \]
    Now pick $ N=N(n) $ so that 
    \[
        \left(\frac{N}{c(p,d) (M+\epsilon)}\right)^{-1/d} \leq r_n < \left(\frac{N-1}{c(p,d) (M+\epsilon)}\right)^{-1/d}.
    \]
    By the above discussion, for $ n > n_0 $ there holds $k\leqslant N$; in addition, the maximality of $\omega_k$ gives that 
    $$
    A\subset \bigcup_{x\in \omega_k} \B{r_n}{x}.
    $$
    Thus, we covered $A$ by $k\leq N$ balls of radius $r_n$, so that there holds $\rho^*(A, N)\leqslant \rho^*(A, k)\leqslant r_n \leq c(\epsilon,p,d,M) (N-1)^{-1/d}$, and the first part of the lemma follows by setting e.g. $ \epsilon = 1 $ and $ M = \overline{\m}_d(A) $. To prove the second part of the lemma, we notice that for $ N $ as above,
    $$
    \P^*_s(A, N)\geqslant \P_s^*(A, k) \geqslant P_s(\omega_k, A) \geq r_n^{-s} = C_2 N^{s/d},
    $$	
    where the last inequality follows from the fact that $\omega_n$ covers $A$ with balls of radius $r$.

    For the third part, again consider $ \underline{\m}_d(A) \leq  M \leq \overline{\m}_d(A) $ and sequence $ \{ r_n \} $ as above; recall that  $ \underline{\m}_d(A) > 0 $ and fix $ 0< \epsilon < \underline{\m}_d(A) $.
    Assume again that for some $n_0 = n_0(\epsilon,p,d,A)$, whenever $ n> n_0$, there holds
    \[
        \L_p(\B{{r_n}}{A}) \geq (M-\epsilon)  v_{p-d}{r_n}^{p-d}.
    \]
    Choose $ N = N(n) $ so that 
    \[
        \rho^*(A,N) \leq r_n < \rho^*(A,N-1).
    \]
    Denote $ \rho = \rho^*(A,N) $ and pick a configuration $ \omega_N $ attaining the covering radius $ \rho $; since $ A \subset \B {\rho}{\omega_N} \subset \B {r_n}{\omega_N} $, there holds $  \B {r_n} A \subset \B {2r_n}{\omega_N} $ and
    \[
        N\cdot 2^pv_p \cdot r_n^p \geq \L_p(\B{r_n}{A}) \geq (M-\epsilon)  v_{p-d}r_n^{p-d}, \qquad n \geq n_0,
    \]
    implying
    \[
        N\rho^*(A,N-1)^d \geq Nr_n^d \geq c(p,d) (M-\epsilon).
    \]
    The desired estimate follows by taking $ M = \underline{\m}_d(A) $ and $ \epsilon =  \underline{\m}_d(A)/2$.
\end{proof}
By the same argument as the above lemma, we have also the following related fact, to be used in the proof of stability in Lemma~\ref{lem:stability}.
\begin{lemma}
    \label{lem:covering_via_packing}
    Suppose $ A\subset \mathbb R^p $ is a compact set with $ 0 < \overline{\m}_d(A) < \infty  $; then for any $ r \leq r_0(A) $ there exists a covering configuration $ \omega_n \subset A $ with radius $ r $, consisting of $ n \leq c(p,d) r^{-d} \overline{\m}_d(A) $ points.
\end{lemma}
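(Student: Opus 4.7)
The plan is to reuse the packing/covering argument from the proof of Lemma~\ref{lem:small_covering}, but now treating the radius $r$ as the free parameter (rather than choosing $N$ as a function of a subsequence $r_n$). A maximal $r$-separated subset $\omega_n \subset A$ simultaneously yields a covering (by maximality, every point of $A$ lies within distance $r$ of some element of $\omega_n$) and a packing (the balls $B_{r/2}(x)$, $x \in \omega_n$, are pairwise disjoint). So the only task is to bound $n$ above by a constant multiple of $r^{-d} \overline{\m}_d(A)$ using the packing inequality.

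First I would fix the threshold. Since
\[
    \overline{\m}_d(A) = \limsup_{r\downarrow 0} \frac{\L_p(\B r A)}{v_{p-d}\, r^{p-d}},
\]
there exists $r_0 = r_0(A) > 0$ such that for every $0 < r \leq r_0$,
\[
    \L_p(\B{r/2}{A}) \leq 2\,\overline{\m}_d(A)\, v_{p-d}\, (r/2)^{p-d}.
\]
(The factor $2$ is just a convenient slack over the $\limsup$; any constant strictly greater than $1$ would do and can be absorbed into $c(p,d)$.)

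Next, for such an $r$, let $\omega_n \subset A$ be a maximal $r$-separated subset. Disjointness of the half-radius balls together with $\bigcup_{x\in\omega_n} \B{r/2}{x} \subset \B{r/2}{A}$ gives
\[
    n\, v_p\, (r/2)^p \;=\; \L_p\!\left( \bigcup_{x\in \omega_n}\B{r/2}{x}\right) \;\leq\; \L_p(\B{r/2}{A}) \;\leq\; 2\,\overline{\m}_d(A)\, v_{p-d}\, (r/2)^{p-d},
\]
so that
\[
    n \;\leq\; \frac{2 v_{p-d}}{v_p}\, (r/2)^{-d}\, \overline{\m}_d(A) \;=\; c(p,d)\, r^{-d}\, \overline{\m}_d(A),
\]
with $c(p,d) = 2^{d+1} v_{p-d}/v_p$. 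By maximality of $\omega_n$, the balls $\B r x$, $x\in\omega_n$, cover $A$, so $\omega_n$ is a covering configuration of radius $r$ with the required cardinality.

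There is no real obstacle here: the argument is the same volume-comparison technique already used in Lemma~\ref{lem:small_covering}, and the only subtlety is the mild one of selecting $r_0(A)$ small enough for the $\limsup$-bound on $\L_p(\B{r/2}{A})$ to be valid.
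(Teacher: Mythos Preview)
Your proof is correct and follows essentially the same maximal-separated-set/volume-comparison argument as the paper. The only difference is cosmetic: you enclose the half-radius balls in $\B{r/2}{A}$ rather than the paper's $\B{r}{A}$, which yields the slightly sharper constant $c(p,d)=2^{d+1}v_{p-d}/v_p$ instead of $2^{p+1}v_{p-d}/v_p$; also, strictly speaking the closed balls $\B{r/2}{x}$ may touch when $\|x-y\|=r$, so ``pairwise disjoint'' should read ``have $\L_p$-null pairwise intersections,'' but this does not affect the volume estimate.
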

\begin{proof}
    Let $ r_0 = r_0(A) $ be such that $ \mathcal L_p(B_r(A)) < 2v_{p-d}r^{p-d}\overline{\m}_d(A) $ for $ r \leq r_0 $.
    The set $ \omega_n\subset A $ is constructed as a subset of the maximal cardinality $ n $, for which whenever $x,y\in \omega_n$ we have $|x-y|\geqslant r$. Then, by maximality of $\omega_n = \{ x_i \}_1^n $, there holds $ A \subset B_r(\omega_n) $, and also, by definition,
    \[
        \mathcal L_p \left(\B{r/2}{x} \cap \B{r/2}{y}\right) = 0, \qquad x\neq y,\ x,y \in \omega_n.
    \]
    Taking into account that 
    \[
        \bigcup_{x\in \omega_n} \B{r/2}{x} \subset \B{r}{A};
    \]
    we conclude for $ N\geq N_0(A,D,\delta) $,
    \begin{equation}
        \begin{aligned}
        n \leq \frac{\L_p[\B{r}{A}]}{v_p (r/2)^p} 
        &\leq
        \frac{2\,v_{p-d} r^{p-d}\,\overline{\m}_d(A) }{v_p (r/2)^p} 
        = 2^{p+1} r^{-d} \frac{v_{p-d}}{v_p} \, \overline{\m}_d(A).
    \end{aligned}
\end{equation}
\end{proof}
We now obtain our first main result for unconstrained covering and polarization.
\begin{proof}[Proof of Theorems~\ref{thm:covering_asmpt} and~\ref{thm:polar_asmpt} for $\rho^*$ and $\P_s^*$.]
    We first establish the result for $d=p$. Recall that restricted to compact sets in $ \mathbb R^d $, $ \m_d = \h_d = \L_d $. Consider the case $ F^- = \rho^* $, so that $ \sigma = -1/d $; fix an $ \epsilon > 0 $ and a compact set $ A\subset \mathbb R^d $. It suffices to assume $ \h_d(A) > 0 $, since otherwise Theorem~\ref{thm:graf_luschgy} applies. For $ \delta = \delta(\epsilon, d) $ as in the stability property, let $ J_\epsilon \supset A $ be a finite union of closed dyadic cubes, such that $  \L_d(A) > (1 - \delta )\L_d(J_\epsilon) $; then $ J_\epsilon $ is Jordan-measurable. Note that $ \delta $ is chosen uniformly in $ J_\epsilon $ and only depends on $ \ep $ and $ d $. Without loss of generality, $ \delta \leq \epsilon < 1 $. On the one hand, monotonicity property together with asymptotics on $ J_\epsilon $ give, with $\theta_d$ as in Theorem \ref{thm:graf_luschgy}:
    \[
        \limsup_{N\to \infty} N^{1/d}\rho^*(A, N) \leq  \limsup_{N\to \infty} N^{1/d}\rho^*(J_\ep, N) = \theta_d \L_d(J_\epsilon)^{1/d} \leq (1-\epsilon)^{-1/d} \theta_d\L_d(A)^{1/d}
    \]
    On the other, by the choice of $ J_\epsilon $ and \eqref{eq:stability} with $ \sgn \sigma = -1 $ and $ \Lim{} = \liminf $,
    \[
       \theta_d \L_d(A)^{1/d} \leq \theta_d \L_d(J_\epsilon)^{1/d} =\liminf_{N\to \infty} N^{1/d}\rho^*(J_\ep, N)\leq  
         (1 + \epsilon) \liminf_{N\to\infty}  N^{1/d}\rho^*(A, N).
    \]
    By making $ \epsilon > 0 $ arbitrary small, we conclude
    \[
            \lim_{N\to \infty} N^{1/d}\rho^*(A, N) = \theta_d \L_d(A)^{1/d}.
    \]
    In the case $ F^+ = \mathcal P_s^* $, $ \sigma = s/d $, the same argument gives
    \[
        \limsup_{N\to\infty} (1 - \epsilon) \cdot \frac{\mathcal P_s^*(A,N)}{N^\sigma} \leq \lim_{N\to\infty}  \frac{\mathcal P_s^*(J_\epsilon,N)}{N^\sigma} \leq \liminf_{N\to\infty}  \frac{\mathcal P_s^*(A,N)}{N^\sigma},
    \]
    which completes the proof when $ A $ is a compact subset of $ \mathbb R^d $.

    Suppose now $ A\subset\mathbb R^p $ is an $ (\h_d,d) $-rectifiable compact set with $ \h_d(A) = \m_d(A) > 0 $ and fix an $ \epsilon > 0 $. Take $ \delta = \delta(\epsilon, s,p,d,A) $ to be from the stability property. By a result of Federer (see \cite[Thm. 3.2.18]{federerGeometric1996} or \cite[Proposition 2.76]{ambrosioFunctions2000}), there exist compact sets $ K_1, \ldots, K_M \subset \mathbb R^d $ and bi-Lipschitz maps $ \psi_m:K_m\to \mathbb R^{p} $, $ 1\leq m \leq M $ with constant $ 1+\epsilon $, such that $ \psi_m(K_m) \subset  A $ are disjoint and
    $$ \mathcal H_d\left( A\setminus \bigcup_{m=1}^M \psi_m(K_m)\right) < \delta, $$
   Denote
    \[
        \wt A = \bigcup_{m=1}^M \psi_m(K_m),
    \]
    a compact $ (\h_d,d) $-rectifiable set, which satisfies $ \h_d(\wt A) = \m_d(\wt A)$ (see \cite[Lemma 4.3]{borodachovLow2014}). Without loss of generality, sets $ A_m := \psi_m(K_m) $ satisfy $ \h_d(A_m) > 0 $. In particular, this implies that $\m_d(A_m)=\h_d(A_m)>0$ for every $m=1,\ldots,M$. We need this estimate to utilize the short-range properties of $\f^\pm$. 

    Let us first consider the covering functional, $ \f^- = \rho^* $, so that $ \sigma =-1/d $. 
		Take a sequence $\mathcal{N}$ that attains the $\liminf_{N\to \infty} \rho^*(\tilde{A}, N)N^{1/d}$. For every $N\in \mathcal{N}$, pick a configuration $\omega_N^*$ attaining $\rho^*(A, N)$. Since the sets $A_1, \ldots, A_m$ are compact and metrically separated, we have
		$$
		h:=4\min_{j\not=k}(\text{dist}(A_j, A_k)) > 0.
		$$
		Denote $N_m:=\#(\omega_N^*\cap \B h {A_m})$. Since $m=1,\ldots,M$ has a finite range, we can pass to a subsequence of $ \mathcal N $, to ensure that the limits
		$$
		\beta^*_m:=\lim_{N\to \infty} \frac{N_m}N
		$$
		exist for every $m=1,\ldots, M$.
        By the short-range property of $\f^-=\rho^*$, we have
   		\[
        \begin{aligned}
            &\liminf_{N\to \infty} \frac{\rho^*(\wt A,N)}{N^{-1/d}} = \lim_{N\in \mathcal{N}}\frac{\rho^*(\wt{A}, N)}{\max_{m} \rho^*(A_m, N_m)} \cdot \frac{\max_{m} \rho^*(A_m, N_m)}{N^{-1/d}}  \\
           \geqslant & \liminf_{N\to \infty} \frac{\max_m \rho^*(A_m, N_m)}{N^{-1/d}} \geqslant \max_m \liminf_{N\to \infty}\frac{\max_m \rho^*(A_m, N_m)}{N_m^{-1/d}} \cdot \frac{N_m^{-1/d}}{N^{-1/d}}\\
        &\geq \max_m \left((\beta_m^*)^{-1/d}\liminf_{N\to \infty}  \frac{ \rho^*(A_m, N_m)}{N_m^{-1/d}}\right).
				\end{aligned}
				\]
Recall that $A_m = \psi_m(K_m)$, where $K_m\subset \R^d$ is a compact set, and $\psi_m$ is a $(1+\ep)$-bi-Lipschitz map, and so
        \[
        \begin{aligned}
            &\liminf_{N\to \infty} \frac{\rho^*(\wt A,N)}{N^{-1/d}}\\
						&\geq \frac1{1+\epsilon} \max_m \left((\beta_m^*)^{-1/d}\liminf_{N\to \infty}  \frac{ \rho^*(K_m, N_m)}{N_m^{-1/d}}\right)\\
        &= \frac1{1+\epsilon} \max_m \left((\beta_m^*)^{-1/d} \theta_{d} \L_d(K_m)^{1/d}\right) \\
        &\geq \frac{\theta_{d}}{(1+\epsilon)^2} \max_m \left((\beta_m^*)^{-1/d}  \h_d(A_m)^{1/d}\right). \\
        \end{aligned}
    \]
		
We finally notice that $N_1+\ldots+N_M \leqslant N$, and so 
$$
\sli_{m=1}^M \beta_m^* \leqslant 1 = \sli_{m=1}^M \frac{\h_d(A_m)}{\h_d(\tilde{A})}.
$$
Thus, for some $m$ we have $\beta_m^* \leqslant \h_d(A_m)/\h_d(\tilde{A})$, and therefore
$$
  \liminf_{N\to \infty} \frac{\rho^*(A,N)}{N^{-1/d}} \geqslant\liminf_{N\to \infty} \frac{\rho^*(\wt A,N)}{N^{-1/d}} \geqslant \frac{\theta_{d}}{(1+\epsilon)^2} \h_d(\tilde{A})^{1/d}.
$$
Since $\ep$ can be made arbitrarily small, we obtain
$$
 \liminf_{N\to \infty} \frac{\rho^*(A,N)}{N^{-1/d}} \geqslant \theta_d \h_d(A)^{1/d}.
$$

\noindent To finish our proof for the covering radius, it is now enough to show that
$$
\limsup_{N\to \infty} \frac{\rho^*(A,N)}{N^{-1/d}}\leqslant \theta_d \h_d(A)^{1/d}.
$$
  Indeed, take the same set $\wt{A}$, and for every $N$ fix numbers $N_1, \ldots, N_M$ with $N_1+\ldots+N_M=N$ and such that
	$$
	\beta_m:=\lim_{N\to \infty} \frac{N_m}N = \frac{\h_d(A_m)}{\h_d(\wt{A})}.
	$$ 
	Then by the short-range property, we have
    \[
        \begin{aligned}
      \limsup_{N\to \infty} \frac{\rho^*(\wt A,N)}{N^{-1/d}} 
                       &\leq\limsup_{N\to \infty} \frac{\rho^*(\wt A,N)}{\max_m \rho^*(A_m, N_m)} \cdot \frac{\max_m \rho^*(A_m, N_m)}{N^{-1/d}}\\
            &= \limsup_{N\to \infty}\frac{\max_m \rho^*(A_m, N_m)}{N^{-1/d}}\\
						&\leq \max_m\left( \limsup_{N\to\infty} \frac{\rho^*(A_m, N_m)}{N^{-1/d}}\right)\\
						&=\max_m \left( \beta_m^{-1/d}\limsup_{N\to\infty} \frac{\rho^*(A_m, N_m)}{N_m^{-1/d}}\right)\\
            &\leq (1+\epsilon)\max_m \left(\beta_m^{-1/d} \limsup_{N\to \infty} \frac{ \rho^*(K_m, N_m)}{N_m^{-1/d}} \right)\\
            &= (1+\epsilon) \max_m \left(\beta_m^{-1/d} \theta_{d} \L_d(K_m)^{1/d}\right) \\
            &\leq {(1+\epsilon)^2}{\theta_{d}} \max_m \left(\beta_m^{-1/d}  \h_d(A_m)^{1/d}\right) \\
						&= (1+\ep)^2 \theta_d \h_d(\wt{A})^{1/d} \\
						&\leq(1+\ep)^2 \theta_d \h_d(A)^{1/d}.
        \end{aligned}
    \]
 It remains to recall that the choice of $\wt{A}$ was defined by the stability property, and therefore
$$
 \limsup_{N\to \infty} \frac{\rho^*(A,N)}{N^{-1/d}} \leqslant  (1+\ep)\limsup_{N\to \infty} \frac{\rho^*(\wt A,N)}{N^{-1/d}} \leqslant (1+\ep)^3 \theta_d \h_d(A)^{1/d}.
$$   
Since $\ep$ can be made arbitrarily small, our proof for the covering radius is finished.

\bigskip

In the case $\sigma=s/d$; i.e., the polarization $\P^*_s$, the proof is the same and we therefore omit it here.
\end{proof}

As a consequence of the asymptotics of optimal covering/polarization, obtained above, we deduce the uniformity of asymptotic distribution for configurations $ \omega_N $ achieving the optimal constant in the limits of $ N^{1/d R(\omega_N,A)} $ and $ P_s(A,\omega_N) / N^{s/d} $, respectively. 

\begin{proof}[Proof of Theorem~\ref{thm:distr}.]
    We shall give the proof of uniform distribution for asymptotically optimal covering configurations; the proof for optimal polarization is similar. Let $ \omega_N $, $ N\geq 1 $, be a sequence of $ N $-point configurations as in the statement of the theorem, and let $ \mu $ be a cluster point of the corresponding empirical probability measures. That is, for some subsequence $\mathcal{N}\subset \mathbb{N}$ we have
		$$
		\frac{1}{N}\sli_{x\in \omega_N} \delta_x \weakto \mu, \;\;\; N\to \infty, \;\;\; N\in \mathcal{N}.
		$$
In what follows, we always assume $N\subset \mathcal{N}$. 

    We begin the proof by showing that $ \mu \ll \h_d $. Indeed, otherwise there exists a closed $ D\subset A $ such that $ \h_d(D) = 0 $, but $ b:= \mu(D) > 0 $. As a result, for any $ \epsilon > 0 $ there are closed neighborhoods $ E_1 := A\cap B_r(D) $ and $ E_2 := A\cap B_{2r}(D) $ with $ r>0 $, such that $ \mu(\partial_A E_1) = 0 $, where $ \partial_A $ denotes the boundary relative to $ A $, and such that $ \h_d(E_2) < \epsilon $. 
		Condition $\h_d(\partial_A E_1)=0$ implies, by definition of weak$^*$ convergence, that  
		\[
        \lim_{N\to \infty} \frac{\#(\omega_N \cap A\setminus E_1)}N =\mu(A\setminus E_1) = 1-\mu(E_1) \leq 1-b.
    \]
		Since for $ N $ large enough we have $ R(\omega_N,A)\leqslant C N^{-1/d} $ is small, Lemma~\ref{lem:covering_via_packing} implies that there is a collection $ \omega_n $ for set $ E_2 $ with covering radius $ R(\omega_N,A) $ and cardinality
    \[
        n \leq c_1(p,d,A) {R(\omega_N,A)}^{-d} \h_d(E_2) \leqslant \epsilon c_2(p,d, A) N .
    \]
       
    We define a new sequence of configurations(of cardinality not necessarily equal to $N$):
    \[
        \omega^{(N)} := (\omega_N \setminus E_1) \cup \omega_n.
    \]
    For $N$ large enough, we have $R(\omega_N, A)<r$, which implies that $\omega^{(N)}$ covers $A$ with radius at most $R(\omega_N, A)$. 
   However, cardinalities of these new configurations are such that 
	$$
	\lim_{N\to\infty}\frac{\#\omega^{(N)}}N \leqslant 1 - b + c(p,d,A)\epsilon < 1
	$$
	for $ \epsilon $ sufficiently small. We have therefore found configurations with cardinalities at most a fraction of $ N $, but with the same covering radius. This contradicts the assumption of asymptotic optimality of $ \omega_N $.

    To prove that the measure $ \mu $ is uniform on $ A $ with respect to $ \h_d $, we again argue by contradiction. Let $ \phi = d\mu/d\h_d $; suppose it is not constant $ \h_d $-a.e. As a result, there exists a pair of distinct points $ x_1, x_2 \in A $, such that 
    \[
        \lim_{r\downarrow 0} \frac{\mu\left(B_r(x_m)\right)}{\h_d\left(B_r(x_m)\cap A\right)} = \phi_m, \qquad m=1,2, \qquad \phi_1 < \phi_2.
    \]
    Then for any $ \epsilon > 0 $ there exists a pair of radii $ r_1, r_2 $, for which $ r_1+r_2 < \|x_1-x_2\| $ and
    \[
        \left|\frac{\mu\left(B_{r_m}(x_m)\right)}{\h_d\left(B_{r_m}(x_m)\cap A\right)} - \phi_m\right| < \epsilon.
    \]
    Let $ \epsilon < (\phi_2-\phi_1)/4 $ and $ B_m :=  B_{r_m}(x_m)$, $ m=1,2 $ be a pair of such closed balls; without loss of generality also $ \h_d\left(\partial_A B_m\right) = 0 $, $ m=1,2 $. Consequently, there is another pair of radii $ t_m < r_m $, such that for  $ \tilde B_m := B_{t_m}(x_m)  $ there holds $ \h_d(\partial_A \tilde B_m) =0 $ and
    \begin{equation}
        \label{eq:ball_in_a_ball}
        \left|\frac{\mu\left(B_m\right)}{\h_d(\tilde B_m\cap A)} - \phi_m\right| < \epsilon 
        \qquad \text{and} \qquad
        \left|\frac{\mu(\tilde B_m)}{\h_d\left(B_m\cap A\right)} - \phi_m\right| < \epsilon,\quad m=1,2.
    \end{equation}
    Since the sets $ \tilde B_m $ and $ A\setminus (B_1\cup B_2) $ are positive distance apart, the short-range property \eqref{eq:stab_1} gives for a sufficiently large $ N_0 $,
    \begin{equation}
        \label{eq:proving_dist}
        \rho(A,N)^d \geq \rho\left( \tilde B_1\cap A,\,\lfloor \mu\left(B_1\right)N \rfloor\right)^d, \qquad N \geq N_0,
    \end{equation}
     as the fraction of points in $ \omega_N $ within distance $ r_m - t_m > 0 $ from  $ \tilde B_m $ is at most $ \mu\left(B_m\right) $, $ m=1,2 $. Applying Theorem~\ref{thm:covering_asmpt} to the set $ \tilde B_1\cap A $ results in
    \[
        \liminf_{N\to \infty} (\mu\left(B_1\right) N)\, \rho\left(\tilde B_1\cap A,\, \lfloor \mu\left(B_1\right)N \rfloor\right)^d \geq \theta_d^d \h_d\left(\tilde B_1\cap A\right),
    \]
    whence, dividing through by $ \mu(B_1) $, one has from \eqref{eq:ball_in_a_ball} and \eqref{eq:proving_dist}
    \[
        \lim_{N\to \infty} N \cdot \rho(A,N)^d \geq \limsup_{N\to \infty} N \rho\left( \tilde B_1\cap A,\,\lfloor\mu\left(B_1\right)N \rfloor\right)^d \geq \theta_d^d/(\phi_1+\epsilon).
    \]

    The last inequality controls how good the covering by a non-uniformly distributed sequence $ \omega_N $ can be. It remains to show that, because there is a large number of points in $ \omega_N \cap \tilde B_2 $, some of them can be removed without making the local covering worse than in the bound we just obtained.
    Indeed, by Theorem~\ref{thm:covering_asmpt}, using a collection $ \omega_n $ of cardinality $ n(N) := \lfloor (\phi_1+2\epsilon)\h_d\left(B_2\cap A\right) N \rfloor $, the set $ B_2 \cap A $ can be covered in an optimal fashion, to achieve the covering radius satisfying
    \[
        \lim_{N\to \infty} n(N) \, \rho(B_2\cap A,\ n(N))^d = \theta_d^d\h_d\left(B_2\cap A\right),
    \]
    whence 
    \[
        \lim_{N\to \infty} N\, \rho(B_2\cap A,\ n(N))^d \leq \theta_d^d/(\phi_1 + 2\epsilon) < \lim_{N\to \infty} N \cdot \rho(A,N)^d.
    \]
    Replacing $ \omega_N \cap \tilde B_2 $ with the optimal covering configuration for $ A\cap B_2 $ results in a configuration of the form
    \[
        \omega^{(N)} = (\omega_N\setminus \tilde B_2) \cup \omega_n.
    \]
    Using the short-range property~\eqref{eq:stab_2}, we see that $ \omega^{(N)} $ has the same covering radius as $ \omega_N $ on $ \omega_N\setminus B_2 $ for large $ N $, and its covering radius on $ B_2 $ has asymptotics smaller than those of $  N^{1/d} \rho(A,N) $, hence $ \omega^{(N)} $ is asymptotically optimal on $ A $.

    In addition, by \eqref{eq:ball_in_a_ball} and the definition of $ n(N) $, the fraction $ n(N)/N $ of points required for covering of $ B_2 $ in $ \omega^{(N)} $ is strictly smaller than the asymptotic fraction $ \mu(\tilde B_2) $ of points from $ \omega_N $ contained in $ \tilde B_2 $. Thus,
    the difference of cardinalities of $ \omega^{(N)} $ and $ \omega_N $ is a positive fraction of $ N $:
    \[
        N \cdot \left(\mu(\tilde B_2) - \frac{n(N)}{N} \right) > 0,
    \]
    which contradicts the asymptotic optimality of $ \omega_N $ assumed in the theorem. We conclude that $ \mu $ must be uniform with respect to $ \h_d $ on $ A $.
\end{proof}

Before we turn to the proof of the following theorem, we shall introduce a simple observation about the set neighborhoods $ \B r A $. Namely, given two compact sets $ E,F\subset \mathbb R^p $ and positive numbers $ \alpha, r > 0 $, there holds
\begin{equation}
    \label{eq:inclusion}
    \B {r} E\setminus \B {(1+\alpha) r} {F} \subset \B {r} {E\setminus \B {\alpha r} {F}}.
\end{equation}
Indeed, pick an element $ x \in \B {r} E\setminus \B {(1+\alpha) r} {F} $; then for some $ y\in E $, $ \|x-y\|\leq r $. Furthermore, since $ \dist (x,{F}) > (1+\alpha)r $, $ \dist (y,{F}) > \alpha r $. It follows $ y\in E\setminus \B {\alpha r} F $, and therefore $ x \in \B {r} {E\setminus \B {\alpha r} {F}} $ as desired.

\begin{proof}[Proof of Theorem~\ref{thm:minkisbigger}]
    Clearly, to prove the first pair of inequalities in the statement of the theorem, it suffices to show
    \[
        \limsup_{N\to \infty}N^{1/d} \rho^*(A, N) > \theta_d \H_d(A)^{1/d}.
    \]
    Furthermore, the second claim of the theorem follows from Corollary~\ref{cor:limiting_polar}. We will therefore focus on proving the above inequality for $ \rho^* $.

    Fix an $ \epsilon > 0 $.
    As in the preceding proof of this section, an application of \cite[Thm. 3.2.18]{federerGeometric1996} gives existence of a compact set $ \wt A\subset A $, such that  $ \wt A $ is $ d $-rectifiable and $ \m_d(\wt A) = \h_d(\wt A) \geq \h_d(A) -\epsilon $. Consequently, Theorem~\ref{thm:covering_asmpt} applies to $ \wt A $; by the theorem we have
    \begin{equation}
        \label{eq:covering_tilde}
            \lim_{N\to \infty} N \rho^*(\wt A,N)^d = \theta_d^d \h_d(\wt A) \geq\theta_d^d (\h_d(A) - \epsilon).
    \end{equation}
    We shall further need to characterize $ A\setminus \wt A $. By the definition of Minkowski content, there is a sequence of positive numbers $ r_n\downarrow0 $, $ n\geq 1 $, such that along this sequence, 
    \[
        \lim_{n\to \infty}\frac{\mathcal L_p[\B {r_n} A]}{v_{p-d}r_n^{p-d}}= \overline{\m_d}(A), \qquad \lim_{n\to \infty}\frac{\mathcal L_p[\B {r_n} {\wt A}]}{v_{p-d}{r_n}^{p-d}} = \h_d(\wt A). 
    \]
    In particular, there exists $ n_0(\epsilon, A,\wt A) $ so large that
    \[
        \frac{\mathcal L_p[\B {r_n} A]}{v_{p-d}{r_n}^{p-d}}\geq \overline{\m_d}(A)-\epsilon, \qquad \left|\frac{\mathcal L_p[\B {r_n} {\wt A}]}{v_{p-d}{r_n}^{p-d}} - \h_d(\wt A)\right| \leq \epsilon, \quad n \geq n_0,
    \]
    implying for every $ \alpha > 0 $ sufficiently small
    \begin{equation}
        \label{eq:lower_minkowski_difference}
        \begin{aligned}
            \mathcal L_p[\B {r_n} A\setminus \B {(1+\alpha) r_n} {\wt A}] 
            &\geq \mathcal L_p[\B {r_n} A] - \mathcal L_p[\B {(1+\alpha) r_n} {\wt A}] \\
            &\geq v_{p-d} {r_n}^{p-d} (\overline{\mathcal M_d}(A) - (1+\alpha)^{p-d} \mathcal H_d(\wt A) -2 \epsilon)\\
            &\geq v_{p-d} {r_n}^{p-d} (\overline{\mathcal M_d}(A) - (1+\alpha)^{p-d} \mathcal H_d(A) - 3 \epsilon).
    \end{aligned}
    \end{equation}
    To apply the above estimate, consider a covering set $ \wt \omega $ for $ A\setminus \B {\alpha r_n} {\wt A} $ of cardinality $ k_n $, achieving the covering radius of at most $ r_n $, $ n\geq 1 $. Using \eqref{eq:inclusion} and that by the covering property $ A\setminus \B {\alpha r_n} {\wt A} \subset \B {r_n} {\wt \omega} $ gives
    \[
        \B {r_n} A\setminus \B {(1+\alpha) r_n} {\wt A} \subset \B {r_n} {A\setminus \B {\alpha r_n} {\wt A}} \subset \B {2r_n} {\wt \omega}.
    \]
    In view of~\eqref{eq:lower_minkowski_difference}, this results in the following estimate for $ k_n = \#\wt \omega $:
    \begin{equation}
        \label{eq:boundI}
        k_n  r_n^d \geq \frac{v_{p-d}}{2^pv_p} (\overline{\mathcal M_d}(A) - (1+\alpha)^{p-d} \mathcal H_d(A) - 3 \epsilon) > 0
    \end{equation}
    for sufficiently small $ \alpha $, $ \epsilon $.

    Now consider an optimal covering configuration $ \omega_N $ for $ A $ attaining the covering radius $ \rho:= R(\omega_N, A) < \alpha r_n/3 $ and such that $ \rho^*(A, N-1) \geq \alpha r_n/3 $,
    so that $ N $ is the smallest possible cardinality for this inequality. Denote 
    \[
        \omega' := \omega_N \cap \B \rho {\wt A}, \qquad \omega'' := \omega_N \setminus \B {2\rho} {\wt A}.
    \]
    By construction, since $ \alpha r_n > 3 \rho $
    \[
        R(\omega', \wt A) = \rho = R(\omega'', A\setminus \B {\alpha r_n}{\wt A}).
    \]
    Since $ N_1\to \infty $ when $ r_n\downarrow 0 $, by taking $ n $ large enough and using~\eqref{eq:covering_tilde} we ensure that 
    \[
        N_1 (r_n/3)^d \geq N_1 \rho^*(\wt A, N_1)^d \geq \theta_d^d(\h_d(A) - 2\epsilon);
    \]
    in addition, due to $ \omega_N $ having the covering radius $ \rho < r_n $ and~\eqref{eq:boundI}, there holds
    \[
        N_2  (\alpha r_n/3)^d \geq \left(\frac{\alpha}3\right)^d \cdot \frac{v_{p-d}}{2^pv_p} (\overline{\mathcal M_d}(A) - (1+\alpha)^{p-d} \mathcal H_d(A) - 3 \epsilon) > 0.
    \]
    Adding together the last two displays yields, by fixing sufficiently small $ \alpha, \epsilon >0 $ and taking $ n\to \infty $:
    \[
        \limsup_{N\to \infty} N\rho^*(A,N) \geq \limsup_{n\to \infty} 
        (N(n)-1)\rho^*(A,N(n)-1) \geq \limsup_{n\to \infty} (N_1+N_2-1) (\alpha r_n/3)^d > \theta_d^d \h_d(A).
    \]
\end{proof}
\begin{proof}[Proof of Theorem~\ref{thm:generalest}]
    The proof follows along the same lines as that of Lemma~\ref{lem:small_covering}. Indeed, taking $ M = \underline{\m_d}(A) $ the first part of that proof gives
    \[
        \liminf_{N\to \infty}\rho^*(A,N) N^{1/d}  \leq \liminf_{n\to \infty}\rho^*(A,N(n)) \,N(n)^{1/d} \leq c(p,d) \underline{\m_d}(A)^{1/d}.
    \]
    To obtain the inequality
    \[
        \limsup_{N\to \infty}N^{1/d} \rho(A, N)\leqslant c_4 \overline{\m}_d(A) ^{1/d},
    \]
    observe that for any $ \epsilon > 0 $ and sequence $ r_n\downarrow 0 $ one has eventually
    \[
        \L_p(\B{r_n}{A}) \leq (\overline{\m}_d(A) + \epsilon) v_{p-d}{r_n}^{p-d},
    \]
    and it suffices to set $ r_n = \rho^*(A,n) $ over the sequence achieving the $ \limsup_{N\to \infty}N^{1/d} \rho(A, N) $ in the proof of Lemma~\ref{lem:small_covering}. This gives the two upper bounds of the theorem.

    For the lower bounds, start by observing that for every $ \epsilon >0 $ and sequence $ r_n\downarrow 0 $, eventually
    \[
        \L_p(\B{{r_n}}{A}) \geq (\underline{\m_d}(A)-\epsilon)  v_{p-d}{r_n}^{p-d},
    \]
    so it suffices to take $ r_n = \rho^*(A,n) $ over the sequence achieving the $ \liminf_{N\to \infty}N^{1/d} \rho(A, N) $ in the proof of Lemma~\ref{lem:small_covering}. Finally, for the second lower bound of the theorem, take $ M = \overline{\m_d}(A) $ in the third part of Lemma~\ref{lem:small_covering} and observe
    \[
        \limsup_{N\to \infty}\rho^*(A,N) N^{1/d}  \geq \limsup_{n\to \infty}\rho^*(A,N(n)) \,N(n)^{1/d} \geq c(p,d) \overline{\m_d}(A)^{1/d},
    \]
    with the dependence $ N(n) $ as in the lemma.
\end{proof}

\subsection{Verification of the short-range properties of \texorpdfstring{$ \rho^*(A,N) $}{rho*(A,N)} and \texorpdfstring{$ \P_s^*(A,N) $}{Ps(A,N)}}
\label{sec:verification}
It remains to verify properties of the functionals $ R(\omega_N,A) $ and $ P_s(\omega_N, A) $, formulated at the beginning of the previous section.

\medskip

\noindent\ul{Monotonicity}. Equation~\eqref{eq:monotonicity} follows immediately from the definitions of $ \rho^*(A,N) $ and $ \P_s^*(A,N) $.

\medskip

\noindent\ul{Existence of asymptotics on Jordan-measurable sets}. As mentioned previously, existence of these asymptotics for covering was obtained in the papers of Kolmogorov and Tikhomirov \cite{kolmogorovEentropy1959}, and independently the monograph of Graf and Luschgy \cite{grafFoundations2000}. The corresponding results for polarization were proved in \cite{BorodTAMS} for constrained polarization and, using similar ideas, in \cite{hardinUnconstrained2020} for its unconstrained analog.

\medskip

\noindent\ul{Short-range property}. We first prove the estimate \eqref{eq:stab_1} for the covering functional $\f^- = \rho^*$. By Lemma \ref{lem:small_covering}, for some constant $C= C(s,p,d,A)>0$ there holds, in view of the assumption $\overline{\m}_d(A_{m})<\infty$, $ m=1,2 $:
$$
\rho^*(A_{1}, N)\leqslant CN^{-1/d}, \;\;\;\; \rho^*(A_{2}, N)\leqslant CN^{-1/d}, \;\;\;\; \rho^*(A_{1}\cup A_2, N)\leqslant CN^{-1/d}.
$$
For the sake of brevity, denote $A:=A_1\cup A_2$, and fix a configuration $\omega_N$ such that $\rho^*(A, N)=R(\omega_N,A)$. For $m=1,2$, we have
\begin{multline}
CN^{-1/d}\geqslant \rho(A, N)=R(\omega_N, A_1\cap A_2)\geqslant R(\omega_N, A_m)\geqslant\min_{x\in \omega_N} |y-x| \\= \max_{y\in A_m} \min\Big(\min_{x\in \omega_N \cap \B h {A_m}} |y-x|, \min_{x\in \omega_N^* \setminus \B h {A_m}} |y-x|\Big) \geqslant \min\Big(\min_{x\in \omega_N^* \cap \B h {A_m}} |y-x|,h\Big).
\end{multline}
Since $h$ is fixed, and for large $N$ we have $h>CN^{-1/d}$, we obtain that the latter minimum is equal to $\min_{x\in \omega_N \cap \B h {A_m}} |y-x|$, and therefore, by definition of $N_m$,
$$
\rho(A_m, N_m) \leqslant R(\omega_N\cap \B h {A_m}, A_m) \leqslant R(\omega_N, A_m)\leqslant R(\omega_N, A)=\rho^*(A, N).
$$
This implies
$$
\max\left(\rho^*(A_1, N_1), \rho^*(A_2, N_2)\right) \leqslant \rho^*(A_1\cup A_2, N),
$$
and the desired inequality is proved. 

\hfill $\square$

We proceed with proving \eqref{eq:stab_1} for the polarization functional $\f^+=\P_s$. In this case, Lemma \ref{lem:small_covering} implies
\begin{equation}\label{eq:aux_stab_1}
\P_s(A_{1}, N)\geqslant CN^{s/d}, \;\;\;\; \P_s(A_{2}, N)\geqslant CN^{s/d}, \;\;\;\; \P_s(A_{1}\cup A_2, N)\geqslant CN^{s/d}.
\end{equation}
Again denote $A:=A_1\cup A_2$ and notice that, for $y\in A_m$, $m=1,2$, we have
$$
\sli_{x\in \omega_N\setminus \B h {A_m}} |y-x|^{-s} \leqslant h^{-s} \cdot N
$$
and, therefore,
$$
\sli_{x\in \omega_N} |y-x|^{-s} \leqslant \sli_{x\in \omega_N \cap \B h {A_m}}|y-x|^{-s} + h^{-s}\cdot N.
$$
This implies
$$
\begin{aligned}
\P_s(A, N) = P_s(\omega_N, A) &= \inf_{y\in A} \sli_{x\in \omega_N} |y-x|^{-s} = \min\left(\inf_{y\in A_1}\sli_{x\in \omega_N} |y-x|^{-s}, \inf_{y\in A_2}\sli_{x\in \omega_N} |y-x|^{-s}\right) \\ 
&\leqslant \min\left(\inf_{y\in A_1}\sli_{x\in \omega_N\cap A_1(h)} |y-x|^{-s}, \inf_{y\in A_2}\sli_{x\in \omega_N\cap A_2(h)} |y-x|^{-s}\right) + h^{-s}\cdot N\\
&\leqslant \min\left(\P_s(A_1, N_1), \P_s(A_2, N_2)\right) + h^{-s} \cdot N.
\end{aligned}
$$
It now remains to divide the above inequality by $\P_s(A, N)$ and pass to the limit, taking into account estimates \eqref{eq:aux_stab_1} and that $s>d$. 

\hfill $\square$

We now prove \eqref{eq:stab_2} for $\f^-=\rho^*$. Take two configurations, $\omega_{N_m}$, $m=1,2$ such that $\rho^*(A_m, N_m)=R(\omega_{N_m}, A_m)$. Define $\omega_N:=\omega_{N_1}\cup \omega_{N_2}$; then $\#\omega_N = N_1+N_2 =: N$. Therefore,
\begin{multline*}
\rho^*(A, N) \leqslant R(\omega_N, A) = \max\left(R(\omega_N, A_1), R(\omega_N, A_2)\right) \\ \leqslant \max\left(R(\omega_{N_1}, A_1), R(\omega_{N_2}, A_2)\right) = \max\left(\rho^*(A_1, N_1), \rho^*(A_2, N_2)\right),
\end{multline*}
and \eqref{eq:stab_2} is established. The proof of this inequality for $\f^+=\P_s$ is the same, and we leave it to the curious reader. 

\hfill $\square$.
\medskip

\noindent\ul{Stability}.
We prove \eqref{eq:stability} for $\f^+=\P_s^*$. The proof for $\f^-=\rho^*$ will be a by-product of Lemma \ref{lem:stability}, see Corollary~\ref{cor:stability_covering}.

To begin, we prove that in a sufficiently small neighborhood $ \B{r}A $ of a set $ A $, with $ r $ depending on $ N $, the value of $P_s(\omega_N, \B{r}A) $ is close to $ P_s(\omega_N, A) $ for any configuration $\omega_N\subset \R^p$. To do this, we will need the following well-known application of the Frostman lemma.
\begin{lemma}[Theorem 2.4, \cite{erdelyiRiesz2013}] For any Borel set $ A\subset \mathbb R^p $ with $ \h_d(A)>0 $ and $ s>d $, we have
\begin{equation}
    \label{eq:from_frostman}
    \mathcal P_s(\omega_N, A) \leq N^{s/d} \, {\cf(s,d,A)} = N^{s/d}\frac{s}{s-d}\cdot \frac{(2c)^{s/d}}{\mu(A)^{s/d}},\qquad N \geq 1,
\end{equation}
where $\omega_N$ is an $N$-point configuration in $\R^p$, and $ \mu $ is a Borel measure satisfying $ \mu(A) > 0 $ and $ \mu(\B{r}{x}\cap A) \leq cr^d $. Clearly, $ \mu \ll \h_d $.  When $ A\subset \mathbb R^d $, one can use $ \mu = \mathcal H_d $, so that 
\[
    \cf = \frac{s}{s-d}\cdot \frac{(2v_d)^{s/d}}{\mathcal H_d(A)^{s/d}}.
\] 
\end{lemma}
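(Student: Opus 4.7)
The plan is to run a Frostman-style averaging argument, where the averaging of $\Phi(y) := \sum_{x\in \omega_N}\|y-x\|^{-s}$ against $\mu$ is restricted to a ``good'' subset of $A$ that stays away from $\omega_N$. Averaging over all of $A$ would diverge near the singularities at points of $\omega_N$; the trick is that carving out a neighborhood of $\omega_N$ of radius $r_0$ chosen appropriately as a function of $N$ removes only half of the mass of $\mu$, while letting the remaining integral be controlled purely by the Frostman bound $\mu(A\cap B_r(x))\leq cr^d$.

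Concretely, I would set
\[
r_0 := \Bigl(\frac{\mu(A)}{2cN}\Bigr)^{1/d},\qquad A_0 := \{y\in A : \min_{x\in \omega_N}\|y-x\| < r_0\}.
\]
Since $A_0 \subseteq \bigcup_{x\in \omega_N}(A\cap B_{r_0}(x))$, subadditivity and the Frostman condition give $\mu(A_0) \leq N c r_0^d = \mu(A)/2$, hence $\mu(A\setminus A_0)\geq \mu(A)/2$. Using
\[
\inf_{y\in A}\Phi(y) \leq \inf_{y\in A\setminus A_0}\Phi(y) \leq \frac{1}{\mu(A\setminus A_0)}\int_{A\setminus A_0}\Phi(y)\,d\mu(y),
\]
swapping sum and integral, and using $A\setminus A_0 \subseteq \{y\in A : \|y-x\|\geq r_0\}$ for each $x$, reduces everything to bounding the tail integral
\[
I(x) := \int_{\{y\in A:\|y-x\|\geq r_0\}} \|y-x\|^{-s}\,d\mu(y)
\]
uniformly in $x$.

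The main computation is a layer-cake/integration-by-parts identity with the distribution function $h(u) := \mu(A\cap B_u(x))$: one writes $I(x) = s\int_{r_0}^\infty u^{-s-1} h(u)\,du$, whence $h(u)\leq cu^d$ together with $s>d$ yields $I(x) \leq \frac{sc}{s-d}\,r_0^{d-s}$. The condition $s>d$ is essential for convergence at infinity, and the two boundary terms in the IBP are controlled because $\mu(A) < \infty$ forces $u^{-s}h(u)\to 0$ as $u\to\infty$, while the contribution at $u = r_0$ has a favorable sign and can be dropped. Plugging in the choice of $r_0$ collapses the whole estimate to
\[
P_s(\omega_N,A) \leq \frac{2}{\mu(A)}\cdot N \cdot \frac{sc}{s-d}\,r_0^{d-s} = \frac{s}{s-d}\cdot\frac{(2c)^{s/d}}{\mu(A)^{s/d}}\,N^{s/d},
\]
which is the stated inequality; the Euclidean specialization $A\subset \mathbb{R}^d$, $\mu = \mathcal H_d$, $c = v_d$ follows immediately. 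The only delicate point is the justification of the IBP identity and the vanishing of the boundary term at infinity, both of which are routine once $I(x)$ is written in distribution-function form; the rest of the argument is simply optimizing the cutoff $r_0$ so that the volume of the good set and the tail integral balance.
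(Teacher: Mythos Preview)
Your argument is correct and is precisely the standard Frostman-type averaging used to prove this bound: remove an $r_0$-neighborhood of $\omega_N$ that carries at most half of $\mu(A)$, average the potential over the remainder, and control the single-point tail integral via the layer-cake identity together with $h(u)\le cu^d$ and $s>d$. The paper itself does not give a proof of this lemma; it is quoted verbatim from \cite{erdelyiRiesz2013} (Theorem~2.4 there), and the proof in that reference follows exactly the line you outline. So there is nothing to compare: your proposal is the intended proof.
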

We proceed with the following statement, in which we assume $s>d$ as usual.
\begin{lemma}
    \label{prop:nbhd}
    Suppose the compact set $ A \subset \mathbb R^p $ is such that $ \mathcal H_d(A) > 0 $. Take any $ 0< \epsilon < {1}/{2\cf^{1/s}} $ and define $ r_N := \epsilon N^{-1/d} $.
    Then for any configuration $ \omega_N \subset \R^p $, the sets $ B_{r_N}(A) $ satisfy
    \[
        \begin{aligned}
            \mathcal P_s^*(\omega_N, B_r(A)) 
        &\geq \left(1-\epsilon \, s\, {\cf^{1/s}}\right) \mathcal P_s^*(\omega_N, A)\\ 
        & =:(1 - \epsilon\, c_0(s,d,A))\mathcal P_s^*(\omega_N, A).
    \end{aligned}
    \]
\end{lemma}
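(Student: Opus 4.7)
Since $B_{r_N}(A)$ is compact and $y\mapsto \sum_{x\in\omega_N}\|y-x\|^{-s}$ is lower semicontinuous with values in $(0,\infty]$, the infimum defining $\mathcal{P}_s^*(\omega_N,B_{r_N}(A))$ is attained at some $y_0\in B_{r_N}(A)$. The plan is to compare the polarization at $y_0$ with its value at a nearest point $y^*\in A$, which automatically satisfies $\|y_0-y^*\|\leq r_N=\epsilon N^{-1/d}$, and to conclude via $\sum_{x}\|y^*-x\|^{-s}\geq \mathcal{P}_s^*(\omega_N,A)$.

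The first step is a Frostman-type separation estimate for $y_0$. Bounding a single term by the full sum yields, for every $x\in\omega_N$,
\[
\|y_0-x\|^{-s}\leq \sum_{x'\in\omega_N}\|y_0-x'\|^{-s}=\mathcal{P}_s^*(\omega_N,B_{r_N}(A))\leq \mathcal{P}_s^*(\omega_N,A)\leq N^{s/d}\cf,
\]
where the last inequality is the Frostman bound \eqref{eq:from_frostman}. Thus $\|y_0-x\|\geq N^{-1/d}\cf^{-1/s}$, and the triangle inequality together with the hypothesis $\epsilon<1/(2\cf^{1/s})$ gives $\|y^*-x\|\geq N^{-1/d}(\cf^{-1/s}-\epsilon)>0$ for every $x\in\omega_N$.

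The second step is a term-by-term comparison. From $\|y_0-x\|\leq \|y^*-x\|+r_N$ and the elementary convexity estimate $(t+r_N)^{-s}\geq t^{-s}-s\,r_N\,t^{-s-1}$ applied with $t=\|y^*-x\|$, summation over $\omega_N$ yields
\[
\sum_{x\in\omega_N}\|y_0-x\|^{-s}\;\geq\;\sum_{x\in\omega_N}\|y^*-x\|^{-s}\;-\;s\,r_N\!\!\sum_{x\in\omega_N}\|y^*-x\|^{-s-1}.
\]
I would then control the $(s+1)$-power sum by pulling out the factor $(\min_x\|y^*-x\|)^{-1}$ and invoking the separation bound from the previous step; combined with $\sum_x\|y^*-x\|^{-s}\geq \mathcal{P}_s^*(\omega_N,A)$ (since $y^*\in A$), this produces the desired factor multiplying $\mathcal{P}_s^*(\omega_N,A)$.

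The main obstacle will be the bookkeeping of constants: the estimate on $\min_x\|y^*-x\|$ introduces a factor of $(1-\epsilon\cf^{1/s})^{-1}$, and the hypothesis $\epsilon<1/(2\cf^{1/s})$ is precisely what ensures this factor stays bounded by $2$, so that the final error can be absorbed into the advertised constant $c_0=s\cf^{1/s}$ up to a benign rescaling.
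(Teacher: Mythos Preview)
Your argument is correct and close in spirit to the paper's, but the technical execution differs and, as you anticipate, it does not quite deliver the stated constant. The paper argues pointwise for each $y\in B_{r_N}(A)$ via a two-case dichotomy on the nearest $x\in A$ (set $H:=\cf^{-1/s}$): if $\min_i\|x-x_i\|<(H-\epsilon)N^{-1/d}$, then a single nearest term already gives $U(y,\omega_N)\geq(HN^{-1/d})^{-s}=\cf N^{s/d}\geq P_s(\omega_N,A)$ by the Frostman bound; otherwise the multiplicative ratio $\|y-x_i\|^{-s}/\|x-x_i\|^{-s}\geq(1-\epsilon/H)^s\geq1-s\epsilon/H$ is uniform in $i$, and summing yields the exact factor $1-\epsilon s\cf^{1/s}$. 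Your tangent-line route avoids the case split, but because you pull out $(\min_x\|y^*-x\|)^{-1}$ \emph{after} transferring the separation from $y_0$ to $y^*$, you pick up the extra factor $(1-\epsilon\cf^{1/s})^{-1}\leq 2$, so you actually prove the inequality with $2c_0$ in place of $c_0$---harmless for the only downstream use (Lemma~\ref{lem:stability}), but not the literal statement. If you want the exact constant without a case split, replace the tangent bound by the multiplicative one using your separation at $y_0$ itself: from $\|y^*-x\|\geq\|y_0-x\|-r_N$ and $\|y_0-x\|\geq HN^{-1/d}$ you get $\|y_0-x\|^{-s}\geq(1-r_N/\|y_0-x\|)^s\|y^*-x\|^{-s}\geq(1-\epsilon/H)^s\|y^*-x\|^{-s}$ termwise, and summation over $x\in\omega_N$ finishes.
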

\begin{proof}
     Let in this proof
    \[
        H = {1}/{\cf^{1/s}}.
    \]
    Our goal is to prove that for every $y\in \B{r_N}{A}$, we have
		$$
		U(y, \omega_N)=\sli_{x\in \omega_N}\|y-x\|^{-s} \geqslant P_s(\omega_N, A).
		$$
		Since $y\in \B{r_N}{A}$, there is an $x\in A$ such that $|y-x|\leqslant r_N = \ep N^{-1/d}$. There are two cases for the location of $x$:
    \begin{enumerate}[(a)]
        \item $ \min_i \{ \|x-x_i\| < (H - \epsilon) N^{-1/d} \} $
        \item $ \min_i \{ \|x-x_i\| \geq (H - \epsilon) N^{-1/d} \} $,
    \end{enumerate}
    which we consider separately. If the first of the above cases holds, we can find an $ x_i' \in \omega_N $ with $\|x-x_i'\| < (H-\ep)N^{-1/d}$.  Then, from \eqref{eq:from_frostman}, we obtain:
    \[
        U(y,\omega_N) \geq \|y - x_i'\|^{-s} \geq (\|y - x\| + \|x - x_i'\|)^{-s} \geq \left(H  N^{-1/d}\right)^{-s} = {\cf N^{s/d}} \geq P_s(\omega_N,A).
    \]
    If the second case holds, we have for every $ 1 \leq i \leq N $:
    \[
        \begin{aligned}
        \frac{ \|y-x_i\|^{-s} }{ \|x-x_i\|^{-s}} 
        &= \left(\frac{ \|x-x_i\|}{ \|y-x_i\| }\right)^{s} 
        \geq  \left(\frac{ \|x-x_i\|}{\|x-x_i\| + \epsilon N^{-1/d}}\right)^s \\
        & =\left( 1- \frac{\epsilon N^{-1/d}}{\|x-x_i\| + \epsilon N^{-1/d}} \right)^s
        \geq \left(1 - \epsilon/H \right)^s \geq 1- \epsilon\, s /H,
    \end{aligned}
    \]
    whence
    \[
        U(y, \omega_N)\geqslant (1- \epsilon\, s /H)U(x,\omega_N) \geqslant (1- \epsilon\, s /H) \inf_{x\in A} U(x, \omega_N) = (1- \epsilon\, s /H)P_s(\omega_N, A).
    \]
    \end{proof}

We now prove stability for the functional $\f^+=\P^*_s$. 
\begin{lemma}
    \label{lem:stability}
    Suppose $ A\subset \mathbb R^p $ is a compact set with $ 0 < \m_d(A) < \infty  $ and $ \h_d(A) > 0 $, and that $ s>d $.
    Given an $ \varepsilon \in (0,1) $, there exists $ \delta = \delta(\varepsilon, s,p,d,A) $ such that whenever a closed $ D\subset A $ satisfies
    \[
        \m_d(D) > ( 1-\delta )\m_d(A) \qquad \text{and} \qquad \h_d(D) > ( 1-\delta )\h_d(A),
    \]
    there holds 
    \begin{equation}
        \label{eq:regularity}
        \Lim{N\to \infty} \frac{\P_s^*(\omega_N^*(A))}{N^{s/d}}  \geq (1-\varepsilon)
        \Lim{N\to \infty}
        \frac{\P_s^*(\omega_N^*(D))}{N^{s/d}}, \qquad \Lim{} \in \{ \liminf,  \limsup\}.
    \end{equation}
    In addition, for $ d=p $, $ \delta $ can be made independent of $ A $.
\end{lemma}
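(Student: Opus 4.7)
The plan is to augment a near-optimal polarization configuration $\omega_N$ for $D$ by a small auxiliary set that covers the ``gap'' $A \setminus D$, and then verify that the combined configuration is close to optimal for $A$. Fix a small parameter $\eta > 0$, set $r_N := \eta N^{-1/d}$, and pick $\omega_N \subset \mathbb R^p$ with $P_s(\omega_N, D) \geq (1 - 1/N)\,\mathcal P_s^*(D, N)$. Introduce the far set $E := A \setminus \B{r_N}{D}^\circ$. Since $\B{\rho}{D}$ and $\B{\rho}{E}$ are disjoint and contained in $\B{\rho}{A}$ whenever $\rho < r_N / 2$, comparing $\mathcal L_p$ measures and letting $\rho \downarrow 0$ yields
\[
    \underline{\m}_d(D) + \overline{\m}_d(E) \leq \overline{\m}_d(A).
\]
Because $\m_d(A)$ exists and $\m_d(D) \geq (1 - \delta)\,\m_d(A)$, this gives $\overline{\m}_d(E) \leq \delta\, \m_d(A)$, so Lemma~\ref{lem:covering_via_packing} produces $\omega_n \subset E$ of cardinality $n \leq C_0\, \delta\, \eta^{-d}\, N$ covering $E$ with radius $r_N$, for a constant $C_0 = C_0(p, d, A)$.

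Set $\tilde\omega := \omega_N \cup \omega_n$, of cardinality at most $N(1 + C_0 \delta / \eta^d)$. For $y \in \B{r_N}{D}$, Lemma~\ref{prop:nbhd} (applied with $D$ in place of $A$) yields $U(y, \tilde\omega) \geq U(y, \omega_N) \geq (1 - \eta c_0)\,P_s(\omega_N, D)$, where $c_0 = c_0(s, d, A)$ is uniformly controlled over $D$ with $\h_d(D) \geq \h_d(A)/2$; for $y \in E$, the covering property of $\omega_n$ gives $U(y, \tilde\omega) \geq \eta^{-s} N^{s/d}$. Applying the Frostman-type bound \eqref{eq:from_frostman} with a $d$-regular measure on $A$ restricted to $D$, one obtains $\mathcal P_s^*(D, N)/N^{s/d} \leq C_1$ with $C_1 = C_1(s, d, A)$; so choosing $\eta$ to satisfy $\eta^{-s} \geq (1 - \eta c_0)\,C_1$ makes the first lower bound binding, giving
\[
    \mathcal P_s^*(A, \#\tilde\omega) \geq (1 - \eta c_0)(1 - 1/N)\, \mathcal P_s^*(D, N).
\]

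Since $\mathcal P_s^*(A, \cdot)$ is monotone nondecreasing in $N$, for large $M$ one selects the largest $N$ with $N + n(N) \leq M$, so that $(N/M)^{s/d} \geq (1 + C_0 \delta / \eta^d)^{-s/d}$ up to a vanishing correction. Dividing the previous display by $M^{s/d}$ and taking $\liminf$ or $\limsup$ as $M \to \infty$ gives
\[
    \Lim{M\to\infty}\frac{\mathcal P_s^*(A, M)}{M^{s/d}} \geq (1 - \eta c_0)(1 + C_0 \delta / \eta^d)^{-s/d} \cdot \Lim{N\to\infty}\frac{\mathcal P_s^*(D, N)}{N^{s/d}}.
\]
Given $\ep \in (0,1)$, one first fixes $\eta = \eta(\ep, s, d, A)$ so small that both $1 - \eta c_0 \geq 1 - \ep/2$ and the Frostman condition $\eta^{-s} \geq (1 - \eta c_0) C_1$ hold, then picks $\delta = \delta(\ep, s, p, d, A)$ so small that $(1 + C_0 \delta / \eta^d)^{-s/d} \geq 1 - \ep/2$. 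The main obstacle is precisely this interlocked quantifier choice: shrinking $\eta$ sharpens the neighborhood estimate of Lemma~\ref{prop:nbhd} but simultaneously magnifies the cardinality correction $C_0 \delta / \eta^d$, so $\delta$ must be selected only after $\eta$ is locked in. For the $d = p$ case, one uses $\mu = \mathcal H_d$ directly in \eqref{eq:from_frostman}, so $C_1$ depends only on $s, d$ and on the ratio $\h_d(D)/\h_d(A) \geq 1/2$; all relevant constants then become independent of $A$, which yields a uniform $\delta$ as claimed.
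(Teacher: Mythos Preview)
Your argument is correct and follows essentially the same route as the paper: augment an optimal configuration for $D$ by a small covering of the remainder $A\setminus \B{r_N}{D}$, invoke Lemma~\ref{prop:nbhd} on the neighborhood of $D$, use Lemma~\ref{lem:covering_via_packing} together with the Minkowski-content difference $\overline{\m}_d(E)\leq\delta\,\m_d(A)$ to bound the cardinality of the added points, and balance the covering radius against the Frostman bound~\eqref{eq:from_frostman} so that the remainder term is not the minimum. The only notable difference is bookkeeping: the paper normalizes $r_N$ by $c_0(s,d,D)$ from the outset so that the cardinality estimate reads $n\leq\lfloor\epsilon_0 N\rfloor$ directly, whereas you fix $\eta$ first and then choose $\delta$ afterward; both lead to $\delta\asymp\epsilon_0^{d+1}$ and, for $p=d$, to the cancellation of $\h_d(A)$ between $C_0$ and $\eta^{-d}$ that makes $\delta$ uniform in $A$.
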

\begin{proof}
    For this argument, let the constant $ 0< \epsilon_0 < 1/2 $ be such that
    \[
        \frac{1-\epsilon_0}{(1+\epsilon_0)^{s/d}}
          \geq 1-\varepsilon.
    \]
    It will be shown that taking
    \[
        \delta \leq C(s,p,d,A)\, {\epsilon_0^{d+1}}
    \]
    yields the desired estimate; in addition, the constant $ C $ in this inequality will be made independent of $ A $ when $ d=p $.
    Denote 
		$$
		r_N:=\frac{\epsilon_0 N^{-1/d}}{2{c_0(s,d,D)}},
		$$ 
		where $ c_0 $ is defined in the preceding lemma. For every $ N \geq 1 $, we write $A$ as a disjoint union
    \[
        A = D_N\cup R_N
    \]
    where
    \[
        D_N :=  B_{2r_N}(D),\qquad R_N:= A\setminus D_N,
    \]
     Let $\omega_N^*(D)$ be a configuration that attains $\P_s^*(D, N)$. This choice of $ r_N $ implies
    \begin{equation}
        \label{eq:from_lemma}
        P_s(\omega_{N}^*(D),\, D_N) \geq (1-\epsilon_0) \P_s^*(D,N).
    \end{equation}

    To construct a sequence of configurations giving the estimate \eqref{eq:stability}, we choose a number $ n = n(\epsilon_0, N) $ and a configuration $\omega_n\subset R_N$, and denote $\omega_{N+n}:=\omega_N^*(D)\cup \omega_n$. In view of the definition of $ \P_s^* $, we have
    \begin{equation}
        \label{eq:minimum_large}
        \P_s^*(A, N+n) \geq \min \left[P_s(\omega_{N}^*(D), D_N), \ \P_s^*(R_N, n)  \right].
    \end{equation}
    The desired estimate will follow if we choose a configuration $\omega_n $ of cardinality $ n $, so that
    \begin{enumerate}[(a)]
        \item\label{it:one} $\P_s^*(R_N,n) \geq P(\omega_n,R_N) \geq \P_s^*(D,N) $,
        \item\label{it:two} $ n = \lfloor \epsilon_0 N \rfloor. $
    \end{enumerate}
    Indeed, assume conditions \eqref{it:one}--\eqref{it:two} have been verified; using \eqref{eq:from_lemma} and \eqref{eq:minimum_large}, in the case $ \Lim{} = \limsup $ the left-hand side of \eqref{eq:stability} can be estimated as
    \[
        \begin{aligned}
            \limsup_{N\to\infty} \frac{\P_s^*(A, N)}{N^{s/d}} &\geq \limsup_{N\to\infty} \frac{\P_s^*(A, N+n)}{(N+n)^{s/d}}
            \geq  \limsup_{N\to \infty} \frac{1}{(N+n)^{s/d}}  \min \left[P_s(\omega_{N}^*(D), D_N), \ \P_s^*(R_N, n)  \right]\\
            &
            \geq (1-\epsilon_0)\limsup_{N\to \infty} \frac{\P_s^*(D,N)}{(N+n)^{s/d}}
            \geq (1-\epsilon)\limsup_{N\to \infty} \frac{\P_s^*(D,N)}{N^{s/d}}.
        \end{aligned}
    \]
    Similarly, when $ \Lim{} = \liminf $, choosing a subsequence $ \mathcal N \subset \mathbb N $ such that 
    \[
        \lim_{\mathcal N \ni N\to\infty} \frac{\P_s^*(A, N)}{N^{s/d}} = \liminf_{N\to\infty} \frac{\P_s^*(A, N)}{N^{s/d}},
    \]
    and taking $ N+n\in \mathcal N $ gives
    \[
        \begin{aligned}
            \liminf_{N\to\infty} \frac{\P_s^*(A, N)}{N^{s/d}} 
            &= \lim_{\mathcal N\ni(N+n)\to\infty} \frac{\P_s^*(A, N+n)}{(N+n)^{s/d}}\\
            &\geq  \liminf_{\mathcal N\ni(N+n)\to\infty} \frac{1}{(N+n)^{s/d}}  \min \left[P_s(\omega_{N}^*(D), D_N), \ \P_s^*(R_N, n)  \right]\\
            &
            \geq (1-\epsilon_0) \liminf_{\mathcal N\ni(N+n)\to\infty} \frac{\P_s^*(D,N)}{(N+n)^{s/d}}
            \geq (1-\epsilon)\liminf_{N\to \infty} \frac{\P_s^*(D,N)}{N^{s/d}}.
        \end{aligned}
    \]

    Now let us verify conditions~\eqref{it:one}--\eqref{it:two}.
    To prove condition \eqref{it:one}, we use that equation \eqref{eq:from_frostman} gives
    \[
        \P_s^*(D, N ) \leq \cf(s,d,D) N^{s/d} = \left(\frac{c_0}s\right)^s N^{s/d}.     
    \]
    Assume that we can choose a set $\omega_n = \{x_1, \ldots, x_n\}$ such that
    $$
    R_N\subset \bigcup_{i=1}^n B_{{s}/{c_0N^{1/d}}}\left(x_i\right).
    $$
    Then for every $y\in R_N$ there is an $x\in \omega_n$ such that $|y-x|\leqslant s/c_0 N^{-1/d}$, which implies
    $$
    \P_s^*(R_N, n) \geqslant P_s(\omega_n, R_N) = \inf_{y\in R_N} \sli_{i=1}^n \|y-x_i\|^{-s} \geqslant \left(\frac{s}{c_0}\right)^{-s} N^{s/d} \geqslant \P_s^*(D, N),
    $$
    so that for such a choice of $\omega_n$, condition \eqref{it:one} is satisfied. It remains to prove that $n$ can be taken to satisfy \eqref{it:two}. 

    Notice that by definition, for any $ r > 0 $,
    \[
        \B {r} {R_N} = B_{r} \big(A\setminus \B{2r}D\big) \subset \B{r}{A} \setminus \B{r}D.
    \]
    so in view of $ \m_d(D) > (1-\delta) \m_d(A) $,
    \begin{equation*}
        \label{eq:minkowski_difference}
        \begin{aligned}
            \overline{\m}_d(R_N) 
            &= \limsup_{r\downarrow0} \frac{\L_p[\B{r}{R_N}]}{v_{p-d}r^{p-d}} \leq \limsup_{r\downarrow0} \frac {\L_p(\B{r}{A}\setminus \B{r}D)}{v_{p-d}r^{p-d}} 
            =  \limsup_{r\downarrow0} \frac{\L_p(\B{r}{A}) - \L_p( \B{r}D)}{v_{p-d}r^{p-d}}  \\
            &= \delta\, \m_d(A).
        \end{aligned}
    \end{equation*}
    By Lemma~\ref{lem:covering_via_packing}, for $ N\geq N_0(A,D,\delta) $ (so that $ r_N $ is sufficiently small) there exists a configuration $ \omega_n \subset R_N $, for which
    $$
        R_N\subset \bigcup_{i=1}^n B_{r_N}\left(x_i\right),
    $$
    with cardinality $ n $ satisfying
    \begin{equation}
        \label{eq:n_estimate}
        \begin{aligned}
        n \leq \, 
        & \delta\, c(p,d) \, r_N^{-d}  \, \m_d(A)\\
        &= \frac{\delta}{\epsilon_0^d} c(p,d) \, s^d \cdot2^{d} \cf(s,d,D)^{d/s} \, {\m_d(A)}\, N.
    \end{aligned}
    \end{equation}
    Since $ \mu \ll \h_d $, there exists a $ \delta_0 = \delta(A) $, such that $ \h_d(A\setminus D) < \delta_0\h_d(A) $ implies $ \mu(D) \geq \mu(A)/2 $, whence by~\eqref{eq:from_frostman} $ \cf(s,d,D) \leq 2^{s/d}\cf(s,d,A) $.
    Substituting the value of $ \cf $ from~\eqref{eq:from_frostman} shows that
    setting
    \[
        \begin{aligned}
        \delta 
        &= \min \left\{ \delta_0,\  \epsilon_0^{d+1} \cdot \frac{\cf(s,d,A)^{-d/s}}{s^d\, 2^{d} c(p,d)} \cdot {\m_d(A)} \right\} \\
        &=: C(s,p,d,A) \,\epsilon_0^{d+1}
    \end{aligned}
    \]
    gives
    \[
        n \leq \epsilon_0 N, \qquad N \geq N_0(A,D,\delta),
    \]
    The above estimate implies $n\leqslant \lfloor \epsilon_0 N \rfloor$. By adding more points to $\omega_n$, we can always ensure that $n = \lfloor \epsilon_0 N \rfloor$, and the proof of \eqref{it:one}--\eqref{it:two} is finished.
		

   To verify the second claim of the lemma, recall that for $ p=d $, $ \mu = \h_d $; substituting this into $ \cf $ gives $ \delta_0 = 1/2 $, and in view of $ \h_d(A) = \m_d(A) $ equation~\eqref{eq:n_estimate} becomes
   \[
       \begin{aligned}
           n \leq \frac{\delta}{\epsilon_0^d} \, 3s^d\cdot2^{2d+2} \left(\frac{s}{s-d}\right)^{d/s} \, N = c(s,d) \,\frac{\delta}{\epsilon_0^d}\,  N.
   \end{aligned}
   \]
   As a result, $ \delta $ is chosen independently of $ A $ in this case, as desired.
\end{proof}
As a by-product of our proof, we get the stability result for $\f^-=\rho^*$ and for the unconstrained polarization.

\begin{corollary}
    \label{cor:stability_covering}
    Since for the proof of \eqref{eq:stability} we presented a covering of the set $ A\setminus \B{2r_N}D $, it is easy to see that this construction yields also the stability results for covering.
\end{corollary}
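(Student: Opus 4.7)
The plan is to reuse verbatim the construction from the proof of Lemma~\ref{lem:stability}, noting that the auxiliary configuration $\omega_n$ built there to bound $\mathcal{P}_s$ on $R_N = A \setminus \B{2r_N}{D}$ is already a low-cardinality covering of $R_N$ with small radius, which is exactly what the covering functional needs. Concretely, the stability axiom \eqref{eq:stability} for $\f^- = \rho^*$, once unpacked using $\sgn\sigma = -1$, reads
\[
  \Lim{N\to\infty} N^{1/d}\rho^*(A,N) \;\leq\; (1+\epsilon)\,\Lim{N\to\infty} N^{1/d}\rho^*(D,N), \qquad \Lim{} \in \{\liminf,\ \limsup\},
\]
whenever $D\subset A$ is closed with $\h_d(D) > (1-\delta)\h_d(A)$ for a $\delta=\delta(\epsilon,p,d,A)$ to be chosen.

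First I fix a small auxiliary parameter $\epsilon_0>0$ (to be tuned), set $r_N = \epsilon_0 N^{-1/d}$, and partition $A = D_N \cup R_N$ with $D_N = \B{2r_N}{D}$ as in Lemma~\ref{lem:stability}. Let $\omega_N^*(D)$ be an optimal $N$-point covering of $D$, and let $\omega_n\subset R_N$ be the packing-based covering of $R_N$ with radius $r_N$ produced by Lemma~\ref{lem:covering_via_packing}; the same Minkowski content estimate used in the proof of Lemma~\ref{lem:stability}, namely $\B{r}{R_N}\subset \B{r}{A}\setminus \B{r}{D}$ together with $\h_d(A)=\m_d(A)$, yields $\overline{\m}_d(R_N)\leq \delta\, \m_d(A)$ and hence
\[
  n \;\leq\; c(p,d)\,r_N^{-d}\,\overline{\m}_d(R_N) \;\leq\; c(p,d)\,\epsilon_0^{-d}\,\delta\,\m_d(A)\,N.
\]
Choosing $\delta\leq \epsilon_0^{d+1}/(c(p,d)\m_d(A))$ therefore forces $n\leq \epsilon_0 N$.

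Next I verify that $\omega_{N+n} := \omega_N^*(D)\cup \omega_n$ is an acceptable covering of $A$. On $R_N$ the covering radius is at most $r_N$ by construction, while on $D_N$ the triangle inequality gives $\dist(y,\omega_N^*(D))\leq \rho^*(D,N) + 2r_N$ for every $y\in D_N$. By Lemma~\ref{lem:small_covering} applied to $D$ (using $\h_d(D)>0$), one has $\rho^*(D,N)\geq c_D N^{-1/d}$ for all large $N$, so $2r_N \leq (2\epsilon_0/c_D)\,\rho^*(D,N)$ and
\[
  (N+n)^{1/d}\rho^*(A,N+n) \;\leq\; (1+\epsilon_0)^{1/d}\bigl(1+2\epsilon_0/c_D\bigr)\,N^{1/d}\rho^*(D,N).
\]
Tuning $\epsilon_0=\epsilon_0(\epsilon,c_D,d)$ makes the prefactor $\leq 1+\epsilon$, and the $\liminf$/$\limsup$ reindexing along $N\mapsto N+n$ is carried out exactly as in Lemma~\ref{lem:stability}, yielding the desired inequality.

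The only non-routine ingredient is the Minkowski content bound on $R_N$, which is precisely what requires the assumption $\h_d(A)=\m_d(A)$ built into the hypothesis of Lemma~\ref{lem:stability} and inherited here; since this step was already carried out in that proof, the rest is a triangle inequality combined with the lower bound $\rho^*(D,N)\gtrsim N^{-1/d}$, so there is nothing essentially new to do and the claim indeed follows as an immediate by-product.
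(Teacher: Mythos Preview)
Your proposal is correct and is precisely the argument the paper has in mind: you reuse the covering $\omega_n\subset R_N$ of radius $r_N$ produced via Lemma~\ref{lem:covering_via_packing} together with the bound $\overline{\m}_d(R_N)\leq\delta\,\m_d(A)$ from the proof of Lemma~\ref{lem:stability}, combine it with an optimal cover $\omega_N^*(D)$ of $D$, and control the resulting radius on $A\cap D_N$ by the triangle inequality and the lower bound $\rho^*(D,N)\gtrsim N^{-1/d}$ from Lemma~\ref{lem:small_covering}. This is exactly what the paper means by ``it is easy to see that this construction yields also the stability results for covering''; the only cosmetic difference is that you may take $r_N=\epsilon_0 N^{-1/d}$ directly, since the polarization-specific normalization by $c_0(s,d,D)$ from Lemma~\ref{prop:nbhd} is irrelevant for the covering functional.
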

\begin{corollary}
Assume $A\subset \R^p$ is an $(\h_d, d)$-rectifiable set with $0 < \m_d(A)=\h_d(A) < \infty$, and $s>d$. For every $ \epsilon >0 $, there exists $ \delta = \delta(\epsilon, s,p,d,A) $ such that 
    \begin{equation}
        \label{eq:stability_constr}
        \liminf_{N\to\infty} \frac{\mathcal \P_s(A,N)}{N^{s/d}} \geq  
         (1 - \epsilon) \cdot \liminf_{N\to\infty} \frac{\mathcal \P_s(D,N)}{N^{s/d}}
    \end{equation} 
		and
	 \begin{equation}
        \label{eq:stability_constr_2}
        \limsup_{N\to\infty} \rho(A, N)N^{1/d} \leq  
         (1 + \epsilon) \cdot \limsup_{N\to\infty} \rho(A, N)N^{1/d}.
    \end{equation} 	
    whenever the compact $ D \subset A $ satisfies $ \h_d(D) > (1 - \delta )\,\h_d(A)$. 
\end{corollary}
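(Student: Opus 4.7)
The plan is to run the proof of Lemma~\ref{lem:stability} essentially verbatim, making the single additional observation that the configurations it builds already lie inside $A$, and therefore serve equally well as witnesses for the constrained polarization $\P_s(A,N)$ and the constrained covering $\rho(A,N)$. The only change from the unconstrained setting is to start from a constrained optimal configuration on $D$; the auxiliary patch-up points produced by Lemma~\ref{lem:covering_via_packing} are by construction a subset of the set being covered, so they automatically stay in $A$. (I read the second displayed inequality of the corollary as $\limsup_{N\to\infty}\rho(A,N)N^{1/d}\leq (1+\epsilon)\limsup_{N\to\infty}\rho(D,N)N^{1/d}$, correcting the evident typo.)

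For \eqref{eq:stability_constr}, I would pick $\omega_N^*(D)\subset D\subset A$ attaining $\P_s(D,N)$, then reuse the parameters $r_N=\epsilon_0 N^{-1/d}/(2c_0(s,d,D))$, $D_N=B_{2r_N}(D)$, $R_N=A\setminus D_N$ from the proof of Lemma~\ref{lem:stability}. Lemma~\ref{prop:nbhd} applies to the fixed configuration $\omega_N^*(D)$ relative to the set $D$ and gives $P_s(\omega_N^*(D),D_N)\geq (1-\epsilon_0)\P_s(D,N)$, without any requirement that the configuration lie outside $D$. Lemma~\ref{lem:covering_via_packing} then furnishes $\omega_n\subset R_N\subset A$ covering $R_N$ with balls of radius $s/(c_0 N^{1/d})$ and of cardinality controlled by $\overline{\m}_d(R_N)\leq \delta\,\m_d(A)$; choosing $\delta=\delta(\epsilon,s,p,d,A)$ small enough forces $n\leq\epsilon_0 N$ exactly as before. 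For every $y\in A$ we either have $y\in D_N$, where the $\omega_N^*(D)$-part already contributes at least $(1-\epsilon_0)\P_s(D,N)$, or $y\in R_N$, where the $\omega_n$-part contributes at least $(c_0/s)^sN^{s/d}\geq \P_s(D,N)$ by the Frostman bound \eqref{eq:from_frostman}. Hence $\P_s(A,N+n)\geq (1-\epsilon_0)\P_s(D,N)$; dividing by $(N+n)^{s/d}$ and passing to a $\liminf$-realizing subsequence yields \eqref{eq:stability_constr} upon taking $\epsilon_0$ such that $(1-\epsilon_0)/(1+\epsilon_0)^{s/d}\geq 1-\epsilon$.

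For the covering inequality, the construction is even cleaner: take $\omega_N^*(D)\subset D$ attaining $\rho:=\rho(D,N)$ and set $D_N:=B_{\epsilon\rho/2}(D)\cap A$, $R_N:=A\setminus D_N$. Since $D\subset B_\rho(\omega_N^*(D))$, every $y\in D_N$ lies within $(1+\epsilon/2)\rho$ of $\omega_N^*(D)$. Applying Lemma~\ref{lem:covering_via_packing} to $R_N$ produces $\omega_n\subset R_N\subset A$ covering $R_N$ with radius $\epsilon\rho/2$ and cardinality
\[
    n\leq c(p,d)(\epsilon\rho/2)^{-d}\overline{\m}_d(R_N)\leq c(p,d)(\epsilon\rho/2)^{-d}\,\delta\,\m_d(A).
\]
Since Lemma~\ref{lem:small_covering} gives $\rho N^{1/d}\geq C_3>0$, this simplifies to $n\leq c'(p,d,\epsilon,A)\,\delta\,N$, which is at most $\epsilon N$ once $\delta=\delta(\epsilon,p,d,A)$ is small enough. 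The combined configuration $\omega_N^*(D)\cup\omega_n\subset A$ covers $A$ with radius at most $(1+\epsilon/2)\rho$, so
\[
    \rho(A,N+n)(N+n)^{1/d}\leq (1+\epsilon/2)(1+\epsilon)^{1/d}\,\rho(D,N)\,N^{1/d},
\]
and taking $\limsup$ (along a $\limsup$-realizing subsequence for $\rho(D,N)N^{1/d}$) and shrinking $\epsilon$ gives \eqref{eq:stability_constr_2}.

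The whole argument is bookkeeping built on two facts already in hand: Lemma~\ref{prop:nbhd} is insensitive to whether the configuration lies inside or outside the set in question, and Lemma~\ref{lem:covering_via_packing} delivers covering points inside the set being covered. The only place where one must be slightly careful is in estimating $\overline{\m}_d(R_N)$ from the hypothesis $\h_d(D)\geq(1-\delta)\h_d(A)$; as in the proof of Lemma~\ref{lem:stability}, this uses the equality $\m_d(A)=\h_d(A)$ together with the corresponding Minkowski control on $D$ (which in practice one obtains by choosing $D$ of the Federer-type form used throughout Section~\ref{sec:proofs_recti}). No new ideas are needed beyond this.
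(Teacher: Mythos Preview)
Your proposal is correct and takes essentially the same approach as the paper: the paper's entire proof is the one-line observation that in the construction of Lemma~\ref{lem:stability} the auxiliary configuration $\omega_n$ lies in $R_N\subset A$, so starting from a constrained optimizer $\omega_N^*(D)\subset D$ yields $\omega_{N+n}\subset A$ and the same chain of inequalities goes through for $\P_s$ and $\rho$. Your write-up is more detailed (especially for the covering inequality, which the paper relegates to Corollary~\ref{cor:stability_covering} without spelling out), but the idea and the ingredients are identical.
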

The proof of this corollary is identical to the proof of \eqref{eq:stability}; indeed, the only configuration we constructed was $\omega_n \subset R_N\subset A$. Therefore, if the original configuration $\omega^*_N(D)$ is a subset of $D$, then the new configuration $\omega_{N+n}$ is a subset of $A$, and we can estimate $\P_s(A, N+n)\geqslant P_s(\omega_{N+n}, A)$. 
\begin{proof}[Proof of Theorems \ref{thm:covering_asmpt} and~\ref{thm:polar_asmpt} for $\rho$ and $\P_s$]
Assume first $p=d$; i.e., $A\subset \R^d$. In this case, our theorem follows from \cite[Theorem 1.11]{hardinUnconstrained2020}. 

Assume now $d<p$. We notice that the only difference of $\rho$ and $\P_s$ from $\rho^*$ and $\P_s^*$ is the lack of monotonicity. However, the proof of the inequalities
$$
\liminf_{N\to \infty} \frac{\mathcal \P^*_s(A,N)}{N^{s/d}} \geqslant \frac{\sigma_{s,d}}{\h_d(A)^{s/d}}, \;\;\;\; \text{and} \;\;\;\; \limsup_{N\to \infty}\rho^*(A, N)N^{1/d} \leqslant \theta_d \h_d(A)^{1/d}
$$
did not use monotonicity and used only stability and short-range properties. Since these properties still hold for $\rho$ and $\P_s$, we get
$$
\liminf_{N\to \infty} \frac{\mathcal \P_s(A,N)}{N^{s/d}} \geqslant \frac{\sigma_{s,d}}{\h_d(A)^{s/d}}, \;\;\;\; \text{and} \;\;\;\; \limsup_{N\to \infty}\rho(A, N)N^{1/d} \leqslant \theta_d \h_d(A)^{1/d}.
$$

On the other hand, we clearly have $\P_s(A, N)\leqslant \P_s^*(A, N)$, and therefore
$$
\limsup_{N\to \infty} \frac{\mathcal \P_s(A,N)}{N^{s/d}} \leqslant \limsup_{N\to \infty} \frac{\mathcal \P^*_s(A,N)}{N^{s/d}}\leqslant \frac{\sigma_{s,d}}{\h_d(A)^{s/d}},
$$
which finishes the proof for $\P_s$. Finally, we have $\rho(A, N)\geqslant \rho^*(A, N)$, which implies
$$
\liminf_{N\to \infty} \rho(A, N)N^{1/d} \geqslant \liminf_{N\to \infty} \rho^*(A, N)N^{1/d}\geqslant \theta_d \h_d(A)^{1/d},
$$
and our proof is finished.
\end{proof}
\begin{remark}
    An inspection of the proof of the above lemma shows that when the set $ A $ is $ d $-regular, i.e.\ equation~\eqref{eq:dregular} holds, the constant $ \delta $ depends only on the constant $ c $ in this equation. Furthermore, one can use $ \mu = \h_d $ in this case.
\end{remark}
\subsection{Covering as a limit of polarization}
Recall the following result about separation properties of optimal polarization configurations. It will be useful to control the cardinality of such configurations in a given volume.
\begin{customthm}{B}[Theorem 2.3 \cite{reznikovCovering2018}, Proposition 4.2 \cite{hardinUnconstrained2020}]
    \label{thm:weak_sep}
    Let $A\subset \R^p$ be a set with $\H_d(A)>0$. For every $s>\max(d, p-2)$ there exists a constant $\eta(s,d,A)$ such that for every $N\geqslant 1$, and every configuration $\omega_N$ that attains $\P_s^*(A, N)$, we have
    $$
    \#\Big(\omega_N \cap \B{\eta(s,d,A) N^{-1/d}} x\Big) \leqslant p, \;\;\; \forall x\in \R^p.
    $$
    Moreover, the value of $\eta(s,d,A)$ can be taken such that $\lim_{s\to \infty} \eta(s,d,A)^{1/s} = 1$.
\end{customthm}
\begin{zamech}
    The proof of the claim about $\eta(s,d,A)$ can be found in the first reference \cite{reznikovCovering2018}, where it suffices to observe that $ \mathbb R^p$ is a convex set without boundary. 
\end{zamech}
\begin{proof}[Proof of Theorem~\ref{thm:covering_polarized}.]
In this proof, constants $c_1, c_2, c_3$ can depend only on $p$. For brevity, we write
$$
\rho:=\rho^*(A,N), \qquad  \eta:=\eta(s,d,A)
$$
for the optimal $ N $-point covering radius of set $ A $ and the constant $ \eta $ from Theorem~\ref{thm:weak_sep}, respectively.
Let a point $y\in A$ be such that $\rho=\min_j |y-x_j|$. For an integer $n\geqslant 2$, set
$$
H_n:=\B{ n\rho}{y}\setminus \B{(n-1)\rho}y.
$$
Notice that the choice of $y$ implies that $\B{ \rho}{y} \cap \omega_N=\emptyset$, and thus
$
\omega_N \subset \bigcup_{n=2}^\infty H_n.
$
Furthermore, 
$$\L_p(H_n) = v_pn^p\rho^p - v_p(n-1)^p\rho^p \leqslant c_1 v_p n^{p-1} \rho^{p}. $$

We can cover $H_n$ with balls of radius $\eta N^{-1/d}$; a single ball of this radius has volume $ v_p (\eta N^{-1/d})^p$, so at most 
$
c_2\eta^{-p} n^{p-1} \rho^p N^{p/d}
$
such balls are required to cover $ H_n $. In addition, the weak separation property implies that each ball contains no more than $p$ points from $\omega_N$, so that for $n\geqslant 2$ we have
$
\#(H_n \cap \omega_N)\leqslant c_3 \eta^{-p} n^{p-1} \rho^p N^{p/d}.
$
This results in
\[
\begin{aligned}
    P_s(\omega_N,A) 
    &\leqslant \sli_{j=1}^N \frac{1}{|y-x_j|^s} = \sli_{n=2}^\infty \sli_{x_j \in H_n} \frac{1}{|y-x_j|^s} \leqslant \rho^{-s} \sli_{n=2}^\infty (n-1)^{-s} \cdot \#(H_n \cap \omega_N)  \\
    &\leq c_3 \eta^{-p} \rho^{p-s} N^{p/d} \sli_{n=2}^\infty (n-1)^{-s} n^{p-1},
\end{aligned}
\]
where it is used that for $x\in H_n$ we have $|y-x|\geqslant (n-1)\rho$.
The series in the right-hand side converges in view of the assumption $s>p$. 

Notice that
$$
\lim_{s\to \infty} \left(\sli_{n=2}^\infty (n-1)^{-s} n^p\right)^{1/s}=1,
$$
so for the constant
$$
c(s,d,p,A):=c_3 \eta^{-p} \cdot \sli_{n=2}^\infty (n-1)^{-s} n^p
$$
we have $\lim_{s\to \infty} c(s,d,p,A)^{1/s}=1$, and
$$
P(A, \omega_N) \leqslant c(s,d,p,A) \rho^{p-s} N^{p/d}.
$$
Solving for $ \rho $ while taking into account $s>p$ 
$$
\rho \leqslant c(s,d,p,A)^{\frac{1}{s-p}} N^{\frac{p}{d(s-p)}} P(A, \omega_N)^{\frac{1}{p-s}}.
$$
It remains to denote $ C:=c(s,d,p,A)^{\frac{1}{s-p}},$ and the proof is complete.
\end{proof}
\begin{proof}[Proof of Corollary~\ref{cor:limiting_polar}]
    To obtain equalities \eqref{sasha_limits1}--\eqref{sasha_limits2}, we establish the inequalities in both directions.
    Let $\omega_N^*$ be a configuration on which $\rho^*(A, N)$ is attained. Then 
    $$
    \P_s^*(A, N)\geqslant P_s(A, \omega_N^*) = \min_{y\in A} \sli_{j=1}^N \frac{1}{|y-x_j|^s} \geqslant \left(\frac{1}{\rho^*(A, N)}\right)^s,
    $$
    implying immediately \eqref{sasha_limits1}--\eqref{sasha_limits2} with ``$ \geq $'' in place of equality.

    To obtain the converse direction, observe that Theorem~\ref{thm:covering_polarized} implies for $s>p$,
    $$
    \left(\frac{\P_s^*(A, N)}{N^{s/d}}\right)^{1/(s-p)} \leqslant \frac{1}{C(s,d,p,A)}\cdot \frac{1}{\rho^*(A, N) N^{1/d}}.
    $$
    It remains to recall that $C(s,d,p,A)\to 1$ as $s\to \infty$ to conclude that \eqref{sasha_limits1}--\eqref{sasha_limits2} hold with ``$ \leq $''.
\end{proof}

\subsection{Polarization on fractal sets}
To prove Theorems~\ref{sasha_fractal_indep}--\ref{sasha_nonex} we need the following well-known result \cite{lalleyPacking1988,lalleydiscrete,feller}. 
\begin{customthm}{C}[Continuous renewal theorem]
    \label{sasha_renewal_cont}
Let $\mu$ be a probability measure on $[0, \infty)$ and $Z(u)$ be a function defined on $[0,\infty)$. Assume that for some positive constants $C$ and $\ep$ and  $u$ sufficiently large there holds
$$
\left| Z(u) - \ili_0^u Z(u-x)\, d\mu(x) \right| \leqslant C e^{-\ep u}.
$$
Then $ \lim_{u\to \infty} Z(u) $ exists. 
\end{customthm}
\begin{remark}
    In a general formulation of this theorem, the exponential on the right can be replaced with any noninreasing function from $ L^1([0,+\infty), \mathcal L_1) $.
\end{remark}
Observe that $ \mathcal P_s^*(A,N) $ is nondecreasing in $ N $. For $t>0$, we define
$$
N(t):=\min\{N\colon \PP_s^*(A, N) \geqslant t\},
$$
a generalized inverse function to $ \mathcal P_s^* $. Clearly existence of the limit
$
\lim_{t\to \infty} {N(t)}{t^{-d/s}}
$
is equivalent to that of
$
\lim_{N\to \infty} {\PP_s^*(A, N)}/{N^{-s/d}}.
$
\begin{proof}[Proof of Theorem~\ref{sasha_fractal_indep}]
    We cannot apply Theorem~\ref{sasha_renewal_cont} directly to the function $N(t)\cdot t^{-d/s}$; it will be necessary to introduce a change of variables first.
    Consider the quantity
    \[
        Z(u) :=  \left(e^u\right)^{-d/s} N(e^u).
    \]
    In order to apply the renewal theorem to $ Z $, it suffices to show that
    \begin{equation}
        \label{eq:Z_renewal}
            Z(u) = z(u) + \int_0^u Z(u-x)\, d\mu(x),
    \end{equation}
    where $ |z(u)| \leq C e^{-\ep u} $ and $ \mu $ places the weight $ r_m^d $ in $ -s\log r_m $:
    \[
        \mu = \sum_{m=1}^M r_m^d\,\delta_{-s\log r_m}.
    \]
    With such $ \mu $, equation~\eqref{eq:Z_renewal} is equivalent to
    \[
        \begin{aligned}
            \left(e^u\right)^{-d/s} N(e^u) & = z(u) + \sum_{m=1}^M Z(u + s\log r_m)r_m^d\\
                                           & = z(u) + \sum\left(e^u r_m^s\right)^{-d/s} N(e^u r_m^s)r_m^d\\
                                           & = z(u) + (e^u)^{-d/s} \sum_{m=1}^M N(e^u r_m^s),
        \end{aligned}
    \]
    which after setting $ L(e^u) := \left(e^u\right)^{d/s} z(u) $ and $ t= e^u $ becomes
    \begin{equation}
        \label{eq:Nrenewal}
        N(t) = L(t) + \sum_{m=1}^M N(tr_m^s).
    \end{equation}
    Hence, to apply Theorem~\ref{sasha_renewal_cont}, it suffices to show that in \eqref{eq:Nrenewal}, $  |L(t)| \leq Ct^{d/s-\epsilon} $ for sufficiently large $ t $. The rest of the proof establishes the two bounds on $ L(t) $ giving this estimate.

    \medskip

    \noindent{\bf Upper bound.}
    Denote $ N_m = N(tr_m^s)$, $ 1\leq m \leq M $, and
    let $ N = \sum_m N_m $. By definition, $ \P_s^*(A, N_m) \geq tr_m^s, $  $ 1\leq m \leq M $, so that for configurations $ \omega_{N_m}^* $ attaining $ \P_s^*(N_m) $ one has $P_s(\omega_{N_m}^*, A) \geq tr_m^s$. Furthermore, due to the nonnegativity and scale-invariance of the polarization functional there holds 
    \[
        \P_s^*(A,N) \geq P_s\left(\bigcup_m \psi_m(\omega_{N_m}^*),A\right) \geq \min_m \left\{ r_m^{-s}\P_s^*(A, N_m) \right\} \geq t.
    \]
    Thus, $N =  \sum_m N(tr_m^s)  \geq N(t)$, giving
    \[
        L(t) = N(t) - \sum_{m=1}^M N(tr_m^s)  \leq 0.
    \]
    {\bf Lower bound.} 
Let $ N = N(t) $ for some $ t>0 $, so by definition $ \P_s^*(A,N) \geq t $. Let $ A_m:= \psi_m(A) $ for $ 1\leq m \leq M $. Because $ A $ satisfies the open set condition, there exists an $ h>0 $, such that $ \dist(A_l, A_k) > 2h $, $ l\neq k$. Consider an optimal polarizing configuration $ \omega_N^* $; let 
$$ N_m:= \#[\omega_N^* \cap \B h{A_m}]$$
be the number of points from this configuration that lie within $ h $ from $ A_m $,  $ 1\leq m \leq M $. Observe that the contribution of points in $ \omega_N^* \setminus \B h{A_1} $ to the values of $ U(y,\omega_N^*) = \sli_{x\in \omega_N^*}\|y-x\|^{-s} $ with $ y \in A_1 $ is at most $ h^{-s} N $, implying
    \[
        h^{-s}N + \min_{y\in A_1} \sum_{x\in \omega_N^*\cap \B h{A_1}} | y - x|^{-s} \geq \P_s^*(A,N) \geq t.
    \]
    Since $ \omega_N^* \cap \B h{A_1} $ can be rescaled using $ \psi_1^{-1} $ and be used as a polarization configuration for the entire $ A $,
    \begin{equation*}
        \P_s^*(A, N_1) \geq P_s\left(\psi_1^{-1} [\omega_N^*\cap \B h{A_1}] \right) \geq r_1^s \left(t - cN\right)
    \end{equation*}
    for $ c = h^{-s} $. To complete the proof, observe that minimal polarization is superadditive: for any $ n_1,\,n_2 $,
    \[
        \P_s^*(A, n_1+n_2) \geq \P_s^*(A, n_1) + \P_s^*(A, n_2).
    \]
    Pick an $ \tilde N_1 := N(c r_1^s N) \leq C N^{d/s}  $. There holds 
    \[
        \P_s^*(A, N_1 + \tilde N_1 ) \geq \P_s^*(A, N_1) + \P_s^*(A, \tilde N_1) \geq r_1^s \left(t - cN\right) + cr_1^s N = r_1^s t,
    \] 
    which implies $ N( r_1^s t) \leq N_1 + \tilde N_1 \leq N_1 + CN^{d/s} $.

    The above argument applies to all $ A_m $, whereby we have $ N(t r_m^s) \leq N_m + CN^{d/s} $. Recall also that $ N \geq \sum_m N_m $. Combining the last two inequalities we finally have
    \[
        N(t) \geq N(tr_1^s) + N(tr_2^2) - CN^{d/s}.
    \]
    As $ N(t) \asymp t^{d/s} $ by \eqref{eq:asymp_bounds}, this gives $ L(t) \geq -CN^{d/s} \geq - Ct^{d^2/s^2} $; since $ d<s $, we thus obtain the desired lower bound for $ L(t) $, and are in the position to use the renewal theorem. This completes the proof.
\end{proof}

\begin{proof}[Proof of Theorem \ref{sasha_nonex}]
First observe that since the additive group
$$
\{t_1 \log(r_1) + \cdots + t_M\log(r_M)\colon t_1,\ldots,t_M\in \mathbb{Z}\}
$$
is not dense in $\R$, it must have the form $ a\mathbb Z $ for some $ a > 0 $. As a result, there exists a number $r\in (0,1)$ and positive integers $i_1, \ldots, i_M$ such that
$$
r_m = r^{i_m}, \quad m=1,\ldots,M.
$$
Without loss of generality, assume $i_1 \geqslant i_2 \geqslant \ldots \geqslant i_m$.
Similarly to the proof of Theorem~\ref{sasha_fractal_indep}, we set
$$
N(t):=\min\{N\colon \rho(A, N) \leq t\},
$$
and consider the sequence
$$
R_n:=N(r^n), \qquad n\geq 1.
$$
Let  $J$ be an integer for which $ 2 r^J < \min_{l\neq k} \dist(A_l,A_k) $ with $ A_m = \psi_m(A) $. We will show that for some constant $C>1$ and every $n\geqslant J$, there holds
\begin{equation}
    \label{sasha_main_nonex}
    \rho(A, R_n-1) \geqslant C r^n.  
\end{equation}
Since $ \rho(A, R_n) \leq r^n $ by definition, this will be sufficient to justify the inexistence of limits claimed in the theorem.
The first step towards \eqref{sasha_main_nonex} is the renewal equation for covering:
\begin{equation}
    \label{eq:renewal_covering}
    R_{n_0} = N(r^{n_0}) = \sli_{m=1}^m R_{n_0-i_m} = \sli_{m=1}^M N(r^{n_0-i_m}).
\end{equation}
Indeed, for a configuration $ \omega_N^* $ attaining the covering radius $ r^{n_0} $, assume a point $ y $ maximizes the distance to $ \omega_N^* $ in $ A $, 
\[
    \|y-x_j\| = \dist(y,\omega_N^*) = r^{n_0}.
\]
It follows that $ y $ and $ x_j $ must belong to the same set $ A_m $, since $r^{n_0} < \dist(A_l, A_k) $, $ l\neq k $. Hence, covering radius in each of $ A_m $ is at most $ r^{n_0} $, implying $ \#(\omega_N^*\cap A_m) \geq N(r^{n_0-i_m})  $, $ 1\leq m \leq M $. On the other hand, due to the minimality of $ N(t) $, the converse inequality also holds, giving \eqref{eq:renewal_covering}.

To prove \eqref{sasha_main_nonex}, observe that by definition of $ N(t) $,
$$
\rho(A, R_{J+n}-1) > r^{J+n}, \qquad n \geq 1,
$$
and we emphasize that this inequality is strict.  Let 
$$
C:= \min \left\{2,\  \min_{n=0, \ldots, i_1} r^{-n-J} \rho(A, R_{J+n}-1)\right\} > 1.
$$
By construction, \eqref{sasha_main_nonex} is satisfied for $n=J,\ldots, J+i_1$. We proceed to obtain~\eqref{sasha_main_nonex} by induction. 

Assume that $n_0>J+i_1$, and the estimate \eqref{sasha_main_nonex} is known for every $n=J, \ldots, n_0-1$. 
Take a configuration $\omega^*$ of cardinality $ {R_{n_0}-1} $ that is optimal for $\rho(A, R_{n_0}-1)$. By~\eqref{eq:renewal_covering}, one of the sets $ A_m $ contains at most $ R_{n_0-i_m}-1 $ elements of $ \omega^* $; suppose this is the case for $ m=j $. The induction hypothesis applied to $ \omega^* \cap A_j$ shows that
\[
    \rho(A_j, R_{n_0-i_j}-1) \geq r_j \cdot Cr^{n_0-i_j} = Cr^{n_0}.
\]
Also, the elements of $ \omega^* \setminus A_j $ do not contribute to covering on $ A_j $ as $ \dist(A_l,A_k) > 2 r^J \geq Cr^{n_0}  $, implying
\[
    \rho(A, R_{n_0}-1) \geq Cr^{n_0},
\]
which completes the proof of both \eqref{sasha_main_nonex} and the first claim of the theorem. Proof for the unconstrained covering is obtained with minor adjustments in the argument, by using $ \B h{A_m} $ in place of $ A_m $, with $ 2h < \min_{l\neq k} \dist(A_k, A_l) $. Finally, the nonexistence of polarization limits for large $ s $ is a consequence of the above results for covering and Theorem~\ref{thm:covering_polarized}.
\end{proof}

\bibliographystyle{acm}
\bibliography{refs_polarization}

\vskip\baselineskip

{\footnotesize
    {\sc Department of Mathematics, Florida State University, Tallahassee, FL 32306}

    \smallskip
    {\it Email:} {\tt \{aanderso, reznikov, vlasiuk, ewhite\}@math.fsu.edu}

    \vspace{1cm}

    \indent{\sc Current address: Department of Mathematics, Vanderbilt University, Nashville, TN, 37240}

    {\it Email:} {\tt oleksandr.vlasiuk@vanderbilt.edu}
}
\end{document}